\definecolor{darkgreen}{rgb}{0,0.5,0}
\definecolor{darkred}{rgb}{0.7,0,0}
\theoremstyle{plain}
\newtheorem{lemma}{Lemma}[section]
\newtheorem{thm}[lemma]{Theorem}
\newtheorem{prop}[lemma]{Proposition}
\newtheorem{cor}[lemma]{Corollary}
\newtheorem{remark}[lemma]{Remark}
\theoremstyle{definition}
\newtheorem{defn}[lemma]{Definition}
\newtheorem{rmk}[lemma]{Remark}
\newtheorem{Prop}[lemma]{Proposition}
\numberwithin{equation}{section}
\newcommand{\de}{\delta}
\newcommand{\Om}{\Omega}
\renewcommand{\th}{\theta}
\newcommand{\Th}{\Theta}
\newcommand{\R}{\ensuremath{{\mathbb R}}}
\newcommand{\N}{\ensuremath{{\mathbb N}}}
\newcommand{\beq}{\begin{equation}}
\newcommand{\eeq}{\end{equation}}
\newcommand{\beqs}{\begin{equation*}}
\newcommand{\eeqs}{\end{equation*}}
\newcommand{\beqa}{\begin{equation}\begin{aligned}}
\newcommand{\eeqa}{\end{aligned}\end{equation}}
\newcommand{\beqas}{\begin{equation*}\begin{aligned}}
\newcommand{\eeqas}{\end{aligned}\end{equation*}}
\newcommand{\brmk}{\begin{rmk}}
\newcommand{\ermk}{\end{rmk}}
\newcommand{\partref}[1]{\hbox{(\csname @roman\endcsname{\ref{#1}})}}
\newcommand*\ddt{\frac{\mathrm{d}}{\mathrm{d}t}}
\newcommand{\pt}{\partial_t}
\newcommand{\M}{\ensuremath{{\mathcal M}}_{-1}}
\newcommand{\A}{\ensuremath{{\mathcal A}}}
\newcommand{\D}{\ensuremath{{\mathcal D}}}
\newcommand{\dxi}{\partial_{x_i}}
\newcommand{\dxj}{\partial_{x_j}}
\newcommand{\dds}{\frac{d}{ds}}
\newcommand{\norm}[1]{\Vert#1\Vert} 
\newcommand{\abs}[1]{\left\vert#1\right\vert} 
\newcommand{\seminorm}[1]{[#1]} 
\newcommand{\eps}{\varepsilon}
\newcommand{\na}{\nabla}
\newcommand*\tr{\mathop{\mathrm{tr}}\nolimits}
\begin{document}
\title[Flowing to minimal surfaces: Existence and uniqueness]{Flowing maps to minimal surfaces: Existence and uniqueness of solutions} 
\author{Melanie Rupflin}
\date{\today}


\begin{abstract}
We study the new geometric flow that was introduced in \cite{R-T} that evolves a pair of map and (domain) metric in such a way that it changes appropriate initial data into branched minimal immersions. 
In the present paper we focus on the existence theory as well as the issue of uniqueness of solutions. We establish that a (weak) solution exists for as long as the metrics remain in a bounded region of
moduli space, i.e.~as long as the flow 
does not collapse a closed geodesic in the domain manifold to a point. Furthermore, we prove that this solution is unique in the class of all weak solutions with non-increasing energy. This work complements the paper \cite{R-T} of Topping and 
the author where the flow was introduced and
its asymptotic convergence to branched minimal immersions is discussed. 
\end{abstract}
\maketitle
\section{Introduction}
Let $M$ be a smooth closed orientable surface and let $(N,G_N)$ be a (fixed) closed smooth Riemannian manifold of arbitrary dimension 
that we view as being isometrically immersed in $\R^K$ for some $K\in \N$. 

For $g$ a Riemannian metric on $M$ and a map $u:(M,g)\to (N,G_N)$ the Dirichlet energy is defined as
\beqs
E(u,g):=\frac12\int_M |du|^2\,dv_g.
\eeqs 
We remark that $(u,g)$ is a critical point of $E$ if and only if $u$ is harmonic and weakly conformal, i.e.~a branched minimal immersion or a constant map. 
In the present paper we establish the existence theory for the natural gradient flow of $E$ (considered as a function of both the map and the domain metric) which was introduced in \cite{R-T}. 
We refer to this joint paper of Topping and the author for the construction and the geometric background of this flow, but for convenience here 
recall the main points that led to the definition in \cite{R-T}. 

We consider the negative gradient flow of $E$ considered as a function of both the map and the domain metric, but taking into account the symmetries of $E$, that is the invariance under conformal variations of the domain as well as under the pull-back by diffeomorphisms applied simultaneously to the metric and the map 
component. That is we consider $E$ and its gradient flow on the set 
$$\A=\{[(u,g)];\, g\in \mathcal{M}_c, u\in C^\infty(M,N)\}$$ of equivalence classes where we identify  
$(u,g)\sim (u\circ f,f^*g)$ for smooth diffeomorphisms $f:M\to M$ homotopic to the identity. 
Here $\mathcal{M}_c$ stands for the set of smooth metrics of constant 
(Gauss-)curvature $c=1,0,-1$ for surfaces of genus $\gamma=0,1$ respectively $\gamma\geq 2$, with unit area in case $\gamma=1$. 

The tangent space of $\mathcal{M}_c$ splits orthogonally into a horizontal part consisting of the real parts of holomorphic quadratic differentials and a vertical part along the 
fibers of the action of diffeomorphisms on $\mathcal{M}_c$, i.e.~the space of Lie-derivatives of the metric, compare Lemma \ref{lemma:horiz} below. 
This canonical splitting allows us in \cite{R-T} to represent solutions of the $L^2$-negative gradient flow of $E$ on $\A$ by the solutions of the system 
\begin{subequations}\label{1}
  \begin{align}
       \pt u&=\tau_g(u) \label{1a}\\
     \frac{d g}{dt}& =\frac{\eta^2}4\text{Re}(P^H_g(\Phi(u,g))).\label{1b}
  \end{align}
\end{subequations}

Here $\tau_g(u)=\text{tr}_g(\na_g(du))=\Delta_g u+A_g(u)(\na u,\na u)$, $A$ the second fundamental form of $N\hookrightarrow \R^K$,
denotes the tension field of $u:(M,g)\to (N,G_N)$ and $\Phi(u,g)$ stands for the Hopf-differential, i.e.~the quadratic differential given in conformal coordinates $z=x+iy$ of $(M,g)$ 
as $\Phi(u,g)=\phi dz^2$ for $\phi=\abs{u_x}^2-\abs{u_y}^2-2i\langle u_x,u_y\rangle$. Furthermore 
$P_g^H$ denotes the $L^2$-orthogonal projection from the space of quadratic differentials onto the finite dimensional subspace of \emph{holomorphic} quadratic differentials on $(M,g)$. 
Finally $\eta>0$ is a free coupling constant related to the choice of $L^2$-metric on $\A$. 

As the main result of this paper we prove the following existence and uniqueness theorem
\begin{thm}\label{thm:1}
To any given initial data $(u_0,g_0)\in  C^\infty(M,N)\times \mathcal{M}_c$ there exists a weak solution $(u,g)$ of \eqref{1} defined on a maximal interval $[0,T)$, $T\leq \infty$, that satisfies
the following properties
\begin{enumerate}
\item[(i)] The solution $(u,g)$ is smooth away from at most finitely many singular times $T_i\in(0,T)$ at which `harmonic spheres bubble off'. 
More precisely as $t\nearrow T_i$ energy concentrates at a finite number of points $S(T_i)\subset M$ and suitable rescalings of the maps $u(t)$ around points in $S(T_i)$ converge as 
$t\nearrow T_i$ to (a bubble-tree of) non-trivial harmonic maps from $\R^2\cup \{\infty\}\cong S^2$ to $N$. 
\item[(ii)] As $t\to T_i$ the maps $u(t)$ converge weakly in $H^1$ and smoothly away from the set $S(T_i)$ to a limit $u(T_i)\in H^1(M,N)$. 
Furthermore, the metrics $g(t)$ converge smoothly to an element $g(T)\in \mathcal{M}_c$; in fact, the flow of metrics is Lipschitz-continuous with respect to all $C^m$ metrics on $\mathcal{M}_c$
across singular times.
\item[(iii)] The energy $t\mapsto E(u(t),g(t))$ is non-increasing. 
\item[(iv)] The solution exists as long as the metrics do not degenerate in moduli space; i.e.~either $T=\infty$ or the length $\ell(g(t))$ of the shortest closed geodesic in $(M,g(t))$ converges to zero as 
$t\nearrow T$.
\end{enumerate} 
Furthermore, the solution is uniquely determined by its initial data in the class of all weak solutions with non-increasing energy. 
\end{thm}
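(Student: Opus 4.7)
The plan is to decouple the system \eqref{1}, exploiting that \eqref{1a} is a harmonic map heat flow on a slowly varying domain, while \eqref{1b} is essentially a finite-dimensional ODE since $P_g^H$ projects onto the finite-dimensional space of holomorphic quadratic differentials on $(M,g)$. The horizontal splitting of $T\mathcal{M}_c$ mentioned in the introduction is what prevents the usual loss of parabolicity arising from the diffeomorphism invariance of $E$.

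\textbf{Short-time existence.} I would use a Banach fixed-point argument. On a short interval $[0,\tau]$, start with a continuous path of metrics $\tilde g(\cdot)$ near $g_0$, solve \eqref{1a} with domain metric $\tilde g(t)$ by standard linear parabolic theory, and produce a new $g(\cdot)$ by integrating \eqref{1b} with Hopf differential $\Phi(u,\tilde g)$ projected through $P^H_{\tilde g}$. In a precompact region of moduli space, $g\mapsto P_g^H$ depends smoothly on $g$ while $\Phi$ depends continuously on $u$ in $H^2$, yielding a contraction for $\tau$ small.

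\textbf{Regularity, bubbling, extension.} Property (iii) holds along smooth solutions because the flow is the $L^2$-gradient flow of $E$ on $\A$ for an appropriate metric, so $\ddt E$ equals the (sign-definite) negative of the standard dissipation terms. The energy bound combined with Struwe's $\eps$-regularity for the harmonic map heat flow on surfaces yields smoothness except at finitely many singular times $T_i$, at which a bubble-tree analysis adapted to the time-dependent but slowly varying domain metric gives (i). Weak $H^1$-convergence of $u(t)$ at $T_i$ is then standard. The metric flow remains smooth across $T_i$ because the right-hand side of \eqref{1b} is controlled in every $C^m$-norm by the $L^1$-norm of $\Phi$---thanks to finite-dimensionality of holomorphic quadratic differentials and equivalence of norms thereon---which is in turn bounded by the energy; integration in time produces the Lipschitz-continuity claimed in (ii). The flow is then restarted past $T_i$ from the $H^1$ datum $u(T_i)$ using Struwe's extension theory, and (iv) follows by a contradiction argument: if $\ell(g(t))$ stays bounded below as $t\nearrow T<\infty$, then $g(t)$ remains in a compact region of $\mathcal{M}_c$, so the uniform short-time existence window can be iterated to extend the solution past $T$.

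\textbf{Uniqueness.} This will be the main obstacle, since the two solutions $(u_i,g_i)$ are assumed merely weak with non-increasing energy. I would split the argument into control of $g$ and of $u$. For \eqref{1b}, the map $g\mapsto P_g^H(\Phi(u,g))$ is locally Lipschitz on bounded subsets of $\mathcal{M}_c$ in $u$ and $g$, so Gronwall gives
\beqs
\norm{g_1(t)-g_2(t)} \leq C\int_0^t \norm{u_1-u_2}_{H^1}(s)\,ds.
\eeqs
For \eqref{1a}, a suitable adaptation of Freire's uniqueness theorem for weak solutions of the harmonic map heat flow on surfaces with non-increasing energy, applied after treating $g_2$ as a perturbation of $g_1$, produces a comparable estimate for $\norm{u_1(t)-u_2(t)}_{L^2}$ in terms of $\int_0^t (\norm{u_1-u_2}_{H^1}+\norm{g_1-g_2})\,ds$. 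Closing the two Gronwall inequalities against one another forces $u_1\equiv u_2$ and $g_1\equiv g_2$ on some maximal interval, and uniqueness is then propagated past the singular times $T_i$ by verifying that the energy concentration sets $S(T_i)$ and the weak $H^1$-limits $u(T_i)$ must coincide for the two solutions (otherwise the quantified concentration pattern would be inconsistent with identical dynamics up to $T_i$), allowing the argument to restart from identical data.
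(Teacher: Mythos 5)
Your overall architecture matches the paper's (estimates on horizontal curves and on $P_g$ in compact regions of moduli space, a fixed-point short-time existence argument, Struwe-type $\eps$-regularity and bubbling for the map, Freire-style uniqueness), so the global strategy is sound. The gap is in how you propose to close the uniqueness argument, and it traces back to a technical point you only gesture at.

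You write that ``the map $g\mapsto P_g^H(\Phi(u,g))$ is locally Lipschitz on bounded subsets of $\mathcal{M}_c$ in $u$ and $g$'' and deduce $\|g_1(t)-g_2(t)\|\leq C\int_0^t\|u_1-u_2\|_{H^1}\,ds$. But the Hopf differential is quadratic in $\nabla u$, so $k(u_1,g_1)-k(u_2,g_2)$ contains a term of the form $\nabla w\cdot\nabla V$ with $w=u_1-u_2$ and $\nabla V=\max(|\nabla u_1|,|\nabla u_2|)$. If the only thing you know about $P_g$ is that it is bounded (or Lipschitz in $g$) from $L^2$ tensors to $H^s$, you would need $\|\nabla w\cdot\nabla V\|_{L^2}\leq\|\nabla w\|_{L^4}\|\nabla V\|_{L^4}$ on the right-hand side, and $\|\nabla w\|_{L^4}$ is not controlled by the quantities your energy estimate for $w$ produces. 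What makes the estimate close is the finite-dimensionality of $H(g)$, which gives the much stronger statement that $P_g$ is Lipschitz as a map from $L^1$ tensors to $H^s$ (the paper's Lemma \ref{lemma:L1}); then $\|\nabla w\cdot\nabla V\|_{L^1}\leq\|\nabla w\|_{L^2}\|\nabla V\|_{L^2}\leq C(E_0)\|\nabla w\|_{L^2}$ and the Gronwall inequality for $d(g_1,g_2)$ has the right form. You do invoke ``finite-dimensionality of holomorphic quadratic differentials and equivalence of norms'' when discussing the metric's Lipschitz-continuity across singular times, but this observation is at least as essential in the uniqueness proof, and as stated your uniqueness sketch does not have the ingredients to close.

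A second, related point: as written, your $u$-estimate reads $\|u_1(t)-u_2(t)\|_{L^2}\lesssim\int_0^t(\|u_1-u_2\|_{H^1}+\|g_1-g_2\|)\,ds$, which has a weaker norm on the left than on the right and is not a Gronwall inequality that can simply be ``closed against'' the $g$-estimate. The correct mechanism, as in Freire and Struwe, is that the differential inequality for $\frac12\ddt\|w\|_{L^2}^2$ carries the dissipation term $\|\nabla w\|_{L^2}^2$ on the left; after inserting the $g$-estimate (which, via Cauchy--Schwarz, contributes $t\int_0^t\int|\nabla w|^2$), the accumulated gradient term is absorbed for $t$ small, leaving a genuine Gronwall inequality in $\sup\|w\|_{L^2}^2$. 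This absorption step and the preliminary observation that on almost every time slice $u_i(t)$ is an almost-harmonic map in $H^2$ (which supplies the $\nabla u_i\in L^4$ hypothesis needed for the energy computation) should be made explicit; without them the proposed ``closing'' is not justified.

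A minor further point on existence: producing a fixed point on $\M^s$ gives a solution with regularity tied to the chosen $s$, and one still has to argue that the maximal existence time does not depend on $s$ (so that the solution is genuinely smooth). The paper handles this with the fact that any $g\in\M^s$ is the pull-back of a smooth metric by an $H^{s+1}$-diffeomorphism, together with the diffeomorphism-invariance of \eqref{1}; your sketch does not address this.
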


\begin{defn}\label{def:weak}
We call $(u,g)\in H^1_{loc}(M\times[0,T),N)\times C^0([0,T),\M)$ a weak solution of \eqref{1} if 
$u$ solves \eqref{1a} in the sense of distributions 
and if $g$ is piecewise $C^1$ (viewed as a map from $[0,T)$ into the space of symmetric $(0,2)$ tensors equipped with any $C^k$ metric, $k\in\N$) 
and satisfies \eqref{1b} away from times where it is not differentiable.
\end{defn}

We remark that the assumption on the initial data in Theorem \ref{thm:1} can be weakened to $u_0\in H^1(M,N)$ with the resulting solution being smooth away from finitely many times, possibly including $T_1=0$.

On intervals where the obtained solution $(u,g)$ is smooth, the energy decays by
\beq \label{energy-identity}
\ddt E(u(t),g(t))=-\int_M \abs{\tau_g(u)}^2 dv_g-\frac{\eta^2}{16} \norm{Re[P_g^H(\Phi(u,g))]}_{L^2(M,g(t))}^2,
\eeq
so that if $T=\infty$, both the tension field as well and the holomorphic part of the Hopf-differential converge to zero as 
$t\to \infty$ suitably. In the joint paper \cite{R-T} of Topping and the author we indeed prove that if the metric does not degenerate even as $t\to \infty$, then the full Hopf-differential 
(sub)converges to zero, resulting in a limit that is both harmonic and weakly conformal and thus, if non-constant, a minimal immersion away from at most finitely many branch-points \cite{GOR}. More precisely, 
in \cite{R-T}, we prove

\begin{thm}[\cite{R-T}, Thm 1.4]
In the setting of Theorem \ref{thm:1}, if the length $\ell(g(t))$ of the shortest closed geodesic of $(M,g(t))$ is uniformly bounded below by a positive constant, 
then there exist a sequence of times $t_i\to\infty$ and  a sequence of orientation-preserving diffeomorphisms $f_i:M\to M$ such that 
$$f_i^*g(t_i)\to \bar g \,\text{ and } \,  u(t_i)\circ f_i \to \bar u$$
converge to a metric $\bar g\in \mathcal{M}_c$ and a branched minimal immersion $\bar u$ or a constant map. 
Here the convergence of metrics is smooth, while the maps converge weakly in $H^1(M,N)$ and strongly in $W^{1,p}_{loc}(M\backslash S)$ for any $p\in [1,\infty)$ away from a finite set of points where
energy concentrates. 
\end{thm}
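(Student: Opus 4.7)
\textbf{Proof proposal for Theorem (\cite{R-T}, Thm 1.4).}

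The starting point is the energy identity \eqref{energy-identity}. Since $E(u(\cdot),g(\cdot))$ is non-increasing and bounded below by $0$, integrating \eqref{energy-identity} over $[0,\infty)$ yields
\beqs
\int_0^\infty \Big(\norm{\tau_{g(t)}(u(t))}_{L^2(M,g(t))}^2 + \norm{\text{Re}\,P_{g(t)}^H(\Phi(u(t),g(t)))}_{L^2(M,g(t))}^2\Big)\,dt<\infty.
\eeqs
Hence I can pick a sequence $t_i\to\infty$ along which both $\tau_{g(t_i)}(u(t_i))$ and $P_{g(t_i)}^H(\Phi(u(t_i),g(t_i)))$ tend to zero in $L^2$. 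Because $E(u(t_i),g(t_i))\leqs E(u_0,g_0)$, the maps have a uniform energy bound.

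Next I apply the Mumford compactness theorem. The hypothesis $\ell(g(t))\geqs \ell_0>0$, together with the normalisation built into $\mathcal{M}_c$ (fixed constant curvature and, in the torus case, fixed area), implies that $\{g(t_i)\}$ lies in a set of metrics whose classes in moduli space form a precompact subset. Thus, up to passing to a subsequence, there exist orientation-preserving diffeomorphisms $f_i:M\to M$ (each homotopic to the identity, which we may assume by including a fixed diffeomorphism representing each class in the mapping class group) such that $\bar g_i:=f_i^*g(t_i)\to \bar g$ smoothly for some $\bar g\in\mathcal{M}_c$. By naturality of tension field and Hopf differential under pull-back, the maps $v_i:=u(t_i)\circ f_i:(M,\bar g_i)\to N$ satisfy
\beqs
\norm{\tau_{\bar g_i}(v_i)}_{L^2(M,\bar g_i)}\to 0,\qquad \norm{P_{\bar g_i}^H(\Phi(v_i,\bar g_i))}_{L^2(M,\bar g_i)}\to 0,
\eeqs
and $E(v_i,\bar g_i)=E(u(t_i),g(t_i))$ remains bounded.

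I then invoke the standard bubbling compactness theory for sequences of approximate harmonic maps from a surface (as developed by Sacks--Uhlenbeck and Struwe). Because $\bar g_i\to \bar g$ smoothly and the tension fields tend to $0$ in $L^2$, a subsequence of $\{v_i\}$ converges weakly in $H^1(M,N)$ and strongly in $W^{1,p}_{loc}(M\setminus S)$ for every $p\in[1,\infty)$, away from a finite concentration set $S\subset M$, to a limit $\bar u\in H^1(M,N)$ which is harmonic with respect to $\bar g$. This already yields the convergence statements claimed in the theorem.

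It remains to show that $\bar u$ is weakly conformal, which combined with harmonicity gives that $\bar u$ is either constant or a branched minimal immersion by \cite{GOR}. Here is where the control on the holomorphic projection is used. The Hopf differential $\Phi(v_i,\bar g_i)$ converges to $\Phi(\bar u,\bar g)$ in, say, $L^1_{loc}(M\setminus S)$ by the strong local convergence of $v_i$. Because $\bar u$ is harmonic, $\Phi(\bar u,\bar g)$ is a \emph{holomorphic} quadratic differential on $(M,\bar g)$, in particular smooth across $S$. On the other hand, the finite-dimensional projection $P_{\bar g_i}^H$ depends smoothly on the (converging) metric, and its kernel consists of quadratic differentials orthogonal to the finite-dimensional space of holomorphic ones; passing to the limit in the orthogonality relation that expresses $\text{Re}\,P_{\bar g_i}^H\Phi(v_i,\bar g_i)\to 0$ against an arbitrary test holomorphic quadratic differential on $(M,\bar g)$, together with the smoothness of $\Phi(\bar u,\bar g)$, forces $P_{\bar g}^H(\Phi(\bar u,\bar g))=0$. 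Since $\Phi(\bar u,\bar g)$ is itself holomorphic this means $\Phi(\bar u,\bar g)=0$, i.e.~$\bar u$ is weakly conformal, completing the proof.

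\textbf{Main obstacle.} The subtle step is the last one: ensuring that the smallness of the holomorphic projection of $\Phi(v_i,\bar g_i)$ really descends to the limit despite possible energy concentration at $S$ and despite the fact that the projection is defined with respect to the varying metrics $\bar g_i$. This requires using that holomorphic quadratic differentials cannot concentrate at points (they form a finite-dimensional space of smooth objects whose $L^2$ norm controls any $C^k$ norm on compact families of metrics), so that testing against a fixed smooth holomorphic differential on $(M,\bar g)$ and using local strong convergence of $v_i$ off $S$ is legitimate.
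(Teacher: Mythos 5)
First, a point of reference: the paper you are reading does not prove this theorem --- it is quoted from the companion paper \cite{R-T}, and the only indication of the method given here is the remark that, along a suitable sequence of times, the \emph{full} Hopf differential (sub)converges to zero. Your first three steps (the energy identity to select $t_i$ with $\norm{\tau_{g(t_i)}(u(t_i))}_{L^2}\to 0$ and $\norm{P^H_{g(t_i)}\Phi}_{L^2}\to 0$, Mumford compactness to obtain $f_i$ and $\bar g$, and the compactness theory for almost harmonic maps to obtain a harmonic limit $\bar u$ with the stated modes of convergence) match that outline and are fine.

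The gap is in your final step. From $\norm{P^H_{\bar g_i}(\Phi(v_i,\bar g_i))}_{L^2}\to 0$ you correctly get $\langle\Phi(v_i,\bar g_i),\Psi_i\rangle_{L^2(\bar g_i)}\to 0$ for holomorphic $\Psi_i\to\Psi$; but to conclude $\langle\Phi(\bar u,\bar g),\Psi\rangle=0$ you must pass to the limit in a pairing of a fixed smooth $\Psi$ against $\Phi(v_i,\bar g_i)$, which is only bounded in $L^1(M)$ and converges in $L^1_{loc}$ \emph{off} $S$. The obstruction is concentration of the $L^1$ mass of the Hopf differential itself at the points of $S$: it is pointwise dominated by twice the energy density, which by definition of $S$ does concentrate there, so the weak-$*$ limit of the measures $\phi_i\,dv$ may contain an atomic defect $\nu=\sum_{x\in S}\nu_x\delta_x$, and your identity only yields $\langle\Phi(\bar u,\bar g),\Psi\rangle+\sum_{x\in S}\nu_x\overline{\psi(x)}=0$. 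Since a point of $S$ need not be a common zero of all holomorphic quadratic differentials, this does not give $P^H_{\bar g}(\Phi(\bar u,\bar g))=0$. Your ``main obstacle'' remark addresses the wrong object --- it is not the test differential that threatens to concentrate, but the Hopf differential being tested. To close the gap you would need either the energy identity / no-neck analysis for almost harmonic maps (all bubbles are conformal harmonic spheres and the necks carry no energy, hence $\nu=0$), or --- as is actually done in \cite{R-T} --- a Poincar\'e-type inequality for quadratic differentials, $\norm{\Phi-P^H_g(\Phi)}_{L^1}\leq C\norm{\bar\partial\Phi}_{L^1}\leq C\norm{\tau_g(u)}_{L^2}E(u,g)^{1/2}$, uniform while $\ell(g)\geq\ell_0$, which shows the full Hopf differential tends to zero in $L^1(M)$; weak conformality of $\bar u$ then follows from the local convergence alone, with no concentration issue to resolve.
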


For suitable initial data, such as incompressible maps, a degeneration of metrics can be excluded so that the flow (sub)converges (up to reparametrisations)
to a branched minimal immersion. In \cite{R-T} we thus recover the 
well known results on the existence of branched minimal immersions with given action on the level of fundamental groups of Schoen-Yau \cite{Schoen-Yau} and Sacks-Uhlenbeck \cite{Sacks-Uhlenbeck}
with a flow approach. 

Solutions of \eqref{1} that degenerate in moduli space will be analysed in a forthcoming paper \cite{Rup-Top-Zhu} by Topping, Zhu and the author.

\begin{remark}
For surfaces of genus less than two the structure of the flow \eqref{1} is simplified considerably and the existence of solutions is known; for spheres the space 
of holomorphic quadratic differentials is trivial so \eqref{1} reduces to the harmonic map flow of Eells and Sampson \cite{Eells+Sampson}  for which existence of global weak solutions was 
proven in the seminal 
paper of Struwe \cite{Struwe85}. For maps from a surface of genus $1$ it is shown in \cite{R-T} that \eqref{1} agrees with a flow that was introduced and studied by 
Ding, Li and Liu in \cite{Ding-Li-Liu}. In this special case the flow of metrics is reduced to two scalar ODEs for parameters describing a global \emph{horizontal} submanifold of 
the space of metrics. 
Furthermore, the completeness of Teichm\"uller space prevents a degeneration of the metric at finite times, leading to the existence of global (weak) solutions for all initial data as obtained
in \cite{Ding-Li-Liu}.
\end{remark}

In this paper we thus focus on the analysis of the flow from general surfaces of genus $\gamma\geq 2$.

\textbf{Outline of the paper}\\
The paper consists of three main parts. In the first section we study the properties of \textit{horizontal curves}, i.e.~curves that move in the direction of the real part of holomorphic quadratic differentials. 
Using ideas from Teichm\"uller theory, we obtain strong estimates for all horizontal curves, and thus in particular for the metric component of the flow, under the sole 
condition that we stay away from the boundary of moduli space.

In the second section we prove the existence of solutions as claimed in Theorem \ref{thm:1}. First we obtain short-time existence of smooth solutions based on the properties of horizontal curves 
derived in the first section. 
In a second step we then analyse the possible finite time singularities of the flow. On the one hand, we prove that the only way for the metric component to become singular is by a degeneration in moduli space. On the other hand, we obtain that as long as the metric component remains regular, the behaviour of solutions to \eqref{1a}
is similar to the one of solutions of the harmonic map flow as described by Struwe in \cite{Struwe85}; the singularity is caused by the \textit{bubbling off} of harmonic spheres
and the flow can be continued past the singular time by a weak solution. 

Finally we consider the question of uniqueness. We show uniqueness not just for solutions of \eqref{1} satisfying properties (i)-(iii) of Theorem \ref{thm:1} but in the general class of weak solutions with non-increasing energy. 
This represents the analogue of the uniqueness results \cite{Freire2} and \cite{Freire} of Freire for the harmonic map flow.
\begin{rmk}\label{rmk:no-symmetries}
For general curves within $\M$, satisfying an $L^2$ bound on the velocity such as \eqref{energy-identity}, 
singularities can form without the metrics degenerating in moduli space. For the flow 
\beq \label{2} \pt u=\tau_g(u),\quad \frac{d g}{dt}=\frac{\eta^2}4 \text{Re}(\Phi(u,g)),\eeq
which we would obtain if we were to consider the gradient flow of $E$ without taking 
into account the symmetries, we thus would not have a characterisation of the maximal existence time of solutions as statement (iv) of Theorem \ref{thm:1}. 
For \eqref{2} we thus could not expect to obtain the global solutions needed to evolve pairs $(u,g)$ to critical points of the energy, i.e.~to branched minimal immersions,
even for incompressible initial data.
\end{rmk}

Acknowledgements: The author thanks Peter Topping for valuable discussions. This work was partially supported by The Leverhulme Trust. 

\section{Horizontal curves}\label{section:horizontal}
We consider general horizontal curves, that is curves moving in the direction 
$$\ddt g=\text{Re}(\Psi(t))$$
of holomorphic quadratic differentials $ \Psi(t)=\psi(t) dz^2$ on $(M,g(t))$, $z=z(t)$ a complex coordinate on $(M,g(t))$.
Key for the analysis of such curves is a good understanding of the dependence on the metric $g\in\M$ of the horizontal space 
$$H(g):=\{\text{Re}(\Phi):\ \Phi=\phi dz^2 \, \text{holomorphic quadratic differential on } (M,g)\}$$
and of the corresponding $L^2$-orthogonal projection $P_g^H$. 
What we essentially need is a quantified version of the idea that a smooth variation of the metric leads to a smooth variation of the complex structure, which in 
turn results in a smooth change of the space of holomorphic
quadratic differentials and of $P_g^H$.

We remark that there are several equivalent points of view that one can take to study horizontal tensors and curves as well as to study the flow \eqref{1}. Here we follow the differential geometrical 
approach to Teichm\"uller theory as presented in the book of Tromba \cite{Tromba}. 
We view the space of horizontal tensors 
as a subspace $H(g)$ of the space $Sym^2(M)$ of all real symmetric $(0,2)$ tensors of class $L^2$, 
with $H(g)$ characterised by 
\beqs H(g)=\{h\in Sym^2(M):\, \tr_g(h)=0\text{ and } \delta_g h=0 \},\eeqs
$\delta_g$ the divergence operator (induced by the Levi-Civit\`a connection $\na_g$).
We then consider the projection 
$$P_g:Sym^2(M)\to H(g)$$ 
that is orthogonal with respect to the $L^2(M,g)$-inner product
\beqs 
\langle k,h\rangle_{L^2(M,g)}:=\int_M g^{ij} g^{lm}k_{il}h_{jm} \,dv_g.\eeqs 
This projection $P_g$, for which we shall derive an explicit formula later on, is related to the projection $P_g^H$ from the space of quadratic differentials to the space of holomorphic quadratic differentials by
\beq \label{eq:PgH}P_g(Re(\psi dz^2))=Re(P_g^H(\psi dz^2))\eeq
for any quadratic differential $\psi dz^2$ on $(M,g)$.

We first remark that the set of hyperbolic metrics $\M$ with smooth coefficients (in the given coordinate charts) is not a manifold. 
On the other hand for any $s>3$ the set $\M^s$ of hyperbolic metrics with coefficients in the Sobolev space $H^s(M)$ 
is a smooth submanifold of the (half-)space of all $H^s$ metrics on $M$, see \cite{Tromba} Theorem 1.6.1. 

We shall think of the projection $P_g$ as a map from this Banach manifold $\M^s$ into the space $L(Sym^2(M),T\M^s$) of linear functions 
mapping symmetric $(0,2)$-tensors into the tangent bundle of $\M^s$ and prove that it is locally Lipschitz.

\begin{prop}
\label{Prop:S}
For any smooth hyperbolic metric $g_0\in \M$ and every $s>3$ there exists a neighbourhood $W$ of $g_0$ in the Banach manifold $\M^s$ and a constant  $C=C(g_0,s)<\infty$ such
that the following holds true:
For every tensor $k\in Sym^2(M)$ and every curve $g\in C^1([0,T),\M^s)$ contained in $W$ 
we have 
\beq \label{est:Pg1} \norm{P_{g}(k)}_{H^s}\leq C\cdot \norm{k}_{L^2(M,g)}\eeq
and 
\beq \label{est:Pg2} \norm{\ddt P_{g(t)}(k)}_{H^s}\leq C\cdot \norm{\ddt g(t)}_{H^s}\cdot \norm{k}_{L^2(M,g)}.\eeq
\end{prop}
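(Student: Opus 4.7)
The strategy is to build an explicit representation of $P_g$ using a basis of $H(g)$ that depends in a controlled way on $g$, then read off both \eqref{est:Pg1} and \eqref{est:Pg2} from this formula. Since $H(g)$ is finite-dimensional (real dimension $n=6\gamma-6$), once a basis with the right regularity is in hand the estimates become essentially algebraic.

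Construction of a Lipschitz-in-$g$ basis. Fix an $L^2(M,g_0)$-orthonormal basis $h_1^0,\dots,h_n^0$ of $H(g_0)$. For $g$ near $g_0$ in $\M^s$, I use the $L^2(M,g)$-orthogonal splitting $\mathrm{Sym}_0^2(M,g)=H(g)\oplus\mathrm{Im}(L_g)$ of trace-free symmetric $(0,2)$-tensors, where $L_g X := \mathcal{L}_X g - (\divs_g X)\,g$. Since $\gamma\geq 2$ forbids nontrivial conformal Killing fields, $L_g^{\ast}L_g$ is an invertible elliptic operator on $\mathfrak{X}(M)$, and projecting the $g$-trace-free part of $h_i^0$ onto $H(g)$ yields
\begin{equation*}
h_i(g) := (h_i^0)^{\mathrm{tf},g} - L_g\bigl(L_g^{\ast}L_g\bigr)^{-1} L_g^{\ast}(h_i^0)^{\mathrm{tf},g}.
\end{equation*}
Elliptic regularity combined with Lipschitz dependence of $(L_g^{\ast}L_g)^{-1}$ on $g\in\M^s$ (standard in Tromba's framework) gives $g\mapsto h_i(g)$ as a $C^1$ map into $H^s$-sections of $\mathrm{Sym}^2 M$; for $g$ sufficiently close to $g_0$, the $h_i(g)$ form a basis of $H(g)$.

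From the formula to the estimates. With $G_{ij}(g):=\langle h_i(g),h_j(g)\rangle_{L^2(M,g)}$ and $a(g):=G(g)^{-1}$ (invertible in a neighborhood since $G(g_0)=I$), the projection takes the form
\begin{equation*}
P_g(k) = \sum_{i,j} a_{ij}(g)\,\langle k, h_j(g)\rangle_{L^2(M,g)}\,h_i(g).
\end{equation*}
Estimate \eqref{est:Pg1} then follows by Cauchy-Schwarz on the inner product together with the uniform bounds $\|h_i(g)\|_{H^s},\,|a_{ij}(g)|\leq C$ on a neighborhood. For \eqref{est:Pg2}, differentiate in $t$: the four places where $t$-dependence enters ($a_{ij}$, $h_i$, $h_j$, and the $L^2(M,g)$-inner product through $g^{-1}$ and $dv_g$) each contribute a term linear in $\partial_t g$ whose $H^s$ operator norm is controlled by $C\|\partial_t g\|_{H^s}$ via the chain rule on $\M^s$. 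Summing yields \eqref{est:Pg2}.

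The only substantive difficulty is the $C^1$-in-$g$ regularity of $h_i(g)$ as an $H^s$-section, which ultimately rests on continuous-in-$g$ invertibility of $L_g^{\ast}L_g$ between the appropriate Sobolev spaces on $M$. This is standard elliptic theory with smoothly varying coefficients, set up rigorously via the Banach-manifold structure of $\M^s$; once it is in place, the two estimates in the proposition follow immediately from the explicit formula above.
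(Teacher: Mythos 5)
Your strategy -- build a Lipschitz-in-$g$ basis of $H(g)$ by projecting a fixed basis of $H(g_0)$, then read off both estimates from the finite-rank formula $P_g(k)=\sum a_{ij}(g)\langle k,h_j(g)\rangle_{L^2(M,g)}h_i(g)$ -- is in spirit the same route the paper ultimately takes (this is precisely Lemma~\ref{lemma:basis} there, which is however derived \emph{from} Proposition~\ref{Prop:S}). The structural gap in your version is that you try to run this argument directly for metrics varying in an $H^s$ neighbourhood, and there you lose a derivative exactly where the paper goes to considerable trouble to avoid it.

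Concretely: your $h_i(g)=(h_i^0)^{\mathrm{tf},g}-L_g(L_g^*L_g)^{-1}L_g^*(h_i^0)^{\mathrm{tf},g}$ contains the term $L_g X=\mathcal L_X g-(\divs_g X)\,g$ with $X\in H^{s+1}$. Since $\mathcal L_X g$ contains $X^k\partial_k g_{ij}$ and $g$ is only assumed to be of class $H^s$, this term a priori lives in $H^{s-1}$, not $H^s$. The same loss appears in $L_g^*(h_i^0)^{\mathrm{tf},g}$ and in the coefficients of $L_g^*L_g$. So your construction gives $\|h_i(g)\|_{H^{s-1}}\leq C$ and hence $\|P_g(k)\|_{H^{s-1}}\leq C\|k\|_{L^2}$; the $H^s$ bound you need is one order better and does \emph{not} follow from ``standard elliptic theory with smoothly varying coefficients'' on $\M^s$. (It is true that $H(g)\subset H^s$, but seeing this requires using the extra structure of holomorphic quadratic differentials and the $H^{s+1}$ regularity of isothermal coordinates for an $H^s$ metric; your formula as written does not produce it.)

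The paper handles exactly this point. It first proves the estimates for $g$ in a finite-dimensional \emph{slice} $S$ around the smooth $g_0$, whose elements are all smooth and, by Lemma~\ref{lemma:S}, satisfy uniform $H^{s+1}$ bounds; the paper states explicitly that the $H^{s+1}$ control on $g$ is what allows one to estimate the $H^s$ (not just $H^{s-1}$) norm of the Lie derivative $L_X g$. Only then does it transfer the estimate to the full $H^s$ neighbourhood $W$ via the slice theorem~\ref{thm:Tromba} and the equivariance $f^*P_g(k)=P_{f^*g}(f^*k)$, which is harmless because the diffeomorphism factor has $H^{s+1}$ regularity. Your proof omits both ingredients. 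Unless you supply an independent argument recovering the missing derivative (for instance by passing to isothermal coordinates, or by inserting the slice/pullback step), the estimate \eqref{est:Pg1} as stated is not proved, and the same loss propagates into \eqref{est:Pg2} through the $t$-derivatives of $h_i(g)$ and of the operator $L_{g(t)}$.
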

Here and in the following the Sobolev norms $\norm{\cdot}_{H^s}$ are to be computed in \textit{fixed} local coordinate charts of $M$.

Based on this local statement about the projection $P_g$, we then derive the following result for horizontal curves contained in compact regions of moduli space. 

\begin{prop}
\label{Prop:horizontal}
For every $\eps>0$ and every $s>3$ there exists a number $\th=\th(\eps,s)>0$ such that the following holds true. 
Let $g_0\in \M^s$ be any hyperbolic metric of class $H^s$ for which the length 
$\ell(g_0)$ of the shortest closed geodesic in $(M,g_0)$ is no less than $\eps$. Then there is a number $C=C(g_0,s)<\infty$ such that for any \textit{horizontal} curve 
$g\in C^1([0,T),\M^s)$ with $g(0)=g_0$ and of $L^2$-length 
$$L(g)=\int_0^T\norm{\ddt g(t)}_{L^2(M,g(t))} dt\leq \theta$$
we have
\beq
\label{est:Ck}
\norm{\frac{d}{dt} g(t)}_{H^s}\leq C\norm{\frac{d}{dt} g(t)}_{L^2(M,g(t))} \text{ for every } t\in [0,T).
\eeq
\end{prop}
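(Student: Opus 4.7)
The starting observation is that for a horizontal curve $g(t)$ the velocity $\frac{d}{dt}g(t)=\mathrm{Re}(\Psi(t))$ lies in $H(g(t))$ by definition, so $P_{g(t)}\bigl(\frac{d}{dt}g(t)\bigr)=\frac{d}{dt}g(t)$. Combining this identity with \eqref{est:Pg1} of Proposition \ref{Prop:S} immediately yields the desired bound $\norm{\frac{d}{dt}g(t)}_{H^s}\leq C\norm{\frac{d}{dt}g(t)}_{L^2(M,g(t))}$ whenever $g(t)$ lies in a neighbourhood $W$ of a smooth reference metric to which Proposition \ref{Prop:S} applies. The entire task is thus to ensure that the curve cannot leave such a neighbourhood while its accumulated $L^2$-length stays below the threshold $\theta$.

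For a given initial metric $g_0\in\M^s$, I would first pass to a smooth hyperbolic metric $\tilde g_0\in\M$ close to $g_0$ in $H^s$ and apply Proposition \ref{Prop:S} at $\tilde g_0$, producing a neighbourhood $W\subset\M^s$ containing $g_0$ and a constant $C_0=C_0(g_0,s)$. A standard continuity/bootstrap argument then closes the loop: on the maximal interval $[0,t^\ast)\subseteq[0,T)$ on which $g(s)\in W$ for all $s\in[0,t^\ast)$, the $H^s$--$L^2$ bound above holds, so integration gives
\[
\norm{g(t)-g_0}_{H^s}\leq \int_0^t\norm{\tfrac{d}{ds}g(s)}_{H^s}\,ds\leq C_0\int_0^t\norm{\tfrac{d}{ds}g(s)}_{L^2(M,g(s))}\,ds\leq C_0\,\theta,
\]
using that while $g(s)\in W$, the Sobolev embedding $H^s\embed L^\infty$ (for $s>3$) makes $g(s)$ and $g_0$ uniformly comparable as bilinear forms, so the $L^2(M,g(s))$ and $L^2(M,g_0)$ norms are uniformly equivalent. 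If $\theta$ is small enough that $C_0\theta$ is strictly less than the $H^s$-radius of $W$ around $g_0$, the curve cannot reach $\partial W$, which forces $t^\ast=T$ and proves \eqref{est:Ck}.

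The real difficulty is in promoting $\theta$ from a quantity possibly depending on $g_0$ to one depending only on the lower bound $\eps$ on $\ell(g_0)$. For this I would invoke Mumford's compactness theorem: the set of hyperbolic metrics with $\ell\geq\eps$ is compact in Teichm\"uller space, and hence, modulo the action of diffeomorphisms of $M$ homotopic to the identity, every such $g_0$ is $C^\infty$-close to one of finitely many smooth representatives $g_1,\ldots,g_N\in\M$. Since both the class of horizontal tensors and the $L^2$-length are invariant under pull-back by these diffeomorphisms (and the resulting change-of-coordinates factors, being $C^\infty$-bounded on a compact family, can be absorbed into the final metric-dependent constant $C$), the sizes of the Proposition \ref{Prop:S} neighbourhoods at $g_1,\ldots,g_N$ yield a \emph{uniform} lower bound for $\theta$ in terms of $\eps$ and $s$ alone. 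The only non-uniformity that remains is the constant $C=C(g_0,s)$, which is exactly what the statement permits; this is where I expect the main work to lie, in checking that the diffeomorphism equivariance is quantitative enough to make the finite cover argument close.
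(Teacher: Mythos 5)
Your proposal follows essentially the same route as the paper: start from the identity $\frac{d}{dt} g = P_{g}\big(\frac{d}{dt} g\big)$ valid for horizontal curves, combine it with Proposition~\ref{Prop:S} to get local $H^s$--vs--$L^2$ control; run a continuity/bootstrap argument so that a curve of small $L^2$-length cannot escape the neighbourhood $W$ from Proposition~\ref{Prop:S}; note diffeomorphism invariance (with the observation that the $C^\infty$ factors from the change of coordinates can be absorbed into the $g_0$-dependent constant $C$, which is exactly the paper's reason for allowing $C$ to depend on $g_0$); invoke Mumford compactness; and reduce the non-smooth case to the smooth one. These are all the ingredients the paper uses, in the same roles.

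There is, however, one genuine error that you need to fix. Mumford's compactness theorem gives compactness of $K_\eps=\{[g]:\ell(g)\geq\eps\}$ in \emph{moduli space} $\M/\D$, \emph{not} in Teichm\"uller space $\M/\D_0$. The set of marked hyperbolic structures with $\ell\geq\eps$ is not compact in Teichm\"uller space (it is invariant under the infinite mapping class group). Correspondingly, in your finite-cover step you must allow pull-back by \emph{arbitrary} diffeomorphisms in $\D$, not only those homotopic to the identity; since the constant $C$ is permitted to depend on $g_0$, this causes no harm, but the distinction has to be made. The paper is careful about exactly this point: it first proves that the function $\theta$ is invariant under pull-back by arbitrary $H^{s+1}$-diffeomorphisms and only then descends it to moduli space.

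Beyond that correction, the remaining difference is more of bookkeeping. The paper closes the compactness argument by proving that the induced function $\bar\theta$ on $\M/\D$ is \emph{Lipschitz} with respect to the Weil--Petersson metric; combined with positivity and the compactness of $K_\eps$, this gives a uniform lower bound. Your finite-cover argument by neighbourhoods around finitely many smooth representatives is morally the same, but it leaves the requisite (lower semi-)continuity of $\theta$ implicit; you would need to check, as the paper effectively does via the Weil--Petersson Lipschitz estimate, that $\theta$ cannot drop discontinuously as you move away from the chosen representatives, i.e.\ that the bootstrap works uniformly for all starting points in a moduli-space neighbourhood of each $[g_i]$. This is doable along the lines you sketch, but the paper's formulation is cleaner.
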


For tori the corresponding result is obtained as a consequence of the existence of a \textit{smooth global horizontal slice}, i.e.~of a finite 
dimensional smooth submanifold of $\mathcal{M}_0$, parametrised over Teichm\"uller space, whose tangent space at each point is horizontal and which thus contains all horizontal curves passing through $g_0$. 

While for surfaces of genus $\gamma\geq 2$ the space of horizontal tensors $H(g)$ is still finite dimensional, $\dim_\R(H(g))=6\gamma-6$ by the Riemann-Roch theorem, the distribution
$g\mapsto H(g)$ is no longer integrable, compare \cite{Tromba}, section 5.3, so Proposition \ref{Prop:horizontal} cannot be reduced to a statement about curves on a finite dimensional manifold.

\begin{proof}[Proof of Proposition \ref{Prop:S}] 

We prove Proposition \ref{Prop:S} in two steps; we show first that estimates of the form \eqref{est:Pg1} and \eqref{est:Pg2} hold true for metrics contained in a so called \textit{slice}
and then in a second step pull-back these estimates to give the claim of Proposition \ref{Prop:S} for general metrics in a neighbourhood of $g_0$. To do so we make use of ideas from Teichm\"uller theory as explained in the book of Tromba \cite{Tromba}, chapter 2.

So let $g_0\in \M$ be any given metric and let $s>3$ be fixed. 
Following \cite{Tromba} we define a small \textit{slice} around $g_0$ by
\beq \label{eq:slice}
S:=\{g=\rho(h)\cdot(g_0+h):\, h\in U\subset H(g_0)\}\subset \M
\eeq
for $U=U(g_0,s)$ a suitably small neighbourhood of $0\in H(g_0)$ chosen later on. Here the function $\rho(h):M\to \R$ is to be chosen such that $\rho(h)\cdot (g_0+h)$ has constant 
curvature $-1$ and is uniquely determined by this property according to Poincar\'e's theorem. 

The key feature of this finite dimensional submanifold of $\M^s$ is that it provides a local model of $\M^s/\D_0^{s+1}$, 
with $\mathcal{D}_0^{s+1}$ the set of $H^{s+1}$-diffeomorphisms that are homotopic to the identity

\begin{thm}[\cite{Tromba}, Thm 2.4.3]\label{thm:Tromba}
For any number $s>3$, any $g_0\in \M$ and $S=S(g_0,s)$ a sufficiently small slice around $g_0$, there are neighbourhoods $W \subset \M^s$ of $g_0$ and 
$V\subset \mathcal{D}_0^{s+1}$ of $id$ for which 
the map 
$$S\times V\ni (g,f)\mapsto f^*g\in W$$ 
is a diffeomorphism.
\end{thm}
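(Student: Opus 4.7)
My plan is to derive Theorem \ref{thm:Tromba} from an application of the inverse function theorem to the action map
\beqs
\Psi\colon S\times V \to \M^s, \qquad \Psi(g,f)=f^*g,
\eeqs
with $V$ a small neighbourhood of $\id$ in $\D_0^{s+1}$. First I would verify that $\Psi$ is well-defined and smooth between the relevant Banach manifolds. Since $s>3>\dim M/2=1$, $H^s(M)$ is a Banach algebra and composition by an $H^{s+1}$-diffeomorphism preserves $H^s$-regularity of tensors; smoothness of $\Psi$ as a map of Banach manifolds then follows from the standard $\omega$-lemma for Sobolev sections. The slice $S$ itself is a smooth finite-dimensional submanifold of $\M^s$ provided that the conformal factor $\rho(h)$ determined by Poincar\'e uniformisation depends smoothly on $h\in H(g_0)$; this is a separate implicit-function-theorem argument applied to the Liouville-type equation $K_{\rho(g_0+h)}=-1$, whose linearisation at $h=0$ is an invertible elliptic operator on functions over the hyperbolic surface $(M,g_0)$.

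The heart of the proof is the computation of the differential at $(g_0,\id)$. One reads off directly that
\beqs
D\Psi\big|_{(g_0,\id)}(\dot g, X)=\dot g + L_X g_0
\eeqs
for $(\dot g, X)\in T_{g_0}S\times H^{s+1}(TM)$. Differentiating the parametrisation $h\mapsto \rho(h)(g_0+h)$ of the slice at $h=0$ yields $\dot g = h + D\rho(0)(h)\cdot g_0$, and since horizontal $h$ have vanishing linearised Gauss curvature, $D\rho(0)\equiv 0$ and hence $T_{g_0}S=H(g_0)$. It then remains to invoke the $L^2$-orthogonal Berger--Ebin-type splitting
\beqs
T_{g_0}\M^s = H(g_0)\oplus \{L_X g_0:X\in H^{s+1}(TM)\},
\eeqs
which I would establish by solving, given $k\in T_{g_0}\M^s$, the elliptic equation $\delta_{g_0}(L_X g_0)=\delta_{g_0}k$ for $X$ and checking that the residual $k-L_X g_0$ is transverse-traceless (the trace constraint follows automatically from the linearised constant-curvature condition on $k$). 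Orthogonality is just integration by parts: $\langle h, L_X g_0\rangle_{L^2}=-2\langle \delta_{g_0}h, X\rangle_{L^2}=0$ for every $h\in H(g_0)$.

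With these identifications, $D\Psi|_{(g_0,\id)}$ is precisely the direct-sum map of the inclusions of the two summands, hence surjective. For injectivity I note that $L_X g_0=0$ forces $X$ to be a Killing field of $(M,g_0)$; since $M$ has genus $\gamma\geq 2$, every hyperbolic metric has finite isometry group, so its Lie algebra is trivial and $X=0$. Thus $D\Psi|_{(g_0,\id)}$ is a bounded linear bijection between Banach spaces and the open mapping theorem upgrades this to a topological isomorphism. The inverse function theorem on Banach manifolds then provides neighbourhoods $S'\subset S$ of $g_0$, $V\subset \D_0^{s+1}$ of $\id$ and $W\subset\M^s$ of $g_0$ such that $\Psi\colon S'\times V\to W$ is a diffeomorphism, which is the statement after shrinking the original neighbourhood $U$ in the definition of the slice so that $S$ corresponds to $S'$. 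The most delicate point I anticipate is the Sobolev bookkeeping required for the smoothness of $\Psi$: one must verify that no derivative is silently lost when composing $H^s$ tensors with $H^{s+1}$ diffeomorphisms, which is why one works with $s>3$ rather than any lower Sobolev exponent.
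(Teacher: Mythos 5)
The paper does not prove this statement: it is quoted from Tromba's book (Theorem 2.4.3) and the reader is referred there for the proof. Your argument is, in outline, exactly the standard Ebin--Palais/Tromba slice-theorem proof: apply the inverse function theorem to the action map $(g,f)\mapsto f^*g$, identify $T_{g_0}S$ with $H(g_0)$ via the vanishing of the linearised curvature on transverse--traceless tensors, and use the Berger--Ebin splitting $T_{g_0}\M^s=H(g_0)\oplus\{L_Xg_0\}$ together with the triviality of Killing fields in genus $\gamma\geq 2$ to conclude that the differential at $(g_0,\id)$ is an isomorphism. These steps are all sound; the splitting is precisely Lemma \ref{lemma:horiz} of the paper, and equation \eqref{eq:mu} confirms your observation that divergence-free together with vanishing linearised curvature forces trace-free.

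The one place where your write-up is too quick is the smoothness of $\Psi$. The $\omega$-lemma does \emph{not} give smoothness of the joint action $\D_0^{s+1}\times\M^s\to\M^s$: differentiating $g\circ f$ in the $f$-direction produces $dg(X)\circ f$, which costs a derivative, so that action is merely continuous (this is the classical obstruction to $\D^{s+1}$ being a Lie group). What rescues the argument is the hypothesis you did not actually use, namely that $g_0$ lies in $\M$ and not merely in $\M^s$: elliptic regularity for the equation determining the conformal factor $\rho(h)$, together with the finite-dimensionality of $H(g_0)$, shows that every metric in $S$ is $C^\infty$, and for a \emph{fixed smooth} metric $g$ the orbit map $f\mapsto f^*g$ is genuinely smooth on $\D_0^{s+1}$. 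Since $S$ is a finite-dimensional manifold of smooth metrics, $\Psi$ is then jointly smooth and the inverse function theorem applies. This is exactly why the theorem must be stated for smooth $g_0$ — a point the paper itself stresses immediately after quoting the result — so you should make it explicit rather than subsume it under ``Sobolev bookkeeping.''
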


For a proof of this theorem as well as for further insight into Teichm\"uller theory we refer to the book of Tromba \cite{Tromba}. We remark that the above result remains valid
if we replace the slice $S$ by a smaller slice defined by \eqref{eq:slice}, for appropriate new neighbourhoods of $id$ in $\D_0^{s+1}$ and of $g_0$ in $\M^s$, but that the theorem
does not give the existence
of a uniform slice for which the statement is valid for all numbers $s>3$. We furthermore stress that the theorem demands that the metric $g_0$ is not only in $\M^s$ 
but smooth; this in turn implies that all metrics contained in a small 
slice $S$ are smooth and thus satisfy stronger estimates than just $H^s$ bounds, in particular

\begin{lemma} \label{lemma:S}
For a sufficiently small slice $S$ around $g_0\in \M$ there exists a constant $C=C(s,g_0)<\infty$ such that for all metrics $g_{1,2}\in S$  
\beq
\label{est:lemma:S}
\norm{g_1-g_2}_{H^{s+1}}\leq C\cdot d_S(g_1,g_2).
\eeq
\end{lemma}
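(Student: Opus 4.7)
My approach is to reduce the statement to one about the finite-dimensional parameter space. By construction, $S$ is the image under the parametrisation $\Psi(h):=\rho(h)(g_0+h)$ of a neighbourhood $U$ of $0$ in the real vector space $H(g_0)$, which has dimension $6\gamma-6$. I would establish \eqref{est:lemma:S} in two steps: first that $\Psi$ is locally Lipschitz as a map into $H^{s+1}$, and second that the intrinsic distance $d_S$ controls any Euclidean distance on the parameter $h$; chaining these inequalities yields the claim.

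For the first step, I would use that each $h\in H(g_0)$ is the real part of a holomorphic quadratic differential on the \emph{smooth} hyperbolic surface $(M,g_0)$ and is therefore itself smooth, with $C^k$-norms controlled linearly by any fixed norm on the finite-dimensional space $H(g_0)$. The conformal factor $\rho(h)$ is in turn determined by the prescribed-curvature equation $K_{\rho(h)(g_0+h)}\equiv -1$, a uniformly elliptic quasilinear PDE whose unique solution depends smoothly on $h$; this is precisely the smooth dependence already worked out in Tromba's book en route to the slice theorem invoked above. Composing, $\Psi: U\to H^{s+1}(M,\Sym^2 T^*M)$ is $C^\infty$, and after shrinking $U$ so that $\overline U$ is compact the mean value inequality yields
$$\|g_1-g_2\|_{H^{s+1}} \leqs C_1\,|h_1-h_2|$$
for any fixed norm $|\cdot|$ on $H(g_0)$, with $C_1=C_1(s,g_0)$.

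For the second step, the distance $d_S$ is the Riemannian distance on $S$ inherited from the ambient $L^2$ structure on $\M$, and its pullback to $U$ via the diffeomorphism $\Psi$ is a smooth Riemannian metric on $U$. At the origin, a short computation using $K_{g_0}\equiv -1$ gives $\rho(0)=1$ and $d\rho|_0=0$, so that $d\Psi|_0:H(g_0)\to T_{g_0}\M$ is the inclusion onto the horizontal subspace of $T_{g_0}\M$; its pullback metric is therefore positive definite at $0$, and after possibly shrinking $U$ uniformly equivalent on $\overline U$ to any Euclidean norm. This yields $|h_1-h_2|\leqs C_2\cdot d_S(g_1,g_2)$, which combined with the first step gives \eqref{est:lemma:S}.

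The main obstacle is verifying the smooth dependence of $\rho(h)$ on $h$ into Sobolev spaces of arbitrary order; this requires applying the implicit function theorem to the prescribed-curvature equation uniformly in the regularity index. However this is exactly the analysis Tromba carries out en route to Theorem~\ref{thm:Tromba}, so it can be invoked rather than redone, and the remainder of the argument is then a direct consequence of the finite-dimensionality of the slice.
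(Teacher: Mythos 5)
Your proposal is correct and follows essentially the same route the paper sketches: combine the finite-dimensionality of $H(g_0)$ with the smooth dependence of the conformal factor $\rho(h)$ on $h$ (via the prescribed-curvature PDE), then chain the resulting Lipschitz bounds through the parameter $h$. One small inaccuracy worth flagging: the paper defines $d_S$ as the metric on $S$ induced by the ambient $H^s$ structure, not by $L^2$; since the $H^s$-induced path length dominates the $L^2$-induced one, your second step proves the (stronger) $L^2$ version and hence still yields the claimed estimate, so the argument is unaffected.
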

Here we denote by $d_S$ the $H^s$ metric on $S$, i.e.~consider $S$ as a submanifold of the Banach manifold $\M^s$.

Apart from the finite dimensionality of $H(g_0)$, and thus of $S$, the essential observation leading to the above estimate is that the conformal factor $\rho(h)$ can be characterised 
as the unique solution of an elliptic 
PDE, compare \cite{Tromba} section 1.5, leading to a smooth dependence of $\rho(h)$ on $h\in U$.

Based on these stronger estimates on elements of the slice, we can analyse the dependence of $P_g$ on $g\in S$ using an explicit formula for $P_g$ 
that we shall derive now.

We first recall the following canonical splitting of the tangent space $T_g\M^s$ into the horizontal and vertical space, see Theorem 2.4.1 of \cite{Tromba}.
\begin{lemma} \label{lemma:horiz}
For any $g\in \M$ the tangent space $T_g\M^s$ splits $L^2$-orthogonally into $H(g)$ and the space $\{L_Xg\}$ of Lie-derivatives. 
More precisely, given any $k\in T_g\M^s$ there is a unique vector field $X$ (of class $H^{s+1}$) such that 
$$tr_g(k-L_Xg)=0 \text{ and } \delta_g(k-L_Xg)=0$$
and $X$ can be characterised as the unique solution of the elliptic PDE
\beq \label{eq:X}
\delta_g\delta_g^* X=-\delta_g k,\eeq
 $\delta_g^*X=-L_Xg$ the $L^2(M,g)$-adjoint of $\de_g$.
\end{lemma}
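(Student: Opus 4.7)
The plan is to verify the two claims of the lemma in sequence: first the $L^2(M,g)$-orthogonality of the horizontal subspace $H(g)$ and the Lie-derivative subspace $\{L_Xg\}$, then existence and uniqueness of the decomposition with $X$ characterised by the elliptic equation \eqref{eq:X}.

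For orthogonality, I would integrate by parts using the defining relation $\de_g^*Y=-L_Yg$: for any $h\in H(g)$ and any $H^{s+1}$ vector field $Y$,
\beqs
\langle h,L_Yg\rangle_{L^2(M,g)}=-\langle h,\de_g^*Y\rangle_{L^2(M,g)}=-\langle\de_gh,Y\rangle_{L^2(M,g)}=0,
\eeqs
since $\de_gh=0$ by definition of $H(g)$. This already yields uniqueness of any splitting $k=h+L_Xg$, and (by the injectivity arguments in the next step) uniqueness of the vector field $X$.

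For existence, the key analytic input is that $\de_g\de_g^*$ is a second-order, formally self-adjoint, elliptic operator on vector fields whose kernel consists precisely of the Killing fields of $(M,g)$. Since $(M,g)$ is a closed hyperbolic surface of genus $\ga\geq 2$, its isometry group is finite, so this kernel is trivial; standard Fredholm theory then makes $\de_g\de_g^*$ an isomorphism between the appropriate $H^{s+1}$ and $H^{s-1}$ completions. Applied to the right-hand side $-\de_gk\in H^{s-1}$, this produces a unique $X\in H^{s+1}$ solving \eqref{eq:X}, and the tensor $h:=k-L_Xg=k+\de_g^*X$ satisfies $\de_gh=\de_gk+\de_g\de_g^*X=0$ by construction.

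The main obstacle, and the step I expect to require the most care, is the remaining identity $\tr_gh=0$. I would exploit the fact that both $k$ (by assumption) and $L_Xg$ (because the flow of $X$ consists of diffeomorphisms of $M$, which preserve hyperbolicity) lie in $T_g\M^s$, so their difference $h$ also satisfies the linearisation at $g$ of the constant-curvature equation $K\equiv -1$. Writing out this linearised two-dimensional Gauss-curvature identity and then substituting $K=-1$ together with $\de_gh=0$ (which makes the second-divergence term vanish) collapses the constraint to $\lap_g(\tr_gh)-\tr_gh=0$. Pairing with $\tr_gh$ and integrating by parts gives
\beqs
-\int_M|\na(\tr_gh)|^2\,dv_g=\int_M(\tr_gh)^2\,dv_g,
\eeqs
whose left-hand side is non-positive and right-hand side non-negative, so each must vanish and $\tr_gh\equiv 0$. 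This yields $h\in H(g)$ and completes the proof; note that the hyperbolic sign $K<0$ is used in an essential way, in contrast to the spherical case, where the analogous equation has non-trivial eigenfunctions, reflecting the presence of conformal Killing fields.
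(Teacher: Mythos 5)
Your proof is correct. The paper does not prove Lemma \ref{lemma:horiz} at all — it simply cites Theorem 2.4.1 of Tromba's book — and the argument you give (orthogonality by a single integration by parts using $\de_g h=0$, unique solvability of \eqref{eq:X} from ellipticity and self-adjointness of $\de_g\de_g^*$ together with the vanishing of Killing fields on a closed negatively curved surface, and trace-freeness of $k-L_Xg$ from the linearised constant-curvature constraint where the sign $K=-1$ is decisive) is precisely the standard argument supplied by that reference; note also that the two ingredients you flag as essential are both invoked elsewhere in the paper, namely the triviality of Killing fields just after Lemma \ref{lemma:elliptic} and the linearised curvature operator $DR(g)$ in equation \eqref{eq:mu}.
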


In order to define the orthogonal projection of a general symmetric $(0,2)$ tensor $k$ onto the horizontal space $H(g)$, we first map $k$ onto an element of $T_g\M^s$ using 
\begin{lemma}\label{lemma:Proj1}
For any $g\in \M$ and any symmetric $(0,2)$ tensor $k$ of class $H^s$ there exists a unique function $\mu\in H^s(M,\R)$ such that 
$$k-\mu \cdot g\in T_g\M^s.$$
The function $\mu$ is characterised as the unique solution of the equation 
\beq \label{eq:mu}
-\Delta_g \mu+2\mu=2DR(g)(k),
\eeq
$R(g)$ the Gauss curvature of $(M,g)$.
\end{lemma}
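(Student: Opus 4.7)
The plan is to identify $T_g\M^s$ as the kernel of the linearisation $DR(g)$ of the Gauss-curvature map, then reduce the lemma to the invertibility of the standard positive elliptic operator $-\Delta_g + 2$. I first observe that for genus $\gamma \geq 2$ the set $\M^s$ is cut out of the space of $H^s$ Riemannian metrics by the single constraint $R(g) = -1$, the area being then fixed automatically by Gauss--Bonnet. Hence, by Theorem 1.6.1 of \cite{Tromba}, $T_g\M^s = \ker\bigl(DR(g)\bigr)$, and the condition $k - \mu g \in T_g\M^s$ is equivalent to $DR(g)(\mu g) = DR(g)(k)$.

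The next step is to compute $DR(g)(\mu g)$ explicitly. Writing $(1+\eps\mu)g = e^{2\si_\eps}g$ with $\si_\eps = \frac{1}{2}\log(1+\eps\mu) = \frac{\eps\mu}{2} + O(\eps^2)$ and substituting into the standard conformal transformation law
$$R(e^{2\si}g) = e^{-2\si}\bigl(R(g) - \Delta_g \si\bigr),$$
a direct linearisation at $\eps = 0$, using $R(g) = -1$, yields
$$DR(g)(\mu g) = \mu - \tfrac12 \Delta_g \mu.$$
Combined with the previous step, this shows that $k - \mu g \in T_g\M^s$ is equivalent to $-\Delta_g\mu + 2\mu = 2\,DR(g)(k)$, which is the stated PDE.

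It remains to show that this equation has a unique solution $\mu \in H^s(M,\R)$. Since $g \in \M$ is smooth, the operator $L := -\Delta_g + 2$ has smooth coefficients; it is furthermore self-adjoint and strictly positive on $L^2(M,g)$, as
$$\langle L\mu, \mu\rangle_{L^2(M,g)} = \norm{\na\mu}_{L^2}^2 + 2\norm{\mu}_{L^2}^2 \geq 2\norm{\mu}_{L^2}^2.$$
Hence $L: H^s(M,\R) \to H^{s-2}(M,\R)$ is an isomorphism by standard linear elliptic theory. Since $DR(g)$ involves two derivatives of its argument, $k \in H^s$ implies $2\,DR(g)(k) \in H^{s-2}$, and inverting $L$ gives the unique $\mu \in H^s$.

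The only mildly delicate point is keeping the numerical factors straight in the conformal linearisation of $R$; once that formula is in place, the remainder of the proof is routine elliptic theory combined with the identification of $T_g\M^s$ from \cite{Tromba}.
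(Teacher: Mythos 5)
Your proof is correct. The paper itself states Lemma~\ref{lemma:Proj1} without proof, so there is no in-text argument to compare against, but your line of reasoning is exactly what the surrounding material and the cited reference \cite{Tromba} suggest: identify $T_g\M^s$ as $\ker DR(g)$ (Tromba, Thm.~1.6.1), reduce the condition $k-\mu g\in T_g\M^s$ to $DR(g)(\mu g)=DR(g)(k)$, and compute $DR(g)(\mu g)=\mu-\tfrac12\Delta_g\mu$ from the conformal transformation law $R(e^{2\sigma}g)=e^{-2\sigma}(R(g)-\Delta_g\sigma)$ using $R(g)=-1$. Your linearisation is numerically correct, and the invertibility of $-\Delta_g+2:H^s\to H^{s-2}$ gives both existence and uniqueness. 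One remark worth keeping in mind: your reduction implicitly uses that $DR(g):H^s(\Sym^2)\to H^{s-2}$ is surjective so that $\M^s=R^{-1}(-1)$ is in fact a submanifold with $T_g\M^s=\ker DR(g)$; but this surjectivity follows from the very computation you perform (the restriction of $DR(g)$ to the conformal directions $\mu g$ is $\tfrac12(-\Delta_g+2)$, an isomorphism onto $H^{s-2}$), so the argument is self-contained. This is consistent with the alternative characterisation of $T_g\M^s$ as $H(g)\oplus\{L_Xg\}$ used in Lemma~\ref{lemma:horiz}: both $H(g)$ (trace- and divergence-free) and the Lie derivatives (by diffeomorphism invariance of $R$) lie in $\ker DR(g)$.
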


Given any $g\in \M$, we now claim that the orthogonal projection $P_g:Sym^2(M)\to H(g)$ is given by 
\beq \label{def:PH}
P_g(k):= k-\mu(k,g)\cdot g-L_{X(k-\mu(k,g)\cdot g,g)} g
\eeq
where $X(\cdot)$ and $\mu(\cdot)$ stand for the corresponding solutions of \eqref{eq:X} and \eqref{eq:mu}.
Indeed, $P_g|_{H(g)}=id$ and for general $k\in Sym^2(M)$ the tensor given by \eqref{def:PH} is well defined and divergence- as well as trace-free with respect to $g$, i.e.~an element of $H(g)$. 
Furthermore, $k-P_g(k)$ stands orthogonal to any $h\in H(g)$ as
\beqas
\langle h,k-P_g(k)\rangle_{L^2(M,g)}&=\langle h,\mu \cdot g\rangle_{L^2}+\langle h,L_Xg\rangle_{L^2}=\int_{M}\mu\cdot tr_g(h)\, dv_g+\langle h,-\delta_g^* X\rangle_{L^2}\\
&=-\langle \delta_g h,X\rangle_{L^2} =0.
\eeqas
To analyse the dependence of $P_g$ on $g$ we now use that $X$ and $\mu$ are characterised by elliptic PDEs for which the following uniform estimates apply
\begin{lemma}
\label{lemma:elliptic} 
Let $s>3$ and $g_0\in \M$ be given and let  $S=S(g_0,s)$ be a sufficiently small slice.
Then there exists a constant $C=C(s,g_0)<\infty$ such that the following claims hold true for every $g\in S$.
For every vector field $Y$ there is a unique solution of the equation 
\beq \label{eq:elliptic}
\delta_g\delta_g^* X=Y
\eeq
and for any $0\leq l\leq s+1$ we have
\beqs
\norm{X}_{H^{l}}\leq C\cdot\norm{Y}_{H^{l-2}}.
\eeqs
Similarly, the unique solution $\mu$ of 
\beq \label{eq:elliptic2}
-\Delta_g\mu +2\mu=f\in H^{l-2}(M,\R),
\eeq
satisfies 
\beqs
\norm{\mu}_{H^{l}}\leq C\cdot\norm{f}_{H^{l-2}}, \quad 0\leq l\leq s.
\eeqs
\end{lemma}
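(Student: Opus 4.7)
The plan is to treat both PDEs \eqref{eq:elliptic} and \eqref{eq:elliptic2} as families of second-order elliptic operators on the compact surface $M$, parametrized smoothly by $g\in S$, and then invoke standard elliptic theory uniformly in $g$. First, I would verify ellipticity and self-adjointness. The operator $-\Delta_g+2$ is the classical shifted Laplace--Beltrami operator and is manifestly elliptic and self-adjoint in $L^2(M,g)$. For the second operator, the principal symbol of $\delta_g^*$ at $\xi\in T^*M$ is the map $v\mapsto \tfrac12(\xi\otimes v^\flat + v^\flat\otimes\xi)$; pairing it with $\delta_g$ shows that $\delta_g\delta_g^*$ is a positive-definite elliptic second-order operator on vector fields, formally self-adjoint in $L^2(M,g)$.

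Next I would establish injectivity, which together with self-adjointness yields bijectivity. For \eqref{eq:elliptic2}, pairing the equation with $\mu$ in $L^2(M,g)$ gives $\int_M(|\nabla\mu|_g^2+2\mu^2)\,dv_g=0$, forcing $\mu\equiv0$. For \eqref{eq:elliptic}, pairing with $X$ gives $\|\delta_g^*X\|_{L^2(M,g)}^2=\|L_Xg\|_{L^2(M,g)}^2/4=0$, so $X$ is a Killing field of $(M,g)$. Since $(M,g)$ is a closed hyperbolic surface (of genus $\gamma\geq2$) the isometry group is discrete, so $X\equiv0$. Combining with standard elliptic regularity on compact manifolds, for each fixed $g\in S$ one obtains the bijective estimate $\|X\|_{H^l}\leq C(g)\|Y\|_{H^{l-2}}$ and similarly for $\mu$, in the stated range of $l$.

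The remaining and main issue is uniformity of the constant across the slice $S$. Here I would exploit the crucial fact, noted just before the lemma, that all metrics in $S$ are smooth, and more quantitatively that Lemma~\ref{lemma:S} bounds $\|g-g_0\|_{H^{s+1}}$ by $d_S(g,g_0)$. By Sobolev embedding on the 2-manifold $M$, this controls $g$ in $C^{s-1}$, and hence controls all coefficients of $\delta_g\delta_g^*$ and $-\Delta_g+2$ uniformly. Writing the operator at $g$ as a perturbation of the one at $g_0$, shrinking $S$ if necessary, the perturbation in operator norm (say from $H^l$ to $H^{l-2}$) becomes arbitrarily small, and by the open mapping theorem / Neumann series the inverse remains uniformly bounded. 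This yields the required estimate with $C=C(s,g_0)$ independent of $g\in S$.

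The main obstacle is precisely this uniformity step. The ellipticity and Fredholm properties at a single metric are classical, but passing to a uniform constant requires that lower-order coefficients (curvature, Christoffel symbols) stay uniformly controlled as $g$ varies in $S$; this is where smoothness of the slice metrics combined with Lemma~\ref{lemma:S} is essential, as a merely $H^s$-bounded family would not give enough regularity of the coefficients to run the perturbation argument up to the top order $l=s+1$. Once these uniform coefficient bounds are in place, the Neumann-series perturbation trick reduces everything to the already-established bijectivity at $g_0$.
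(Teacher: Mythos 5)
Your proof is correct and follows essentially the same route as the paper, which gives only a very terse argument: the paper asserts that the kernel of $\delta_g\delta_g^*$ consists of Killing fields and is therefore trivial by Bochner's theorem for negatively curved manifolds (citing Kobayashi--Nomizu, Thm.\ 5.3), appeals to elliptic regularity and the Fredholm alternative for each fixed $g$, and then states the uniformity over $S$ in one sentence. You fill in exactly the right details: the integration-by-parts identification of the kernel, the role of the discreteness of $\mathrm{Isom}(M,g)$ for closed hyperbolic surfaces (equivalent to the Bochner argument), and --- most importantly --- the observation that the uniformity at the top order $l=s+1$ genuinely requires the $H^{s+1}$ control on $g-g_0$ furnished by Lemma~\ref{lemma:S} (not merely an $H^s$ bound), which is then converted into a small perturbation of the invertible operator at $g_0$ via a Neumann series. (A cosmetic point: with the paper's convention $\delta_g^*X=-L_Xg$, one has $\|\delta_g^*X\|^2=\|L_Xg\|^2$ rather than $\|L_Xg\|^2/4$, but this does not affect the conclusion that $X$ is Killing.)
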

We remark that the occurring Sobolev norms with negative exponent are to be understood as the norms of the coefficients in the dual spaces 
$H^{-k}(\Omega)=(H_0^k(\Omega))^{*}$, $\Omega\subset \R^2$.

The reason why the solution $X$ of \eqref{eq:elliptic} is unique is that we work on a surface that has negative curvature. Thus the kernel of $\de_g\de_g^*$, which agrees 
with the space of Killing-fields, is trivial, see e.g.~\cite{Kob-Nom}, Thm.~5.3. Elliptic regularity theory combined with the Fredholm alternative theorem then immediately gives the estimates
for each individual $g\in S$. These estimates are indeed uniform since all metrics in $S$ are contained in a small ($H^s$) neighbourhood of $g_0$.

We can now give the proof of Proposition \ref{Prop:S}, first for metric contained in the slice. 

Let $g_0\in \M$, $s>3$ and let $S=S(g_0,s)$ be a small slice as defined above. 
Combining the elliptic estimates of Lemma \ref{lemma:elliptic} with \eqref{def:PH} 
and the bounds on $g$ given in Lemma \ref{lemma:S}, we find that for every $0\leq l\leq s$ and every $k\in Sym^2(M)$
\beqa\label{est:PH-proof1}
\norm{P_g(k)}_{H^l}&\leq \norm{k}_{H^l}+C\norm{\mu}_{H^l}+C\norm{X}_{H^{l+1}}\\
&\leq \norm{k}_{H^l}+C(\norm{DR(g)(k)}_{H^{l-2}}+\norm{\delta_g k}_{H^{l-1}})\leq C\norm{k}_{H^l}.
\eeqa
Here and in the following we crucially use that Lemma \ref{lemma:S} gives bounds on $s+1$ derivatives of $g$ so that we may estimate the 
$H^s$ and not just the $H^{s-1}$ norm of Lie-derivatives $L_Xg$. 

Similarly, given any $C^1$ curve $g$ in the slice, we differentiate the corresponding equations \eqref{eq:X} and \eqref{eq:mu} characterising $X$ and $\mu$. This leads to  elliptic 
PDEs of the form \eqref{eq:elliptic} and \eqref{eq:elliptic2} for  
$\ddt X(t)$ and $\ddt\mu(t)$. Applying Lemma \ref{lemma:elliptic} and making use of the bound $\norm{\ddt g}_{H^{s+1}}\leq C\cdot \norm{\ddt g}_{H^s}$ of Lemma
\ref{lemma:S}, we obtain
\beq \label{est:PH-proof2}
\norm{\frac{d}{dt} P_{g(t)}(k)}_{H^{l}}\leq C\norm{\ddt g}_{H^s}\cdot \norm{k}_{H^l}
\eeq
for any (sufficiently smooth) tensor $k\in Sym^2(M)$ and any $0\leq l\leq s$. 

In order to establish the estimates \eqref{est:Pg1} and \eqref{est:Pg2} claimed in Proposition \ref{Prop:S} we now need to prove that 
the two estimates \eqref{est:PH-proof1} and \eqref{est:PH-proof2} obtained above remain valid with the $H^l$ norm on the right hand side replaced with the $L^2$ norm. We use

\textit{Claim:} There exists $C<\infty$ such that for all $g\in S$ and all $h\in H(g)$  
\beqs \norm{h}_{H^{s}}\leq C\norm{h}_{L^2(M,g)}. 
\eeqs

\textit{Proof of Claim:} 
The estimate trivially holds true for $g=g_0$ (or indeed for any one fixed metric) since $H(g_0)$ is a finite dimensional space of smooth tensors. 
For general $g\in S$ we can parametrize $H(g)$ over $H(g_0)$ by restricting the projection $P_g$ onto $H(g_0)$. Using estimate \eqref{est:PH-proof1}, we then get
\beq\label{est:H(g)}\norm{P_g(k)}_{H^{s}}\leq C\norm{k}_{H^{s}}\leq C\norm{k}_{L^2} \text{ for every } k\in H(g_0), g\in S,\eeq
with $\norm{\cdot}_{L^2}$ denoting one of the equivalent $L^2(M,g)$ norms, $g\in S$, say $\norm{\cdot}_{L^2(M,g_0)}$.

On the other hand, integrating \eqref{est:PH-proof2} for $l=0$ along a suitable curve of metrics connecting $g_0$ to $g$ and making use of the fact that $P_{g_0}\vert_{H(g_0)}=id$, we obtain
that for any $k\in H(g_0)$
\beqas
\norm{k}_{L^2}&\leq \norm{P_g(k)-k}_{L^2}+\norm{P_g(k)}_{L^2}\leq C d_S(g,g_0)\cdot \norm{k}_{L^2}+\norm{P_g(k)}_{L^2}\\
&\leq \frac12 \norm{k}_{L^2}+\norm{P_g(k)}_{L^2}\eeqas
provided the slice is chosen small enough. 
Combined with estimate \eqref{est:H(g)} this implies the claim for tensors in the image $P_g(H(g_0))\subset H(g)$ which must agree with $H(g)$ because 
$P_g\vert_{H(g_0)}$ is injective and $\dim(H(g))=\dim(H(g_0))$. 

Combining this claim with the estimate \eqref{est:PH-proof1} for $l=0$ we have thus proved the first claim \eqref{est:Pg1} of Proposition \ref{Prop:S} for general tensors $k\in Sym^2(M)$ and for 
metrics $g\in S$ in the slice. 

To obtain an improved version of \eqref{est:PH-proof2}, we write 
$$P_{g(t)}(k)=P_{g(t)}\big(P_{g(t_0)}(k)\big)+P_{g(t)}\big(P_{g(t)}(k)-P_{g(t_0)}(k)\big)$$
and estimate the derivative of the right hand side at $t=t_0$. Estimate \eqref{est:PH-proof2}, applied first for $l=s$ and then for $l=0$, combined with the estimate \eqref{est:Pg1} we just proved
then implies that for any $k\in Sym^2(M)$
\beqas \norm{(\ddt P_{g(t)}(k))(t_0)}_{H^{s}}&\leq \norm{\ddt g}_{H^s}\cdot \norm{P_{g(t_0)}(k)}_{H^{s}}+\norm{P_{g(t_0)}(\ddt P_{g(t)}(k))}_{H^{s}}\\
&\leq C\norm{\ddt g}_{H^s}\cdot \norm{k}_{L^2}+\norm{\ddt P_{g(t)}(k)}_{L^2}\leq C\norm{\ddt g}_{H^s}\cdot \norm{k}_{L^2}.\eeqas

This completes the proof of Proposition \ref{Prop:S} for metrics $g$ contained in the slice. We now pull back these estimates to the full $H^s$ neighbourhood $W$ of $g_0$ given by the slice-theorem \ref{thm:Tromba}.
The key observation allowing us to do so is that the projection onto the horizontal space commutes with the pull-back
$$f^*P_g(k)=P_{f^*g}(f^*k).$$

Thus, given a $C^1$ curve $g$ in $W$, we write it (uniquely) in the form $g(t)=f(t)^*\bar g(t)$, for $f(t)\in V\subset \D_0^{s+1}$ and $\bar g(t)\in S$ and recall that $\norm{\ddt \bar g}_{H^s}$ and 
$\norm{\ddt f}_{H^{s+1}}$ are controlled by $\norm{\ddt g}_{H^s}$, see Theorem \ref{thm:Tromba}. Indeed, since the diffeomorphisms $f$ are contained in a neighbourhood of the identity, also $\norm{\ddt f^{-1}}_{H^{s+1}}$ is bounded in this way. 
Applying estimates \eqref{est:Pg1} and \eqref{est:Pg2} for $\bar g\in S$, we thus find 
$$\norm{P_g(k)}_{H^s}=\norm{f^*(P_{\bar g}((f^{-1})^*k)}_{H^s}\leq C\cdot \norm{P_{\bar g}((f^{-1})^*k)}_{H^s}\leq C\norm{k}_{L^2}$$
as well as 
\beqas
\norm{\ddt P_{g(t)}(k)}_{H^s}& \leq C\cdot \norm{\ddt f}_{H^{s+1}}\cdot \norm{P_{\bar g}((f^{-1})^*k)}_{H^s}+C\norm{\ddt (P_{\bar g(t)}((f(t)^{-1})^*k)}_{H^s}\\
&\leq C\cdot \big(\norm{\ddt f}_{H^{s+1}}+ \norm{\ddt \bar g}_{H^s}+\norm{\ddt f^{-1}}_{H^{s+1}}\big)\cdot \norm{k}_{L^2}\\
&\leq C\cdot \norm{\ddt g}_{H^s}\cdot \norm{k}_{L^2}
\eeqas
for any tensor $k\in Sym^2(M)$ and any curve in $W$ as claimed in Proposition \ref{Prop:S}.\end{proof}
\textit{Proof of Proposition \ref{Prop:horizontal}.}
For any number $s>3$ we define a function $\th:\M^s\to [0,\infty]$ as follows. 
For any metric $g_0\in\M^s$ we let $\th(g_0)$ be the supremum of all numbers $\th\geq 0$ such that there exists a number $C<\infty$ for 
which estimate \eqref{est:Ck} holds true for all (piecewise) horizontal
curves in $\M^s$ of length $L_{L^2}(g)\leq\th$ and with $g(0)=g_0$. We stress that both this constant $C$, as well as the constant in Proposition \ref{Prop:horizontal}, are allowed to depend on 
the metric $g_0$. 

We first claim that the function $\theta$ is strictly positive for all \textit{smooth} metrics. So let $g_0\in \M$ and let $W$ be the neighbourhood of $g_0$ in $\M^s$ for which Proposition \ref{Prop:S} applies. 
Writing the velocity of any horizontal curve as 
$\ddt g=P_g(\ddt g)$ and applying Proposition \ref{Prop:S} we find that 
$$\norm{\ddt g}_{H^s}\leq C\norm{\ddt g}_{L^2(M,g)}$$
for as long as the curve is contained in $W$. But $W$ is an $H^s$ neighbourhood, so this estimate implies that any curve of small enough $L^2$ length 
and with $g(0)=g_0$ is fully contained in $W$ and thus that indeed $\th(g_0)>0$.

Secondly, we observe that $\th$ is invariant under the pull-back by diffeomorphisms. More precisely let $\D^{s+1}$ be the set of all diffeomorphism of class $H^{s+1}$ (not necessarily homotopic
to the identity). Then we claim that for any $g\in\M^s$ and any $f\in\D^{s+1}$ 
$$\th(f^*g_0)=\th(g_0).$$
Indeed, pulling-back any horizontal curve $g$ in $\M^s$ by a fixed diffeomorphism $f\in \D^{s+1}$ results 
in another horizontal curve of the same $L^2$-length and with velocity bounded by
$\norm{\ddt(f^*g(t))}_{H^s}\leq C\cdot \norm{\ddt g(t)}_{H^s}$, with $C<\infty$ a constant depending on $f$. 
But we defined $\th(g)$ asking only for an estimate of the form \eqref{est:Ck} to be satisfied for \textit{some} constant $C<\infty$, allowed to depend on the considered metric, so the claim follows.

We conclude that $\th$ induces a positive map $\bar\th$ on moduli space $\M/\D$ and now want to prove that this function is continuous with respect to the Weyl-Peterson metric $d_{WP}$.

We recall that the length of a $C^1$ curve $[g]$ in moduli space (with respect to the Weyl-Peterson metric) is given by 
$$L_{WP}([g])=\frac12L_{L^2}(\tilde g)$$
for $\tilde g$ a `horizontal lift'  of 
$[g]$, that is a \textit{horizontal} curve $\tilde g\in \M$ with $[\tilde g(t)]=[g(t)]$ for each $t$. 

Given any two points 
$[g_1]$ and $[g_2]$ in $\M/\D$ we now claim that  
$$\bar\th([g_2])\geq \bar\th([g_1])-2\cdot d_{WP}([g_1],[g_2]),$$ 
and thus switching the roles of $[g_1]$ and $[g_2]$ that $\bar \th$ is Lipschitz continuous on moduli space $(\M/\D,d_{WP})$.
So let $\de>0$ be any fixed number and choose a (piecewise) horizontal path $\tilde g$ of $L^2$-length less than $2\cdot d_{WP}([g_1],[g_2])+\de/2$ 
that connects a representative $f^*g_1$ of $[g_1]$ with $g_2$. Let now $g$ be any given 
(piecewise) horizontal curve with $g(0)=g_2$ and of length $L_{L^2}(g)\leq \bar \th([g_1])-2 d_{WP}([g_1],[g_2])-\de$. Precomposing it with 
$\tilde g$ we obtain a curve $G$ of length $L_{L^2}(G)\leq \bar\th([g_1])-\de/2=\th(f^*g_1)-\de/2$ and with starting point $G(0)=f^*g_1$.
By definition of $\th(f^*g_1)$, the estimate \eqref{est:Ck} is satisfied for the extended curve 
$G$ and thus in particular for $g$ itself, with a constant $C$ depending on $f^*g_1$ and possibly $\de$ but not on $g$. We obtain the claim since $\de> 0$ can be chosen arbitrarily small.  

Given any number $\eps>0$ we now consider the subset $K_\eps$ of moduli space consisting of the equivalence classes of smooth metrics with shortest closed geodesic of length 
no less than $\eps$. This set $K_\eps$ is compact by the Mumford compactness theorem, see e.g.~\cite{Tromba}, p.75. As a positive and continuous function, $\bar \th$ is thus bounded away from zero uniformly on $K_\eps$
which implies Proposition \ref{Prop:horizontal} for smooth metrics.  

For non-smooth metrics $g\in \M^s\setminus \M$, we finally obtain the claim of Proposition \ref{Prop:horizontal} using the invariance of $\th$ under $H^{s+1}$ diffeomorphisms as well as
 
\begin{lemma}\label{lemma:pull-back}
 Given any $g\in\M^s$ there exists a smooth metric $\bar g\in \M$ and a diffeomorphism $f$ of class $H^{s+1}$ such that 
$$g=f^*\bar g.$$
\end{lemma}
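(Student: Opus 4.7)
The plan is to combine Tromba's slice theorem (Theorem \ref{thm:Tromba}) with the density of smooth hyperbolic metrics in $\M^s$. The goal, given $g\in\M^s$, is to produce a smooth $\bar g\in\M$ and a diffeomorphism $f\in\D^{s+1}$ with $g=f^*\bar g$.

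First I would establish that $\M$ is dense in $\M^s$ in the $H^s$-topology. Given $g\in\M^s$, standard mollification in the fixed coordinate charts yields smooth approximations $\tilde g_k\to g$ in $H^s$; the metrics $\tilde g_k$ are not hyperbolic, but Poincar\'e uniformization provides smooth conformal factors $\lambda_k^2$ such that $g_k:=\lambda_k^2\tilde g_k\in\M$. The factor $\lambda_k$ is characterised as the solution of an elliptic PDE (of the same type as the one describing $\rho(h)$ in Section 1.5 of \cite{Tromba}), whose coefficients and right-hand side depend continuously on the underlying metric. Since $g$ itself is hyperbolic, the corresponding factor is identically $1$, so continuous dependence of the solution of the elliptic equation combined with $\tilde g_k\to g$ in $H^s$ yields $g_k\to g$ in $H^s$.

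Second, I would apply Theorem \ref{thm:Tromba}. Choosing $g_0:=g_k\in\M$ for $k$ large, the slice theorem produces an $H^s$-open neighbourhood $W=W(g_0)$ of $g_0$ and a diffeomorphism $S(g_0)\times V\to W$, where $V\subset\D_0^{s+1}$ and $S(g_0)\subset\M$ consists exclusively of smooth metrics (as emphasised in the discussion preceding Lemma \ref{lemma:S}). Provided $g\in W(g_0)$, the theorem yields $g=f^*\bar g$ with $\bar g\in S(g_0)\subset\M$ and $f\in V\subset\D_0^{s+1}\subset\D^{s+1}$, which gives the lemma.

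The main obstacle is that the $H^s$-radius of $W(g_0)$ depends a priori on $g_0$ and could in principle shrink to zero as $g_0\to g$, so that no $g_k$ admits a slice neighbourhood containing $g$. To rule this out I would use a quantitative form of Theorem \ref{thm:Tromba}: its proof via the implicit function theorem shows that the radius of $W(g_0)$ is controlled by the operator norm of the inverse of the derivative of the slice map at $(0,\mathrm{id})$, which is precisely the splitting isomorphism $H(g_0)\oplus\{L_Xg_0\}\to T_{g_0}\M^s$ of Lemma \ref{lemma:horiz}. Since this isomorphism is governed by the elliptic estimates of Lemma \ref{lemma:elliptic}, whose constants are stable under small $H^s$-perturbations of $g_0$, the radius of $W(g_0)$ is uniformly bounded below for $g_0$ in a fixed $H^s$-neighbourhood of $g$. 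Combined with the density from Step 1, this guarantees that some $g_0=g_k$ in the approximating sequence produces a slice neighbourhood containing $g$, completing the proof.
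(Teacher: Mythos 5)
Your approach is genuinely different from the paper's, and it has a real gap. The paper proves this lemma by an open-closed argument on $\M^s$: it defines $\Omega\subset\M^s$ to be the set of metrics of the form $f^*\bar g$ with $f\in\D^{s+1}$ and $\bar g\in\M$, shows $\Omega$ is open by applying the slice theorem at any one smooth representative, shows $\Omega$ is closed using the Ebin--Palais properness of the $\D^{s+1}$-action together with the Mumford compactness theorem, and concludes $\Omega=\M^s$. You instead try to hit $g$ directly with a single application of the slice theorem at a nearby smooth hyperbolic metric $g_k$. The density step (mollify, then uniformise) is fine, but the strategy then hinges entirely on the claim that the slice neighbourhoods $W(g_k)$ have an $H^s$-radius bounded below uniformly as $k\to\infty$, and that claim is not justified.

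The problem is that the inverse function theorem for the slice map $S\times V\to\M^s$ requires inverting the splitting isomorphism of Lemma \ref{lemma:horiz}, i.e.\ solving $\delta_{g_0}\delta_{g_0}^*X=-\delta_{g_0}k$ for $k$ of class $H^s$ and recovering $X$ of class $H^{s+1}$ --- the top endpoint $l=s+1$ in Lemma \ref{lemma:elliptic}. For a second-order operator whose coefficients involve two derivatives of the metric, obtaining $X\in H^{s+1}$ with a controlled constant requires control on more than $\|g_0\|_{H^s}$; already the worst commutator term $\partial^{s-1}(\text{zeroth-order coefficient})\cdot X$ produces $s+1$ derivatives of $g_0$, so a bound of $H^{s+1}$ type (and, for the Lipschitz modulus of the derivative needed in the quantitative IFT, even more) is unavoidable. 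This is exactly why the paper stresses, after Theorem \ref{thm:Tromba}, that the slice theorem \emph{demands} $g_0$ smooth, and why Lemma \ref{lemma:S} records $H^{s+1}$ bounds on slice metrics rather than merely $H^s$ bounds. If $g_k$ arises from mollification of a genuinely non-smooth $g\in\M^s$, then $\|g_k\|_{H^{s+1}}\to\infty$, the relevant elliptic constants blow up, and the slice radii cannot be expected to stay bounded away from zero. Your assertion that the constants of Lemma \ref{lemma:elliptic} ``are stable under small $H^s$-perturbations of $g_0$'' is therefore unjustified, and at the top derivative level false. The paper's open-closed argument sidesteps this entirely: openness needs only \emph{some} positive radius at each smooth representative, with no uniformity, and closedness is supplied by an unrelated compactness mechanism (Ebin--Palais plus Mumford), so no quantitative slice estimate is ever required.
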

For the sake of completeness we provide a proof of this fact in the appendix.

For most arguments in the rest of the paper the estimates of Proposition \ref{Prop:S} and \ref{Prop:horizontal}, controlling the $L^2$-orthogonal projection in terms of the $L^2$ norms of the involved
tensors, would be sufficient, though would in some cases lead to slightly weaker regularity results. 
For the proof of uniqueness of weak solutions carried out
in section \ref{section:uniqueness} it is however crucial that we can extend $P_g$ continuously onto the space of tensors with finite $L^1$ norm

\begin{lemma}\label{lemma:L1}
For any $g_0\in \M$ and any $s>3$ there exists a neighbourhood $W$ of $g_0$ in $\M^s$ such that the following holds true.
The map $P_g$ is Lipschitz-continuous as a map from $W$ to the space of linear maps from $(Sym^2(M),\norm{ }_{L^1})$ to the tangent bundle $T\M^s$, i.e.~there exists a constant $C=C(g_0,s)<\infty$
such that for all $g_1,g_2\in W$ and $k\in Sym^2(M)$
\beq \label{est:L1}\norm{P_{g_1}(k)}_{H^s}\leq C\cdot \norm{k}_{L^1} \text{ and } \norm{P_{g_1}(k)-P_{g_2}(k)}_{H^s}\leq C\cdot d_{\M^s}(g_1,g_2)\cdot \norm{k}_{L^1}.\eeq
\end{lemma}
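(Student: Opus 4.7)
The strategy is to upgrade the $L^2$-based estimates of Proposition \ref{Prop:S} to $L^1$-based estimates by combining the finite dimensionality of $H(g)$ with the two-dimensional Sobolev embedding $H^s\hookrightarrow L^\infty$ (valid since $s>3>1$). The first inequality follows by duality: using that $P_g$ is the $L^2(M,g)$-orthogonal projection onto $H(g)$ (extended, if necessary, to tensors $k\in L^1\setminus L^2$ by declaring $P_g(k)$ to be the unique element of $H(g)$ with $\langle P_g(k),h\rangle_{L^2(M,g)}=\int_M\langle k,h\rangle_g\,dv_g$ for all $h\in H(g)$, which is well-defined since $H(g)\subset L^\infty$ is finite dimensional), one has
\beqs
\norm{P_g(k)}_{L^2(M,g)}^2=\langle k,P_g(k)\rangle_{L^2(M,g)}\leq \norm{k}_{L^1(M,g)}\cdot\norm{P_g(k)}_{L^\infty(M,g)}.
\eeqs
Applying Proposition \ref{Prop:S} to the element $P_g(k)\in H(g)$ itself (on which $P_g$ acts as the identity) gives $\norm{P_g(k)}_{H^s}\leq C\norm{P_g(k)}_{L^2}$, and Sobolev embedding gives $\norm{P_g(k)}_{L^\infty}\leq C\norm{P_g(k)}_{H^s}$, both uniformly for $g$ in a small $H^s$-neighbourhood $W$ of $g_0$. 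Combining these with the previous display yields $\norm{P_g(k)}_{L^2}\leq C\norm{k}_{L^1}$ and hence the first claim.

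For the Lipschitz dependence in $g$, I fix a basis $h_1,\dots,h_N$ of $H(g_0)$ (with $N=6\gamma-6$). Shrinking $W$ if needed, the tensors $e_i(g):=P_g(h_i)$ form a basis of $H(g)$ for every $g\in W$, as already used in the proof of Proposition \ref{Prop:S}. Write $P_g(k)=\sum_i\alpha_i(g,k)\,e_i(g)$; the coefficient vector solves the linear system $A(g)\,\alpha(g,k)=\beta(g,k)$ with Gram matrix $A_{ij}(g)=\langle e_i(g),e_j(g)\rangle_{L^2(M,g)}$ and right-hand side $\beta_j(g,k)=\langle k,e_j(g)\rangle_{L^2(M,g)}$. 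Since $A(g_0)$ is nondegenerate and $g\mapsto A(g)$ is Lipschitz on $W$, the inverse $A(g)^{-1}$ is Lipschitz on $W$ with uniform bounds. The essential observation is that the right-hand side satisfies
\beqas
\abs{\beta_j(g,k)}&\leq C\norm{k}_{L^1}\norm{e_j(g)}_{L^\infty}\leq C\norm{k}_{L^1},\\
\abs{\beta_j(g_1,k)-\beta_j(g_2,k)}&\leq C\norm{k}_{L^1}\bigl(\norm{g_1-g_2}_{C^0}+\norm{e_j(g_1)-e_j(g_2)}_{L^\infty}\bigr)\leq C\,d_{\M^s}(g_1,g_2)\,\norm{k}_{L^1},
\eeqas
where I use $L^1$-$L^\infty$ duality for the pairing, Sobolev embedding $H^s\hookrightarrow L^\infty$ to control both the metric coefficients and the $e_j(g)$, and the Lipschitz bound $\norm{e_j(g_1)-e_j(g_2)}_{H^s}\leq Cd_{\M^s}(g_1,g_2)$ obtained by integrating \eqref{est:Pg2} applied to the fixed tensor $h_j$. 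Consequently each $\alpha_i(\cdot,k)$ is Lipschitz on $W$ with constant proportional to $\norm{k}_{L^1}$. The second inequality now follows from the telescoping identity
\beqs
P_{g_1}(k)-P_{g_2}(k)=\sum_i(\alpha_i(g_1,k)-\alpha_i(g_2,k))\,e_i(g_1)+\sum_i\alpha_i(g_2,k)\,(e_i(g_1)-e_i(g_2))
\eeqs
by estimating each sum in $H^s$, using $\norm{e_i(g)}_{H^s}\leq C$, the just-proved bound $\abs{\alpha_i(g_2,k)}\leq C\norm{k}_{L^1}$ (from the first part of the lemma), and the $H^s$-Lipschitz bound on $g\mapsto e_i(g)$.

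The main obstacle is obtaining a quantitative Lipschitz dependence of the coefficients $\alpha_i(g,k)$ with constant proportional to $\norm{k}_{L^1}$ rather than $\norm{k}_{L^2}$. Reducing to a fixed basis of $H(g_0)$ turns this into a finite-dimensional linear-algebra question, and the $L^1$-$L^\infty$ pairing between the rough tensor $k$ and the smooth basis elements $e_j(g)$ is the point at which a genuine improvement over the $L^2$-$L^2$ strategy of Proposition \ref{Prop:S} is extracted; the two-dimensional Sobolev embedding $H^s\hookrightarrow L^\infty$ is what makes this pairing available with uniform constants over $W$.
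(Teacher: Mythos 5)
Your proof is correct, and the core idea is the same as the paper's: reduce the projection to a Lipschitz-continuous finite basis of $H(g)$ obtained by pushing a fixed basis of $H(g_0)$ forward via $P_g$, and then exploit the $L^1$-$L^\infty$ pairing between the rough tensor $k$ and the smooth, uniformly $H^s$-bounded basis elements (with $H^s\hookrightarrow L^\infty$ for $s>1$). The only difference is one of presentation: the paper (Lemma~\ref{lemma:basis}) Gram-Schmidt-orthonormalises the pushed-forward basis to get a Lipschitz family of $L^2(M,g)$-orthonormal bases $\Theta^j(g)$, so that $P_g(k)=\sum_j\langle k,\Theta^j(g)\rangle_{L^2(M,g)}\Theta^j(g)$ and both estimates of the lemma drop out at once; you keep the non-orthonormal basis and invert the Gram matrix $A(g)$ instead, together with a separate duality argument for the first inequality. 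These are equivalent, and both hinge on exactly the same $L^1$-$L^\infty$ observation.
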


We remark that there is no need to specify with respect to which metric $g\in W$ the $L^1$ norm is computed as all metrics in $W$ are equivalent. 

We prove these refined estimates on $P_g$ using the following consequence of Proposition \ref{Prop:S}

\begin{lemma}\label{lemma:basis}
For any $g_0\in \M$ and any $s>3$ there exists a neighbourhood $W$ of $g_0$ in $\M^s$ and a constant $C<\infty$ so that we can assign to each metric $g$ in $W$ an $L^2(M,g)$-orthonormal basis 
$\{\Th^j(g)\}_{j=1}^{6\gamma-6}$ of $H(g)$ satisfying 
$$\norm{\Th^j(g_1)-\Th^j(g_2)}_{H^s} \leq C\cdot d_{\M^s}(g_1,g_2), \quad g_{i,2}\in W, \quad j=1\ldots 6\gamma-6=\text{dim}(H(g))$$
\end{lemma}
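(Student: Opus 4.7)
\textbf{Proof plan for Lemma \ref{lemma:basis}.}
The plan is to start from a fixed smooth basis of $H(g_0)$, transport it to $H(g)$ by the orthogonal projection $P_g$, and then symmetrically orthonormalise the resulting family using the Gram matrix. Concretely, fix any $L^2(M,g_0)$-orthonormal basis $\{\Phi^j_0\}_{j=1}^{6\gamma-6}$ of $H(g_0)$, consisting of smooth tensors (which is possible since $g_0\in\M$). For $g$ in a neighbourhood $W$ of $g_0$ in $\M^s$ provided by Proposition \ref{Prop:S}, set
\beqs
\tilde\Theta^j(g):=P_g(\Phi^j_0)\in H(g),\qquad j=1,\ldots,6\gamma-6.
\eeqs

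The first step is to establish that each $g\mapsto \tilde\Theta^j(g)$ is Lipschitz from $W$ to $H^s$. This follows from the mean-value bound obtained by integrating estimate \eqref{est:Pg2} of Proposition \ref{Prop:S} along a $C^1$ path in $W$ joining $g_1$ to $g_2$: with $k=\Phi^j_0$ fixed, one obtains $\norm{\tilde\Theta^j(g_1)-\tilde\Theta^j(g_2)}_{H^s}\leq C\cdot d_{\M^s}(g_1,g_2)\cdot\norm{\Phi^j_0}_{L^2}$. The same argument (applied as in the last step of the proof of Proposition \ref{Prop:S} above) shows that $\{\tilde\Theta^j(g)\}_j$ remains linearly independent for $g$ sufficiently close to $g_0$, as $P_g|_{H(g_0)}\to \id$ in operator norm as $g\to g_0$; shrinking $W$ if necessary, $\{\tilde\Theta^j(g)\}_j$ is a basis of $H(g)$ for every $g\in W$.

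Next, consider the Gram matrix
\beqs
G(g)_{ij}:=\langle \tilde\Theta^i(g),\tilde\Theta^j(g)\rangle_{L^2(M,g)}.
\eeqs
Since $s>3$ implies $H^s\hookrightarrow C^1$ on the 2-manifold $M$, the bilinear form $(h_1,h_2,g)\mapsto \int_M g^{ik}g^{jl}(h_1)_{ij}(h_2)_{kl}\,dv_g$ depends Lipschitz-continuously on each argument on bounded sets. Combined with the Lipschitz estimate on $\tilde\Theta^j$ just obtained, $G:W\to \R^{(6\gamma-6)\times(6\gamma-6)}$ is Lipschitz. At $g=g_0$ we have $G(g_0)=I$, so shrinking $W$ again we may assume $G(g)$ is symmetric positive-definite with spectrum in, say, $[\tfrac12,2]$, and hence $g\mapsto G(g)^{-1/2}$ is Lipschitz (as the matrix square root is smooth on the cone of positive-definite matrices near $I$).

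Finally, define
\beqs
\Theta^j(g):=\sum_{i=1}^{6\gamma-6}\bigl[G(g)^{-1/2}\bigr]_{ij}\,\tilde\Theta^i(g).
\eeqs
A direct computation shows that $\{\Theta^j(g)\}_j$ is an $L^2(M,g)$-orthonormal basis of $H(g)$, and the Lipschitz dependence of $G(g)^{-1/2}$ on $g$ together with the $H^s$-Lipschitz dependence of each $\tilde\Theta^i(g)$ (which also furnishes a uniform $H^s$ bound) yields the desired estimate $\norm{\Theta^j(g_1)-\Theta^j(g_2)}_{H^s}\leq C\cdot d_{\M^s}(g_1,g_2)$. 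The only mildly delicate point is the matrix square root step, but near the identity this is entirely standard. All other ingredients reduce to Proposition \ref{Prop:S} and routine Sobolev embedding.
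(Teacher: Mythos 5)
Your argument is correct and follows essentially the same route as the paper's: both transport a fixed $L^2(M,g_0)$-orthonormal basis of $H(g_0)$ into $H(g)$ via $P_g$, invoke Proposition \ref{Prop:S} for Lipschitz dependence, and then orthonormalise. The only (cosmetic) difference is the final step, where you use the symmetric (L\"owdin) orthonormalisation via $G(g)^{-1/2}$ while the paper uses Gram--Schmidt; both yield the same Lipschitz conclusion.
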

Lemma \ref{lemma:L1} then immediately follows from $P_g(k)=\sum_j\langle k,\Th^j(g)\rangle_{L^2(M,g)} \Th^j(g).$

\begin{proof}[Proof of Lemma \ref{lemma:basis}]
Let $g_0\in\M^s$ and let $W$ be the neighbourhood of $g_0$ given by Proposition \ref{Prop:S}. We fix any $L^2(M,g_0)$-orthonormal basis $\Th^j(g_0)$, $j=1\ldots 6\gamma-6$, of 
$H(g_0)$ and define 
$$\Th_0^j(g):=P_g(\Th^j(g_0)).$$
According to Proposition \ref{Prop:S} this auxiliary family of tensors depends continuously on $g$, 
$$\norm{\Th_0^j(g_1)-\Th_0^j(g_2)}_{H^s}\leq C\cdot d_{\M^s}(g_1,g_2)$$ so that 
$\{\Th_0^j\}$ is a basis of $H(g)$ provided the neighbourhood $W$ is chosen sufficiently small. Furthermore, as the map asigning to each metric $g$ the inner products 
$$g\mapsto \langle \Th_0^j(g),\Th_0^k(g)\rangle_{L^2(M,g)}$$ is also Lipschitz-continuous on $W$,  
so are the coefficients $a_i^j$ of the orthonormal basis $\Th^j(g)=\sum_{i=1}^j a_i^j(g)\Th_0^i(g)$ of $H(g)$ obtained by Gram-Schmidt orthogonalisation and thus the basis itself. 
\end{proof}

\section{Existence of solutions}\label{section:existence}
In this section we establish the existence of weak solutions to \eqref{1} 
satisfying the properties claimed in Theorem \ref{thm:1}, in particular existing for all times 
unless the metric component degenerates in moduli space. 
As a first step, we prove the following short-time existence result

\begin{lemma}
\label{lemma:short-time}
For any initial metric $g_0\in \M$ and any initial map $u_0\in C^\infty(M,N)$ there exists a smooth solution $(u,g)$ of equation \eqref{1} to initial data $(u(0),g(0))=(u_0,g_0)$
defined on an interval $[0,T)$, $T=T(u_0,g_0)>0$.
\end{lemma}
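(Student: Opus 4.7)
The plan is a decoupled Picard-type contraction argument: I would alternately solve \eqref{1a} with the metric given and \eqref{1b} with the map given, and show that the resulting iteration contracts on a short time interval. The structural observation which makes this work is that the velocity $\frac{\eta^2}{4}\text{Re}(P^H_g(\Phi(u,g)))$ driving \eqref{1b} is valued in the horizontal space $H(g)$ of real parts of holomorphic quadratic differentials, which has real dimension $6\gamma-6$; so for a fixed $u$ equation \eqref{1b} is essentially a finite-dimensional ODE to which Propositions \ref{Prop:S} and \ref{Prop:horizontal} apply directly.

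\textbf{Function space and iteration.} Fix $s>3$ large and some $\alpha\in(0,1)$, and let $W\subset\M^s$ be the neighbourhood of $g_0$ provided by Proposition \ref{Prop:S}. For $T>0$ small I would work on a small closed ball, inside the complete metric space
\beqs
X_T := \{(u,g)\in C^0([0,T],C^{2,\alpha}(M,\R^K))\times C^1([0,T],\M^s):\ u(0)=u_0,\ g(0)=g_0,\ g([0,T])\subset W\},
\eeqs
and define $F:X_T\to X_T$, $F(u,g)=(\tilde u,\tilde g)$, as follows. For the \emph{map step} I let $\tilde u$ be the unique short-time smooth solution to $\pt\tilde u=\tau_{g(t)}(\tilde u)$, $\tilde u(0)=u_0$; since $g\in C^1([0,T],\M^s)$ is $C^{2,\alpha}$ in space and Lipschitz in time, and the $N$-valued constraint is handled via the isometric embedding $N\hookrightarrow\R^K$ and its nearest-point projection, classical Schauder theory for quasilinear parabolic systems applies and yields Lipschitz dependence of $\tilde u$ on $g$. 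For the \emph{metric step} I let $\tilde g$ solve $\ddt\tilde g=\frac{\eta^2}{4}\text{Re}(P^H_{\tilde g}(\Phi(u,\tilde g)))$ with $\tilde g(0)=g_0$; expanding in the Lipschitz basis $\{\Th^j(\tilde g)\}_{j=1}^{6\gamma-6}$ of $H(\tilde g)$ from Lemma \ref{lemma:basis} converts this into a first-order ODE in $\R^{6\gamma-6}$ whose right-hand side is Lipschitz in $\tilde g\in W$ with constant depending on $\norm{u}_{C^{2,\alpha}}$, by Proposition \ref{Prop:S} and the smoothness of $\Phi$ in $(u,g)$. Classical ODE theory then supplies $\tilde g$ on $[0,T]$, while Proposition \ref{Prop:horizontal} ensures the horizontal curve $\tilde g$ stays $H^s$-bounded and inside $W$ for $T$ small.

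\textbf{Contraction, smoothness, and main obstacle.} For $T$ small enough, combining Proposition \ref{Prop:S} with parabolic Schauder estimates and the explicit Lipschitz dependence in the finite-dimensional ODE makes $F$ a strict contraction on $X_T$; its unique fixed point is a solution of \eqref{1}, and a standard parabolic bootstrap on \eqref{1a} together with iterated differentiation of the ODE \eqref{1b} upgrades it to $C^\infty$. The main obstacle is not the short-time existence of either equation in isolation---each is standard---but the fact that the non-local projection $P^H_g$ couples the two unknowns in a quantitatively non-trivial way: the map equation needs a Lipschitz-in-time, $C^{2,\alpha}$-in-space metric, while the metric equation involves $P^H_g$ evaluated at the unknown $g$. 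The entire purpose of Section \ref{section:horizontal} is to quantify the regularity of $g\mapsto P^H_g$ in $H^s$, and it is exactly those estimates (\eqref{est:Pg1}, \eqref{est:Pg2}, and \eqref{est:Ck}) that close the contraction estimate here.
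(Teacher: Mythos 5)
Your Picard-iteration strategy is essentially the same as the paper's (the paper uses a Gauss-Seidel ordering --- define $g_i$ from $u_{i-1}$, then $u_i$ from $g_i$ --- whereas you decouple fully via $F(u,g)=(\tilde u(g),\tilde g(u))$; this difference is immaterial), and the contraction argument itself, resting on Proposition~\ref{Prop:S}, Lemma~\ref{lemma:basis} and parabolic Schauder theory, is sound.

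The gap is in the final sentence, where you assert that ``a standard parabolic bootstrap on \eqref{1a} together with iterated differentiation of the ODE \eqref{1b} upgrades it to $C^\infty$.'' Your fixed point lives in $C^0([0,T],C^{2,\alpha})\times C^1([0,T],\M^s)$ for one fixed $s>3$, on a maximal interval $[0,T_s)$ which a priori depends on $s$. Iterated time-differentiation of \eqref{1b} gains time regularity for $g$, but not space regularity: the right-hand side $P_g(k(u,g))$ is controlled by Proposition~\ref{Prop:S} only in $H^s$ as long as $g$ is merely known to lie in $\M^s$, so the ODE keeps $g(t)$ in $\M^s$ but does not push it into $\M^{s'}$ for $s'>s$; correspondingly the parabolic bootstrap for $u$ is capped by the $H^s$ spatial regularity of the coefficients in $\tau_g$. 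To reach $C^\infty$ in space one must show that the existence time $T_s$ is independent of $s$, so that the single curve $g$ lies in $C^1([0,T),\M^s)$ for every $s>3$ simultaneously, hence each $g(t)$ is smooth. The paper explicitly flags this as the key step and proves it by contradiction: if $T_{s_2}<T_{s_1}$, uniqueness forces the two solutions to coincide up to $T_{s_2}$; the horizontal-curve estimates extend $g$ Lipschitz onto the closed interval $[0,T_{s_2}]$; Lemma~\ref{lemma:pull-back} writes $g(T_{s_2})=f^*\bar g$ with $\bar g$ smooth; restarting the flow from the pulled-back smooth data and using the diffeomorphism-equivariance of \eqref{1} extends the $\M^{s_2}$-solution past $T_{s_2}$, a contradiction. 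This argument is essential and is missing from your proposal; without it the fixed point you construct is only an $H^s$-regular (in space) solution, not the smooth solution the lemma asserts.
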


\begin{proof}[Proof of Lemma \ref{lemma:short-time}]
We first recall that the metric evolves by 
\beq \label{eq:ddtg2} 
\frac{dg}{dt}=\frac{\eta^2}{4}Re(P_g^H(\Phi(u,g)))=\frac{\eta^2}{4}P_g(k(u,g)),\eeq
where $k(u,g)=Re(\Phi(u,g))$, compare \eqref{1b} and \eqref{eq:PgH}.

To simplify notations and without loss of generality, we shall from now on consider the flow with coupling constant $\eta=2$.
We also remark that computing the variation 
\beqs
\dds  E(u,g+sl)\vert_{s=0}=-\frac14 \langle Re(\Phi(u,g)), l\rangle_{L^2} \text{ for all } l\in Sym^2(M)
\eeqs
in local coordinate charts, allows us to write the real part of the Hopf-differential in general (not necessarily conformal) coordinate charts as  
\beqs
k(u,g)=Re(\Phi(u,g))=2u^* G_N-2e(u,g)g,\eeqs
$e(u,g)=\frac12 \abs{\na u}_g^2=\frac12 g^{ij}\partial_{x_i}u\cdot \partial_{x_j}u$ the energy density. 

Using the results of the previous section we can consider equation \eqref{1} as a system consisting of a semilinear parabolic PDE coupled with a differential equation on a Banach manifold $\M^s$ that is defined by a locally Lipschitz continuous vector field.
In such a setting we obtain the existence of a classical solution on a short time interval using a standard iteration argument, which, for the sake of completeness, we outline in the appendix. Given any 
$(u_0,g_0)\in C^{2,\alpha}(M,N)\times \M$ and any number $s>3$ we obtain a solution 
$$(u,g)\in C^{2,1,\alpha}([0,T_s)\times M,N)\times  C^1([0,T),\M^s)$$
of \eqref{1}, defined on a maximal interval $[0,T_s)$. This interval might a priori depend not only on $(u_0,g_0)$ but also on the Banach manifold $\M^s$ on which we solve \eqref{1b}. Indeed, the key step needed to prove that the 
obtained solution $(u,g)$ is actually smooth is to show that this is not the case. So suppose that for some $3<s_1<s_2$ we have $T_{s_1}\neq T_{s_2}$. Since classical solutions of \eqref{1}
are uniquely determined by their initial data, compare section \ref{section:uniqueness}, we remark that the two solutions obtained for the different values of $s$ agree 
for as long as they both exist, that is until time $T_{s_2}<T_{s_1}$. Since the metric component is continuous (as a map into $\M^{s_1}$) up to time $T_{s_1}$ 
there exists a number $\eps>0$ such that the length $\ell(g(t))$ of the shortest closed geodesic of $(M,g(t))$ is no less than $\eps$ on the smaller interval $[0,T_{s_2}]$. 
Using the $H^s$ estimates of Proposition \ref{Prop:horizontal} this allows us to conclude that $g$ is $C^1$ as a curve into 
$\M^{s_2}$ on the closed interval $[0,T_{s_2}]$, compare with the proof of Lemma \ref{lemma:char-sing} below. 

Using Lemma \ref{lemma:pull-back}, we then write $g(T_{s_2})\in\M^{s_2}$ in the form $g(T_{s_2})=f^*\bar g(T_{s_2})$ for an $H^{s_2+1}$ diffeomorphism $f$ and a smooth metric $\bar g\in \M$.
Restarting the flow with the pulled-back initial data $(\bar u(T_{s_2}),\bar g(T_{s_2}))=(u(T_{s_2})\circ f^{-1},(f^{-1})^*g(T_{s_2}))\in C^{2,\alpha}\times \M$ we obtain a solution $(\bar u,\bar g)$ of \eqref{1} in 
$C^{2,1,\alpha}(M\times I)\times C^1(I,\M^{s_2})$
on a time interval $I=[T_{s_2},T_{s_2}+\de)$. But equation \eqref{1} is invariant under the pull-back by diffeomorphisms applied simultaneously to both the map and the 
metric component and solutions of \eqref{1} are unique. Thus the pull-back of $(\bar u,\bar g)$ by $f$ is nothing else than our original solution 
$(u,g)$ so that $g$ is in $C^1([0,T_{s_2}+\de),\M^{s_2})$, leading to a contradiction. 

At this point we are now in a position to argue by a standard bootstrapping argument, using parabolic regularity theory to improve the regularity of $u$, as well as the explicit formula for $P_g$ given in \eqref{def:PH} to analyse higher
order time derivatives of $g$. We obtain that $(u,g)$ is indeed smooth.
\end{proof}
We remark that the results of section \ref{section:horizontal} allow us not only to establish short-time existence of solutions to \eqref{1} but already give the following characterisation of the behaviour
of the metric component at a singular time

\begin{lemma}\label{lemma:char-sing}
Let $(u,g)$ be a smooth solution of \eqref{1} defined (and smooth) on a maximal interval $[0,T_1)$. Then one of the following three statements holds 
\begin{enumerate}
\item[(i)] $T_1=\infty$, or
\item[(ii)] $T_1<\infty$ but as $t\nearrow T_1$ the metrics $g(t)$ converge smoothly to a limit $g(T_1)\in \M$; indeed $g$ can be extended to a Lipschitz continuous curve from the closed interval
$[0,T_1]$ into each of the Banach manifold $\M^s$, $s>3$, or 
\item[(iii)] the metrics degenerate in moduli space at a finite time $T_1$, i.e. $\lim_{t\nearrow T_1} \ell(g(t))=0$.
\end{enumerate}

\end{lemma}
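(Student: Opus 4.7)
The plan is to show the trichotomy is exhaustive by proving that failure of both (i) and (iii) forces (ii). Assume therefore $T_1 < \infty$ and $\liminf_{t \to T_1} \ell(g(t)) > 0$; by continuity of $\ell$ along the smooth curve $g|_{[0,T_1)}$, there exists $\eps > 0$ with $\ell(g(t)) \geq \eps$ for all $t \in [0, T_1)$. The task is then to prove that $g$ extends to a smooth limit $g(T_1) \in \M$, Lipschitz continuously into each Banach manifold $\M^s$, $s > 3$.

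The first step is to control the total $L^2$-length of the horizontal curve $t \mapsto g(t)$ on $[0, T_1)$. Since $E(u(t), g(t))$ is non-increasing and bounded below by $0$, the energy identity \eqref{energy-identity} bounds $\int_0^{T_1} \|\ddt g\|_{L^2(M, g(t))}^2 \, dt$ by $\tfrac{16}{\eta^2} E(u_0, g_0)$, and Cauchy--Schwarz combined with $T_1 < \infty$ then yields a finite bound on $L_{L^2}(g|_{[0, T_1)})$.

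Fix $s > 3$ and let $\theta = \theta(\eps, s)$ be as in Proposition \ref{Prop:horizontal}. I would partition $[0, T_1)$ into finitely many subintervals $[t_j, t_{j+1}]$ each of $L^2$-length at most $\theta$. Starting from the smooth $g(0) = g_0 \in \M^s$ and proceeding inductively, each $g(t_j)$ lies in $\M^s$, so Proposition \ref{Prop:horizontal} applies on the current subinterval and gives $\|\ddt g\|_{H^s} \leq C_j \|\ddt g\|_{L^2(M, g(t))}$; a further Cauchy--Schwarz in time then produces a $(1/2)$-H\"older bound of $g$ into $H^s$, so $g$ extends continuously up to $t_{j+1}$. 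Chaining the finitely many steps produces a continuous extension $g(T_1) \in \M^s$; since $s > 3$ was arbitrary, $g(T_1)$ lies in every $\M^s$ and is therefore smooth, i.e.~in $\M$.

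The remaining and main obstacle is upgrading this H\"older continuity to Lipschitz continuity, because Proposition \ref{Prop:horizontal} controls $\|\ddt g\|_{H^s}$ only in terms of $\|\ddt g\|_{L^2}$, which a priori need not be pointwise bounded in $t$ (only square-integrable). I would handle this via Lemma \ref{lemma:L1}: the now-continuous curve $g([0, T_1])$ is compact in $\M^s$ and hence covered by finitely many neighborhoods on each of which $\|P_g(k)\|_{H^s} \leq C \|k\|_{L^1}$ uniformly in $g$. Combined with the pointwise inequality $|\text{Re}(\Phi(u, g))|_g \leq C\, e(u, g)$ readily obtained from $\text{Re}(\Phi(u,g)) = 2 u^* G_N - 2 e(u, g)\,g$, this yields $\|\text{Re}(\Phi(u,g))\|_{L^1(M, g)} \leq C E(u, g) \leq C E(u_0, g_0)$, and therefore the uniform pointwise bound $\|\ddt g\|_{H^s} \leq C'$ on $[0, T_1)$ that gives the claimed Lipschitz continuity.
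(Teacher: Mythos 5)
Your argument follows the same overall strategy as the paper: finite $L^2$-length from the energy identity, Proposition \ref{Prop:horizontal} to get $H^s$ control of $\ddt g$, and Lemma \ref{lemma:L1} together with the $L^1$ bound on the Hopf differential to upgrade the $C^{1/2}$ modulus of continuity to a Lipschitz one. That part is sound and matches the paper's reasoning.

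However, there is a genuine gap at the very first step. Statement (iii) asserts $\lim_{t\nearrow T_1}\ell(g(t))=0$; its negation (assuming $T_1<\infty$) is $\limsup_{t\nearrow T_1}\ell(g(t))>0$, not $\liminf_{t\nearrow T_1}\ell(g(t))>0$. You assume the latter, which is strictly stronger, and so your proof does not address the a priori possible case $\liminf=0<\limsup$, in which (i), (ii) and (iii) would all fail. Establishing that this case cannot occur is precisely the content of the lemma, so it cannot be assumed away; before you have proved (ii) you have no reason to believe $\ell(g(t))$ has a limit at all. Note that your chaining strategy through all of $[0,T_1)$ relies on $\ell(g(t_j))\geq\eps$ at every subdivision point, which you only get from the stronger $\liminf$ hypothesis.

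The fix is what the paper does: under the correct hypothesis $\limsup_{t\nearrow T_1}\ell(g(t))>\eps>0$, pick a \emph{single} time $t_0<T_1$ with both $\ell(g(t_0))\geq\eps$ (possible since $\limsup>\eps$) and $L_{L^2}(g\vert_{[t_0,T_1)})<\th(\eps,s)$ (possible since the total $L^2$-length is finite). Proposition \ref{Prop:horizontal} requires the lower bound on $\ell$ only at the starting metric $g_0=g(t_0)$, so one application on $[t_0,T_1)$ suffices to obtain the $C^{1/2}$ bound and hence the extension to $T_1$. Smoothness of $g$ on $[0,t_0]$ handles the rest of the interval, and the Lipschitz upgrade via Lemma \ref{lemma:L1} then proceeds as you describe. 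This also makes the chaining through all of $[0,T_1)$ unnecessary.
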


\begin{proof}[Proof of Lemma \ref{lemma:char-sing}]
Assume that $T_1<\infty$ and that the length of the shortest closed geodesics in $(M,g(t))$ does not converge to zero
$$\limsup_{t\nearrow T_1}\ell(g(t))>\eps>0.$$
Then given any number $s>3$ we let $\th=\th(s,\eps)>0$ be the constant of Proposition \ref{Prop:horizontal}. We recall that according to the energy identity \eqref{energy-identity}
the $L^2$-length of the curve $g$ is finite on intervals of finite length. We may thus choose $t_0<T_1$ with $\ell(g(t_0))\geq \eps$ and close enough to $T_1$ such that 
$L_{L^2}(g\vert_{[t_0,T_1)})<\th$. 
Proposition \ref{Prop:horizontal} then implies that $g(t)$ is a Cauchy sequence in $\M^s$ and thus converges to a limit $g(T_1)$ in $\M^s$ as $t\nearrow T_1$. 
Indeed, combining Proposition \ref{Prop:horizontal} with the 
energy identity \eqref{energy-identity} gives $C^{1/2}$-H\"older estimates in time for $g$ considered as map into $\M^s$. 
Moreover, thanks to the 
uniform bound on the energy of $u$ and thus on the $L^1$ norm of the Hopf-differential
$$\norm{k(u,g)}_{L^1}\leq C\cdot\norm{\na u}_{L^2}^2\leq C\cdot E(u,g)\leq C \cdot E(u_0,g_0),$$
the improved estimates on $P_g$ stated in Lemma \ref{lemma:L1} give uniform bounds on  
$\norm{\ddt g(t)}_{H^s}$. Thus $g$ is not only $C^{1/2}$ but indeed Lipschitz continuous with respect to each $H^s$ metric on the \textit{closed} interval 
$[0,T_1]$.
\end{proof}

We remark that the possibility of solutions degenerating in moduli space will be addressed in future work and that here we focus on the analysis of singularities of the second type,
essentially due to the map component becoming singular. 

So let $(u,g)$ be a smooth solution of \eqref{1} on a maximal interval $[0,T)$. Assume that the metrics do not degenerate in moduli space as we approach the singular time 
and thus that $g(t)\to g(T_1)\in\M$ smoothly as $t\nearrow T_1$. We remark that the evolution of the metric component is uniformly controlled, 
\beq \label{est:ddg} \norm{\ddt g}_{H^s}\leq C\norm{k(u,g)}_{L^1}\leq C\cdot \norm{\na u}_{L^2}^2\leq  C\cdot E_0\eeq
for times in an interval of length $\de=\de(g(T_1),s)>0$ not just for the 
one solution $(u,g)$ of \eqref{1} that becomes singular, but also for all solutions evolving from an initial data $(\bar u,g(T_1))$ with energy bounded by $E_0$. 
Thanks to this strong bound on the metric component we can carry out the analysis of the map component of solution to \eqref{1} near singular times using 
methods familiar from the work of Struwe \cite{Struwe85} on the harmonic map flow. 
Since our analysis closely follows the ideas of \cite{Struwe85} we shall omit some details and calculations in the following presentation. We also remark that a similar argument was briefly 
outlined in \cite{Ding-Li-Liu} in the special case of maps from a torus.

Notation: We let $g_1\in\M$ be a fixed metric that should be thought of as a limiting metric of a solution of \eqref{1} at a singular time. 
Then unless indicated otherwise all occurring objects such as norms, operators (like $\Delta$), integrals, balls and so on are to be understood as the corresponding objects on 
the fixed Riemannian surface $(M,g_1)$. 
Furthermore, we denote generic constants (allowed to change from line to line) by $C$ in case they depend only on $g_1$ and $E_0$ and will indicate any dependence on additional 
quantities accordingly. 

Based on \eqref{est:ddg} we henceforth restrict our attention to solutions of \eqref{1} satisfying 
\beq
\label{ass:close}
\norm{g_1-g(t)}_{H^s}\leq \eps_1 
\eeq
for some fixed number $s>3$ and a small $\eps_1=\eps_1(g_1,s)>0$, chosen in particular such that Lemma \ref{lemma:L1} applies on this $\M^s$ neighbourhood of $g_1$.

We first remark that the evolution of the local energy is controlled by
\begin{lemma}\label{lemma:loc-energy} 
For solutions $(u,g)$ of \eqref{1} satisfying \eqref{ass:close} the following local energy bounds hold true for any point
$x\in M$ and any radius $0<r<r_{inj}$ 
\beqs 
E(u(t),B_{r/2}(x))\leq 2 E(u(0),B_r(x))+C\frac{t}{r^2} \eeqs
and
\beqs E(u(t),B_{r}(x))\geq \frac12 E(u(0),B_{r/2}(x))-4 \int_{0}^t\int_M \varphi^2\abs{\pt u}^2 dv dt-C\frac{t}{r^2}. 
\eeqs 
\end{lemma}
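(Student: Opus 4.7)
The strategy is the standard Struwe-type localised energy computation, with the new ingredient being that we must keep track of the contribution of the evolving domain metric, which is however uniformly controlled thanks to \eqref{est:ddg} and Lemma \ref{lemma:L1}.

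The plan is to fix, on the reference surface $(M,g_1)$, a cut-off function $\varphi\in C_c^\infty(B_r(x))$ with $\varphi\equiv 1$ on $B_{r/2}(x)$ and $|\nabla\varphi|_{g_1}\leq C/r$, and to analyse
\beqs
\ddt \int_M \varphi^2\, e(u,g)\, dv_g
=\int_M \varphi^2 g^{ij}\partial_i u\cdot \partial_j(\pt u)\, dv_g +\int_M \varphi^2\,\big[\partial_t(\tfrac12 g^{ij})\partial_iu\cdot\partial_ju\sqrt{\det g}+e(u,g)\partial_t\sqrt{\det g}\big]\,dx.
\eeqs
Using \eqref{1a} together with the fact that $\pt u\in T_uN$ is orthogonal to the second fundamental form, integration by parts in the first integral gives
\beqs
\int_M \varphi^2 g^{ij}\partial_i u\cdot \partial_j(\pt u)\, dv_g=-\int_M\varphi^2|\pt u|^2\,dv_g-2\int_M\varphi\,g^{ij}(\partial_i\varphi)\,\partial_ju\cdot\pt u\,dv_g.
\eeqs
The assumption \eqref{ass:close} makes $g(t)$ uniformly equivalent to $g_1$, while \eqref{est:ddg} together with Lemma \ref{lemma:L1} (or rather the bound on $\|\pt g\|_{H^s}$ with $s>3$, giving a $C^0$ bound via Sobolev embedding) yields $\|\pt g(t)\|_{C^0}\leq C$. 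Thus the metric-evolution terms above are bounded by $C\int_M\varphi^2\, e(u,g)\, dv_g$, while the global energy bound $E(u(t))\leq E_0$ controls $\int_M|\nabla u|^2$ uniformly.

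For the upper bound I would apply Young's inequality in the cross term in the form $|2\varphi\nabla\varphi\cdot\nabla u\cdot\pt u|\leq \varphi^2|\pt u|^2+|\nabla\varphi|^2|\nabla u|^2$, so that the negative $\pt u$ term absorbs the bad half and there remains
\beqs
\ddt \int_M \varphi^2 e(u,g)\,dv_g\leq \int_M|\nabla\varphi|^2|\nabla u|^2\,dv_g+C\int_M\varphi^2 e(u,g)\,dv_g\leq \frac{C}{r^2}E_0+C\int_M\varphi^2 e(u,g)\,dv_g.
\eeqs
Gronwall's inequality, for $t$ in a short interval (say so that $e^{Ct}\leq 2$; the general $t$ case follows by iteration or by adjusting the constant $C$), then gives the first asserted bound $E(u(t),B_{r/2}(x))\leq 2 E(u(0),B_r(x))+Ct/r^2$.

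For the lower bound I would instead use Young's inequality with a weight tilted the other way, $|2\varphi\nabla\varphi\cdot\nabla u\cdot\pt u|\leq 3\varphi^2|\pt u|^2+\tfrac{C}{r^2}|\nabla u|^2$, which gives
\beqs
\ddt \int_M \varphi^2 e(u,g)\,dv_g\geq -4\int_M \varphi^2|\pt u|^2\,dv_g-\frac{C}{r^2}E_0-C\int_M\varphi^2 e(u,g)\,dv_g.
\eeqs
Integrating from $0$ to $t$ and again absorbing the last term by Gronwall (swapping the roles of $B_{r/2}$ and $B_r$, i.e.\ choosing $\varphi\equiv 1$ on $B_{r/2}(x)$ and $\supp\varphi\subset B_r(x)$) yields the second claim. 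The main obstacle is purely book-keeping: one must verify that every term generated by the non-trivial evolution of $g$ is either of the form $C\int\varphi^2 e(u,g)\,dv_g$ (harmless by Gronwall) or scales like $Ct/r^2$ after time integration, which is exactly what the $L^\infty$ bound on $\pt g$ provided by Section \ref{section:horizontal} guarantees.
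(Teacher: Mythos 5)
Your argument follows the paper's proof in all essentials: the same cut-off $\varphi$ with $\varphi\equiv1$ on $B_{r/2}(x)$ and support in $B_r(x)$, the same computation of $\ddt\int\varphi^2 e(u,g)\,dv_g$ by integrating by parts against $\pt u$, the same Young-inequality treatment of the cross-term with different weights for the two directions, and the same use of the uniform $C^0$-bound on $\ddt g$ (coming from \eqref{est:ddg} and Section~\ref{section:horizontal}) to tame the metric-evolution terms. The one place you diverge is in invoking Gronwall to handle the residual $C\int\varphi^2 e\,dv_g$, which forces you into a short-time restriction ($e^{Ct}\leq 2$) that you acknowledge but do not actually repair: iterating would compound the factor $2$, and the constant $C$ is not yours to adjust. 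The paper sidesteps Gronwall entirely by simply bounding $\int\varphi^2 e(u,g)\,dv_g\leq E(u(t),g(t))\leq E_0$, so that this term, like the cross-term, contributes $CE_0\leq C/r^2$ (using $r<r_{inj}$) to the right-hand side and is absorbed into $Ct/r^2$ upon time integration; the factor $2$ in the stated bounds then arises solely from the equivalence of $g(t)$ and $g_1$ under \eqref{ass:close}, not from an exponential factor. With that one-line replacement your sketch becomes a complete proof, valid for all $t$, and coincides with the argument in the paper.
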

\begin{proof}[Sketch of proof]
Given $x\in (M,g_1)$ and $0<r<r_{inj}(M,g_1)$ we let $\varphi\in C_0^\infty(B_r(x),[0,1])$ be a standard cut-off function, i.e.~such that 
$\varphi\equiv 1$ on $B_{r/2}(x)$ and $\abs{\na \varphi}\leq \frac{C}{r}$.
A short calculation shows that for a solution $(u,g)$ of \eqref{1} 
\beqa\label{est:energy}
0&= \int \varphi^2\abs{\pt u}^2 \,dv-\int\varphi^2\pt u\cdot \Delta_{g(t)} u\,dv\\
&=\int  \varphi^2\abs{\pt u}^2 \,dv+\frac12\ddt\int\varphi^2\abs{\na u}_{g(t)}^2\,dv+R(u(t),g(t))
\eeqa
with an error term that is bounded by
$$\abs{R(u,g)}\leq (\frac{C}{r^2}+C\norm{\ddt g}_{C^0})\cdot E(u,B_{r}(x))+\frac18\int_M\varphi^2\abs{\pt u}^2 dv.$$
Since $\norm{\ddt g}_{C^0}$ is uniformly bounded, this estimate integrates to give an upper and a lower bound on 
$\int\varphi^2\abs{\na u}_{g(t)}^2-\int\varphi^2\abs{\na u(0)}_{g(0)}$. Combined with the fact that
$\frac12 g\leq \tilde g\leq 2g$ for all $g,\tilde g$ satisfying \eqref{ass:close}, we obtain the claims of Lemma \ref{lemma:loc-energy}. 
\end{proof}
An important consequence of the previous calculation is
\begin{cor}
Suppose $(u,g)$ is a smooth solution of \eqref{1} defined on a maximal interval $[0,T_1)$ for which \eqref{ass:close} is satisfied. Then for any $\eps_0>0$
the set of points 
\beqs S:=\{x\in M: \limsup_{t\nearrow T_1} E(u(t),B_R(x))\geq \eps_0 \text{ for all }\, R>0\},
\eeqs
is finite.  
\end{cor}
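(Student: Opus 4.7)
The plan is the standard Struwe-type concentration argument: bound the cardinality of $S$ by $4E_0/\eps_0$ via the upper energy bound of Lemma \ref{lemma:loc-energy}, after using the flow's almost-monotonicity in time to convert concentration of energy near the singular time $T_1$ into concentration at a fixed earlier time, where the total energy is controlled by $E_0$.

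More concretely, I would proceed by contradiction: suppose $S$ contains $N+1$ distinct points $x_1,\ldots,x_{N+1}$, and set
$$r := \tfrac{1}{4}\min_{i\neq j} d(x_i,x_j)>0,$$
so that the geodesic balls $B_r(x_i)$ are pairwise disjoint in $(M,g_1)$, and $r<r_{\rm inj}$ after possible shrinking. The defining property of the singular set gives, for each $i$, a sequence $t_i^k\nearrow T_1$ with $E(u(t_i^k),B_r(x_i))\geq \eps_0$. I now apply the upper bound from Lemma \ref{lemma:loc-energy}, restarted at an arbitrary intermediate time $t_0$ close to $T_1$: on the interval $[t_0,T_1)$ one obtains
$$E(u(t),B_{r/2}(x_i))\leq 2 E(u(t_0),B_r(x_i))+C\,\frac{t-t_0}{r^2}.$$
(That restart is legitimate since \eqref{ass:close} holds throughout, so the proof of Lemma \ref{lemma:loc-energy} goes through verbatim with $t_0$ in place of $0$.) Taking $t=t_i^k\nearrow T_1$ along the sequence, and using $\limsup$ on the left, yields
$$\eps_0\leq 2\,E(u(t_0),B_r(x_i))+C\,\frac{T_1-t_0}{r^2}.$$

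Now I pick $t_0<T_1$ so close to $T_1$ that $C(T_1-t_0)/r^2\leq \eps_0/2$, i.e.\ $t_0\geq T_1-r^2\eps_0/(2C)$; this is possible because $r$ is already fixed by the hypothetical points. The displayed inequality then gives $E(u(t_0),B_r(x_i))\geq \eps_0/4$ for every $i=1,\ldots,N+1$. Summing over the disjoint balls and invoking the a priori energy bound $E(u(t_0),M)\leq E(u_0,g_0)=:E_0$ (guaranteed by \eqref{energy-identity} and property (iii) of Theorem \ref{thm:1}, or simply by $E(u(t),g(t))\leq E_0$ on any smooth interval), we conclude
$$(N+1)\,\frac{\eps_0}{4}\leq \sum_{i=1}^{N+1}E(u(t_0),B_r(x_i))\leq E_0,$$
so that $N+1\leq 4E_0/\eps_0$. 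In particular $|S|\leq 4E_0/\eps_0$ and $S$ is finite.

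The only subtle step is ensuring that the upper energy estimate really can be applied with initial time $t_0$ rather than $0$; this is routine since the proof of Lemma \ref{lemma:loc-energy} only uses the structure of \eqref{1} and the uniform bounds $\tfrac12 g_1\leq g(t)\leq 2g_1$ and $\|\partial_t g\|_{C^0}\leq C$, both of which are time-translation invariant under \eqref{ass:close}. Everything else is bookkeeping.
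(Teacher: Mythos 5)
Your argument is correct and takes essentially the same route as the paper, which dispatches the corollary in a single sentence after the statement by remarking that the local energy estimate forces concentration for \emph{all} sequences $t\nearrow T_1$, so that disjointness of balls around points of $S$ at a single time near $T_1$ bounds $\#S$ by a multiple of $E_0/\eps_0$. You have correctly unpacked that remark: restart the upper bound of Lemma~\ref{lemma:loc-energy} at a time $t_0$ close to $T_1$ (valid since the proof only uses \eqref{ass:close}), push the concentration back to time $t_0$, and sum over disjoint balls.

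One cosmetic slip worth correcting: you invoke the defining property of $S$ with radius $r$ to produce a sequence $t_i^k$ with $E(u(t_i^k),B_r(x_i))\geq\eps_0$, but then take $\limsup$ on the \emph{left}-hand side $E(u(t),B_{r/2}(x_i))$ of the restarted inequality, which is a different quantity. You should instead use the definition of $S$ with $R=r/2$ (permissible, since it is quantified over all $R>0$) so that the sequence $t_i^k$ satisfies $E(u(t_i^k),B_{r/2}(x_i))\to L\geq\eps_0$, which is exactly what the left side needs. With that change, the chain $\eps_0\leq 2E(u(t_0),B_r(x_i))+C(T_1-t_0)/r^2$, the choice of $t_0$, and the summation over the disjoint $B_r(x_i)$ all go through as you wrote them, yielding $\#S\leq 4E_0/\eps_0$ (the paper's sharper $E_0/\eps_0$ just comes from being slightly more careful with the constants, which is immaterial for the finiteness conclusion).
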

In fact $\#S\leq E_0/\eps_0$, since energy concentrates near points of $S$ not just along a suitable sequence $t_j\nearrow T_1$ but indeed for \emph{all} sequences $t\nearrow T_1$, compare \eqref{est:energy}.

Away from the finite set $S$ we control the map component of the flow using the following lemma which should be seen as the analogue of Lemmas 3.10 and 3.10' of \cite{Struwe85}  
\begin{lemma}\label{lemma:Struwe}
There exists a number $\eps_0>0$ depending only on $g_1$ and $E_0$ such that the following statement holds true. Let $(u,g)$ be a smooth solution of \eqref{1} on an open interval 
$(0,T)$ and assume that \eqref{ass:close} is satisfied.
Let $M'\subseteq M$ be an open set such that there exists a number $R>0$ with
\beq \label{ass:energy} 
E(u(t),B_R(x))\leq \eps_0 \quad\text{ for all } (x,t)\in M'\times (0,T).\eeq
Then the parabolic H\"older-norms of $u$ and its spatial derivatives (upto order $s-2$) are bounded uniformly on the sets $[\tau,T]\times M'$, $\tau>0$, with bounds depending only
on $\tau$, $R$, $g_1$, $T$, $s$, $M'$ and the energy bound $E_0$.
\end{lemma}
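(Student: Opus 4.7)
The plan is to adapt Struwe's small-energy regularity argument for the harmonic map flow to the present coupled setting. The key simplification afforded by the previous sections is that, under hypothesis \eqref{ass:close} with $s>3$, the metric $g(t)$ is $C^1$-close to the fixed background $g_1$ and, by Lemma \ref{lemma:L1} combined with $\norm{k(u,g)}_{L^1}\leq C E_0$, its time derivative is uniformly bounded in $H^s$ (hence in $C^{s-2}$) by a constant depending only on $g_1$ and $E_0$. Consequently $g(t)$ may be treated as a smoothly varying background metric whose geometry (injectivity radius, Sobolev constants, coefficients of $\Delta_{g(t)}$) is comparable to that of $g_1$ throughout the interval.

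First I would derive a Bochner-type inequality for the energy density $e(u,g)=\tfrac12\abs{\na u}_g^2$ using $\pt u=\tau_g(u)$; a direct computation, keeping track of the extra term produced by $\pt g$ acting on $g^{ij}\dxi u\cdot\dxj u$, yields a differential inequality of the form
\beqs
(\pt-\Delta_{g(t)})e\leqs C_0\,e^2+C_1(\norm{\pt g}_{C^0}+1)\,e
\eeqs
with $C_0,C_1$ depending only on $g_1$ and the geometry of $N$ (through the second fundamental form $A$). The second term is lower order and uniformly controlled by the bound on $\pt g$.

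Next, on cylinders $B_R(x)\times(t_0,t_0+R^2)\subset M'\times(0,T)$, I would run the parabolic Moser iteration applied to $e$: multiply the Bochner inequality by $\varphi^2 e^{p-1}$ for appropriate cut-offs and exponents, integrate, and use the Sobolev embedding $H^1\hookrightarrow L^q$ on the surface together with the local energy bound \eqref{ass:energy} to absorb the dangerous quartic term $e^2$. This is the step where the threshold $\eps_0=\eps_0(g_1,E_0)$ gets fixed: it must be small enough that $\eps_0$ times the Sobolev constant can be absorbed on the left. The output is an $L^\infty$ bound $\sup_{[\tau,T]\times M'}e\leqs C(\tau,R,g_1,E_0)$, essentially by the same interpolation-Moser scheme as in Lemmas~3.10/3.10' of \cite{Struwe85}; the local energy decay provided by our Lemma \ref{lemma:loc-energy} supplies the base step for the iteration after waiting a time $\tau>0$.

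Once $\abs{\na u}$ is uniformly bounded locally, the equation rewrites as $\pt u-\Delta_{g(t)}u=-A_{g(t)}(u)(\na u,\na u)$ with bounded right-hand side and coefficients smooth in space (with $H^s$, hence $C^{s-2}$, regularity) and Lipschitz in time. Standard interior parabolic Schauder estimates then give $C^{1,\alpha}$ control of $u$ on $[\tau,T]\times M''$ for $M''\Subset M'$, and a routine bootstrap — differentiating the equation spatially and using that each spatial derivative of $g$ up to order $s-1$ is uniformly bounded by Proposition \ref{Prop:horizontal} — produces the claimed estimates on spatial derivatives of $u$ up to order $s-2$.

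The main obstacle, and the point where this proof genuinely departs from the harmonic map flow case, is the handling of the $t$-dependence of $\Delta_{g(t)}$ and of the inner products $|\cdot|_{g(t)}$ in the Bochner computation and its iteration; every integration-by-parts produces an error involving $\pt g$. These errors are absorbable only because of the $L^\infty_t H^s_x$ control on $\pt g$ obtained via Lemma \ref{lemma:L1}, which in turn rests on the fact that the Hopf-differential belongs to $L^\infty_t L^1_x$ with a norm controlled by the total energy $E_0$. Once this uniform control is invoked, the remainder of the argument proceeds along classical lines and I would therefore refer to \cite{Struwe85} for the detailed iteration and bootstrap.
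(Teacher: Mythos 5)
Your strategy departs from the paper's and, as sketched, leaves a gap at the central absorption step. The paper does not prove Lemma~\ref{lemma:Struwe} via a Bochner inequality for $e$ followed by parabolic Moser iteration. It follows Struwe directly: differentiate \eqref{1a} in time, test against $\varphi^2\pt u$, and establish the closed differential inequality \eqref{est:ddtdtu} for $\int\varphi^2|\pt u|^2$. The critical term $\int\varphi^2|\pt u|^2|\nabla u|^2$ is absorbed using $W^{1,1}\hookrightarrow L^2$ together with the Ding--Tian interpolation \eqref{est:W14}, and this is exactly where the smallness $\eps_0$ enters. The resulting bound on $\sup_{[\tau,T)}\int\varphi^2|\pt u|^2$ feeds into the $H^2$ estimate \eqref{est:H2}, which gives $W^{1,p}$ bounds for all $p$, and the parabolic H\"older estimates then follow from $L^p$-theory applied to $\pt u - \Delta_g u = A_g(u)(\na u,\na u)\in L^p$, not from Schauder with an $L^\infty$ right-hand side. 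In particular, Struwe's Lemmas~3.10/3.10$'$, which you cite as an ``interpolation--Moser scheme'', are proved by precisely this test-function argument and contain no Moser iteration on a Bochner inequality, so referring to them does not justify the step you are deferring.

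The gap in your sketch is the Moser absorption of the critical term $e^2$. In two dimensions $e\in L^\infty_tL^1_x$ is the exact borderline for such an iteration to close: multiplying the Bochner inequality by $\varphi^2 e^{p-1}$ produces $\int\varphi^2 e^{p+1}$, and estimating this by H\"older would require $e$ in a space strictly better than $L^1$, which is not available a priori. The phrase ``Sobolev embedding $H^1\hookrightarrow L^q$ together with the local energy bound'' does not deliver this; what is actually required is a small-energy interpolation in which $\eps_0$ multiplies the top-order quantity, i.e.\ the Ding--Tian estimate $\int\varphi^2|\na u|^4\leq C\,E(u,B_r)\big[\tfrac{1}{r^2}E(u,B_r)+\int\varphi^2|\tau(u)|^2\big]$ recorded as \eqref{est:W14}, or a parabolic analogue. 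With that inserted and the constants tracked, the Bochner/Moser route can plausibly be closed and your Schauder bootstrap afterwards would give the claimed bounds (limited to order $s-2$ by the $H^s$ spatial regularity of $g$, as you correctly note); but as written the key small-energy mechanism has not been identified and the absorption step fails.
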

\begin{rmk}\label{remark:Struwe}
If the initial map $u(0)$ is smooth on a neighbourhood of $M'$ then 
the above result can be extended to give bounds on the H\"older norms of $u\vert_{M'}$ and its spatial derivatives on $M'$ up to time $t=0$, now with bounds depending additionally on $u(0)$, compare with remark 
3.11 and 3.11' of \cite{Struwe85}.
\end{rmk}

\begin{rmk}\label{remark:higher-reg}
Because of the non-local nature of the projection operator $P_g$ these estimates on $u\vert_{M'}$ allow us to improve the regularity of $g\vert_{M'}$ from the a priori known $C^{0,1}$ 
dependence on time only in case $M= M'$. 
For $M'\neq M$ we can improve the bounds of Lemma \ref{lemma:Struwe} to give $C^{1,\alpha}$ bounds in time on $u\vert_{M'}$ and its spatial derivatives while for $M=M'$, i.e.~away from singluar 
\textit{times}, a bootstrapping argument gives estimates on any $C^k$ norm (in space and time) of $(u,g)$ in terms of the quantities specified in Lemma \ref{lemma:Struwe}.
\end{rmk}

\begin{proof}[Proof of Lemma \ref{lemma:Struwe}]
For the proof of this lemma we follow largely the ideas of \cite{Struwe85}. We make use of the well known interpolation estimate, see e.g.~\cite{Ding-Tian}
\begin{lemma} \label{lemma:H2}
There are numbers $\eps_0>0$ and $C<\infty$ (depending on $(M,g_1)$ and the target manifold) such that for all maps $u\in H^2(M,N)$, a bound on the local energy of 
\beqs
E(u,B_{r}(x))\leq \eps_0\eeqs
implies an $H^2$-bound of the form
\beq \label{est:H2}
\int \varphi^2\abs{\na^2 u}^2 dv\leq \frac{C}{r^2} E(u,B_{r}(x))+C\int \varphi^2 \abs{\tau(u)}^2dv,\eeq
as well as an estimate of
\beq \label{est:W14}
\int \varphi^2\abs{\na u}^4 dv\leq C E(u,B_{r}(x))\cdot \bigg[\frac{1}{r^2} E(u,B_{r}(x))+\int \varphi^2 \abs{\tau(u)}^2dv\bigg].\eeq
Here $\varphi\in C^\infty_0(B_r(x))$ denotes a cut-off function.
\end{lemma}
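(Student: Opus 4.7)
The plan is to derive the two estimates in parallel, treating them as coupled inequalities which decouple once the smallness assumption $E(u,B_r(x))\leq\eps_0$ is used to absorb a critical term. Throughout I view $u:M\to N\hookrightarrow\R^K$ so that $\tau(u)=\Delta_{g_1} u+A(u)(\nabla u,\nabla u)$ with $\abs{A(u)(\nabla u,\nabla u)}\leq C\abs{\nabla u}^2$ by compactness of $N$.

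First I would establish the Bochner-type bound
\beqs
\int \varphi^2\abs{\nabla^2 u}^2\,dv\leq C\int \varphi^2\abs{\Delta_{g_1} u}^2\,dv+\frac{C}{r^2}E(u,B_r(x))
\eeqs
by two integration by parts on $\int \varphi^2 \nabla_i\nabla_j u\cdot\nabla^i\nabla^j u\,dv$, commuting derivatives (which produces a curvature term controlled by $E$), and absorbing the cross terms involving $\nabla\varphi$ via Young's inequality at the cost of a $\tfrac{1}{r^2}E$-contribution. Using $\abs{\Delta_{g_1} u}^2\leq 2\abs{\tau(u)}^2+C\abs{\nabla u}^4$ then yields
\beqs
\int\varphi^2\abs{\nabla^2 u}^2\,dv\leq C\int\varphi^2\abs{\tau(u)}^2\,dv+C\int\varphi^2\abs{\nabla u}^4\,dv+\frac{C}{r^2}E(u,B_r(x)).
\eeqs

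Next I would control the quartic term by the Gagliardo–Nirenberg inequality on the surface $(M,g_1)$: applied to $f=\varphi\abs{\nabla u}$ (which is compactly supported in $B_r(x)$) in dimension two this gives $\|f\|_{L^4}^4\leq C\|f\|_{L^2}^2\|\nabla f\|_{L^2}^2$, hence
\beqs
\int\varphi^2\abs{\nabla u}^4\,dv\leq C\,E(u,B_r(x))\cdot\Bigl(\int\varphi^2\abs{\nabla^2 u}^2\,dv+\frac{1}{r^2}E(u,B_r(x))\Bigr).
\eeqs
Here the term $\int\abs{\nabla\varphi}^2\abs{\nabla u}^2\,dv$ that arises from $\nabla f$ is estimated by $r^{-2}E$.

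Now comes the decoupling. Substituting the Gagliardo–Nirenberg estimate into the Bochner bound, the quartic term contributes $C\,E(u,B_r(x))\int\varphi^2\abs{\nabla^2 u}^2\,dv$, and the smallness assumption $E(u,B_r(x))\leq\eps_0$ with $\eps_0$ chosen so that $C\eps_0\leq\tfrac12$ lets us absorb this back into the left hand side, yielding \eqref{est:H2}. Feeding \eqref{est:H2} back into the Gagliardo–Nirenberg inequality produces \eqref{est:W14}.

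The main technical point — rather than a serious obstacle — is keeping the Bochner step honest: the commutator $[\nabla_i,\nabla_j]$ acting on $\nabla u$ introduces the Gauss curvature of $(M,g_1)$, which is bounded and therefore only contributes a $C\cdot E$ correction; and the second fundamental form $A$ of $N$ must be differentiated along $u$, but since $N$ is smooth and compact the resulting pointwise bounds are uniform. Once these curvature/target-geometry terms are absorbed into the constant $C$, the only genuinely nonlinear step is the absorption enabled by $\eps_0$, which is precisely the role played by the small-energy hypothesis in Struwe's original $\eps$-regularity argument.
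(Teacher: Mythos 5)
Your proof is correct and is essentially the standard argument; the paper itself gives no proof of this lemma but simply cites it as a "well known interpolation estimate" from Ding--Tian (and the same argument goes back to Struwe's small-energy $H^2$ estimate). The three ingredients you use — the identity $\int\varphi^2|\nabla^2 u|^2 \leq C\int\varphi^2|\Delta u|^2 + Cr^{-2}E$ via two integrations by parts and Ricci commutation, the pointwise bound $|\Delta u| \leq |\tau(u)| + C|\nabla u|^2$, and the 2D Ladyzhenskaya inequality $\|f\|_{L^4}^4 \leq C\|f\|_{L^2}^2\|\nabla f\|_{L^2}^2$ applied to $f=\varphi|\nabla u|$ followed by absorption using $C\eps_0\leq\tfrac12$ — are exactly the ones used in the standard proofs. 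Two minor remarks on presentation: (i) the Gauss-curvature correction from the commutator gives $C\int\varphi^2|\nabla u|^2 \leq CE$, and since $r$ is bounded above (say by $r_{\mathrm{inj}}$) this is indeed dominated by the $Cr^{-2}E$ term, as you claim; (ii) the remark at the end about needing to differentiate the second fundamental form $A$ along $u$ is not actually needed in your argument — you only use the pointwise bound $|A(u)(\nabla u,\nabla u)|\leq C|\nabla u|^2$ and never take a derivative of the equation — so you could safely drop that sentence.
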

Let now $(u,g)$, $M'$ and $R>0$ be as in Lemma \ref{lemma:Struwe}, let $x\in M'$ and choose a cut-off function $\varphi\in C^\infty_0(B_{R/2}(x))$.
We first remark that for $\eps_1>0$ sufficiently small, the pointwise bound
$\abs{\tau_g(u)-\tau(u)}\leq C\eps_1 (\abs{\na^2 u}+\abs{\na u}^2)$ implies that 
\eqref{est:H2} and \eqref{est:W14} remain valid with $\tau(u(t))$ replaced by $\tau_{g(t)}(u(t))=\pt u(t)$. 

As in \cite{Struwe85} we now differentiate equation \eqref{1a} in time and multiply with $\varphi^2\pt u$. After carefully analysing all occurring terms, in 
particular the terms due to the time-dependence of the metric, we 
find 
\beqas 
\frac12\ddt \int \varphi^2\abs{\pt u}^2 +\int\varphi^ 2 \abs{\na\pt u}^2  &
\leq (\frac14+C\eps_1)\cdot \int\varphi^ 2 \abs{\na\pt u}^2 + C\int\varphi^2 \abs{\pt u}^2\abs{\na u}^2\\
&\quad +C(R)\big(1+\norm{\frac{dg}{dt}}_{C^2}^2+\int \abs{\pt u}^2\big).
\eeqas
Since we know that the $H^s$ norm, and thus also the $C^2$ norm, of $\ddt g$ is uniformly bounded by a multiple of the energy, we obtain that for $\eps_1=\eps_1(g_1)>0$ chosen small enough
\beq \label{est:ddtdtu} \ddt \int \varphi^2\abs{\pt u}^2 +\int\varphi^ 2 \abs{\na\pt u}^2  \leq C(R)(1+\frac{dE}{dt})+C\int\varphi^2 \abs{\pt u}^2\abs{\na u}^2.\eeq

Using the Sobolev embedding $W^{1,1}\hookrightarrow L^2$ as well as Lemma \ref{lemma:H2}, we find 
\beqas
C\int\varphi^2 \abs{\pt u}^2\abs{\na u}^2&\leq C\norm{\varphi \abs{\pt u}^2}_{L^2}\cdot \big(\int\varphi^2\abs{\na u}^4\big)^{1/2}\\
&\leq C(\norm{\na(\varphi \abs{\pt u}^2)}_{L^1}+\norm{\abs{\pt u}^2}_{L^1})\eps_0^{1/2} \big[C(R)+ \int\varphi^2\abs{\pt u}^2\big]^{1/2}\\
&\leq \int\varphi^ 2 \abs{\na\pt u}^2+C\eps_0\frac{dE}{dt}\int\varphi^2\abs{\pt u}^2+C(R)\cdot (\frac{dE}{dt}+1).\eeqas 
Integrating the resulting estimate \eqref{est:ddtdtu} over any interval $[t_1,t_2]\subset (0,T)$ we thus obtain
$$
\int \varphi^2\abs{\pt u}^2 dv\Big\vert_{t_1}^{t_2}\leq C \eps_0\cdot\sup_{t\in [t_1,t_2]}\int \varphi^2\abs{\pt u}^2 dv+C(R,T).$$

After possibly reducing $\eps_0=\eps_0(E_0,g_1)>0$ so that the factor $C\eps_0\leq \frac12$, we conclude that for any $\tau>0$
\beqas
\sup_{t\in[\tau, T)}\int_{B_{R/4}(x)}\abs{\pt u}^2 dv
& \leq 2\inf_{t\in [0,\tau)}\int \varphi^2\abs{\pt u(t)}^2 dv+ C(R,T)\\
&\leq 2\frac{E_0}{\tau}+C(R,T).\eeqas
Repeating the above argument for a finite cover of balls $B_{R/4}(x_i)$ of $M'$ we obtain a uniform estimate of
$$\int_U\abs{\pt u(t)}^2 dv\leq C \quad \text{ for all } t\in [\tau,T)$$
on a small neighbourhood $U$ of $M'$. According to Lemma \ref{lemma:H2} this implies a bound on $\int_{U'}\abs{\na^2 u(t)}^2 dv$ on a slightly smaller neighbourhood of $M'$. Applying Sobolev's embedding theorem we then conclude 
that for any exponent $p<\infty$ 
$$\int_{U'}\abs{\na u(t)}^p dv \leq C_p\qquad t\in [\tau,T).$$ 
Then as in \cite{Struwe85} we think of \eqref{1a} as an inhomogeneous heat equation 
$$\pt u-\Delta_{g} u=A_g(u)(\na u,\na u)\in L^p(U'\times[\tau,T))$$
allowing us to apply standard regularity results for parabolic equations, see e.g.~\cite{Lieberman}, chapter VII; we get bounds in the parabolic Sobolev-spaces 
$W_p^{2,1}$, and thus in the parabolic H\"older spaces $C^{\alpha}$, on sets $M'\times [\tau',T]$, $\tau'>\tau$. We finally obtain estimates on the H\"older norms of spatial derivatives of $u$ (up to order $s-2$) by a standard bootstrapping argument
which relies on the strong bounds on the velocity of horizontal curves given in Lemma \ref{lemma:L1}.\end{proof}

Let now $(u,g)$ be a smooth solution of \eqref{1} on $[0,T_1)$ whose metric component does not degenerate and thus smoothly converges $g(t)\to g(T_1)=:g_1\in\M$ as $t\nearrow T_1$. 
We first remark that the uniform bounds on the energies $E(u(t))\leq 2E(u(t),g(t))\leq 2E_0$ 
combined with the fact that $\pt u\in L^2([0,T_1)\times M)$ imply that the maps $u(t)$ converges weakly in 
$H^1(M)$ to a limit $u(T_1)$ as $t\nearrow T_1$. Additionally, Lemma \ref{lemma:Struwe} gives uniform H\"older bounds on $u$ and its \textit{spatial} derivatives away from the finite set
$S$ of concentration points so that $u(t)$ converges also in $C^\infty_{loc}(M\setminus S)$.

We now remark that any concentration of energy must be due to the so-called bubbling off of (at least) one harmonic sphere. 
Indeed, the analysis carried out in \cite{Struwe85} (p. 578/9) remains unchanged as long as the local energy estimates and $H^2$ bounds used in \cite{Struwe85} are replaced by Lemmas
\ref{lemma:loc-energy} and \ref{lemma:H2}. We obtain the following: For any point $x_0\in S$ there are sequences of times $t_i\nearrow T_1$, radii $r_i\to 0$ and points $x_i\to x_0$ with energies on balls around $x_i$ of 
$$E(u(t_i),B_{2r_i}(x_i))\leq \eps_0 \quad \text{ and }\quad E(u(t_i),B_{r_i}(x_i))\geq c\eps_0, \quad c=c(M,g_1)>0$$
and with tension satisfying
$$r_i^2\int_{B_{2r_i}(x_i)}\abs{\tau(u(t_i))}^2 \to 0 \quad \text{ as } i\to \infty.$$
The rescaled maps $u_i(x)=u(\exp_{x_i}(r_ix),t_i)$, defined on larger and larger subsets of $\R^2$ are then bounded uniformly in $H^2$ and subconverge (weakly in $H^2$, 
strongly in $W^{1,p}$, $p<\infty$) to a non-constant harmonic map of finite energy that is defined on $\R^2\cup\{\infty\}\cong S^2$, called a harmonic sphere or bubble. The amount of energy that 
concentrates near $x_0$, and that is consequently lost as we pass to the limit $t\nearrow T_1$, is no less than $\eps^*=\eps^*(N)$, the minimal energy of such a non-constant harmonic map from 
$S^2$ to the target. 

Finally, as in \cite{Struwe85}, we (weakly) continue the flow past any such singular time by restarting from initial data $g(T_1)\in \M$ and $u(T_1)\in H^1(M,N)\cap C_{loc}^\infty(M\setminus S)$ as follows: let 
$u_{j,0}\in C^{\infty}(M,N)$ be a sequence of maps 
that converge to $u(T_1)$ in $H^1(M)$ as well as in $C^\infty_{loc}(M\setminus S)$ and let $(u_j,g_j)$ be the smooth solution of \eqref{1}  corresponding to initial data $(u_{j,0},g(T_1))$ that
exists at least on some interval $[T_1,T_1+\de_j)$ according to Lemma \ref{lemma:short-time}. We remark that the metrics $g_j$ are uniformly Lipschitz continuous and thus that the estimates derived above 
can be applied on $[T_1,T_1+\min(\de_j,\de_0))$ for a number $\de_0(g(T_1))>0$ independent of $j$.

We now choose $r>0$ such that 
$\sup_{x\in M} E(u_{j,0},B_r(x))<\eps_0/4$, which is possible due to the strong $H^1$-convergence of the initial maps. Then the local energy estimates of Lemma 
\ref{lemma:loc-energy} imply
that there is no concentration of energy and thus in particular no blow-up for any of the maps $u_j$, on a uniform interval $I=[T_1,T_1+cr^2]$, $c=c(g_1)>0$, in the sense that    
$$E(u_j(t),B_{r/2}(x))<\eps_0 \text{ for all } j\in \N,\, x\in M,\, t\in I.$$
According to Lemma \ref{lemma:Struwe} as well as remarks \ref{remark:Struwe} and \ref{remark:higher-reg}
we thus obtain uniform $C^{1,\alpha }$ estimates in time for the maps $u_j$ (and their spatial derivatives)
in every compact 
subset of $M\times[T_1,T_1+cr^2]\setminus (S\times\{T_1\})$. Away from the singular time, we furthermore get uniform bounds on all $C^k$ norms of $(u_j,g_j)$ in space-time. We conclude that 
a subsequence of $(u_j,g_j)$ converges smoothly on $M\times (T_1,T_1+cr^2]$ to a pair $(u,g)$ 
which solves \eqref{1} classically on 
$(T_1,T_1+cr^2]$ and weakly on $[T_1,T_1+cr^2]$. This solution achieves the initial data $(u(T_1),g(T_1))$ in the sense 
that for $t\searrow T_1$ the maps $u(t)$ converges to $u(T_1)$
weakly in $H^1(M)$ and 
smoothly away from the set $S$ while the metric component $g$ is Lipschitz-continuous across the singular time. 
Since the energy of the approximating solutions $(u_j,g_j)$ is no more than $E(u_{j,0},g(T_1))\to E(u(T_1),g(T_1))$, the extended weak solution $(u,g)$ has non-increasing energy also across the singular
time $T_1$. In particular the total number of all singular points $\cup_iS(T_i)\times\{T_i\}$ of such a solution is bounded by $\frac{E(u(0),g(0))}{\eps^*}$. After possibly repeating 
the above argument to analyse any further singularities, we thus obtain a weak solution satisfying the properties (i)-(iii) of Theorem \ref{thm:1} and existing for 
as long as the metrics do not degenerate in moduli space. 

\section{Uniqueness of weak solutions}\label{section:uniqueness}
We finally discuss the issue of uniqueness of weak solutions. We prove that the solution $(u,g)$ of \eqref{1} constructed in the previous section is uniquely determined by its initial data,  
not only among all solutions satisfying the properties of Theorem 
\ref{thm:1}, but in the natural class of all weak solutions with non-increasing energy. 
We remark that a further argument as carried out in \cite{Rupflin} actually gives uniqueness under the weaker assumption
that the total energy does not instantaneously increase by more than a certain quantum at any time. We also remark that it is necessary to impose restrictions
on the evolution of the total energy in view of the possibility of reverse bubbling, see \cite{Topping}. 

So let $(u_i,g_i)_{i=1,2}$ be two weak solutions of \eqref{1} defined on an interval $[0,T)$ that evolve from the same intial data 
$$(u_1,g_1)(0)=(u_0,g_0)=(u_2,g_2)(0)\in H^1(M,N)\times \M$$
and assume that the total energies $t\mapsto E(u_{1,2}(t),g_{1,2}(t))$ are non-increasing. 
Since
$$I:=\{t\in[0,T):\, (u_1,g_1)\equiv (u_2,g_2) \text{ on } [0,t]\}$$
is trivially closed in $[0,T)$, we need to prove that $I$ is also open. 

Given any $t_0\in I$ we recall that $g_i(t)\to g_i(t_0)$ in each $\M^s$ and thus certainly uniformly as $t\searrow t_0$. 
Combined with the fact that $u_i(t)\to u_i(t_0)$ strongly in $L^2$ and weakly in $H^1$, we thus obtain 
$$E(u_i(t_0),g_i(t_0))\geq \lim_{t\searrow t_0} E(u_i(t),g_i(t))=\lim_{t\searrow t_0} E(u_i(t),g_i(t_0))\geq E(u_i(t_0),g_i(t_0)),$$ 
where we used the assumption on the evolution of the energy in the first step. Thus $u_i(t)\to u_i(t_0)$ indeed strongly in $H^1(M,g_0)$, which implies in particular that local energies, say on balls,
converge as $t\searrow t_0$. 
Choosing a finite cover of balls $B_r(x_i)$, $i=1\ldots K$, of $(M,g_i(t_0))$ such that 
$E(u_i(t_0),g_i(t_0),B_{2r}(x_i))\leq \eps_0/2$, we may thus choose  
$\de>0$ so small that
$$E(u_i(t),g_i(t_0),B_{2r}(x_i))\leq \eps_0 \text{ for } t\in [t_0,t_0+\de], \, i=1,2.$$
Here we let $\eps_0>0$ be the constant of Lemma \ref{lemma:H2}.

It is now crucial to remark that on almost every time slice the functions 
$u_i(t)$ weakly solves an \textit{almost harmonic map equation}, that is an equation of the form $\tau_{g} v = f$ for a function $f\in L^2$ and a metric $g$, here of course $g=g_i(t)$ and $f=\pt u_i(t)$.
Since any weak solution of such an elliptic equation is contained in the Sobolev space $H^2$, see e.g. \cite{Rupflin}, Proposition 2.1, we may apply Lemma \ref{lemma:H2} 
on almost every time slice resulting in an estimate of   
$$\int_M\abs{\na u_i(t)}^4 +\abs{\na^2 u_i(t)}^2 dv_{g_0}\leq C(r)\cdot (1+\int_M\abs{\pt u(t)}^2 dv_{g(t)}) $$ for $t\in [t_0,t_0+\de)$ and $i=1,2$.
We can thus reduce the uniqueness statement in the general class of weak solutions with non-increasing energy 
to the following lemma whose 
analogue for the harmonic 
map flow was proven in \cite{Struwe85}

\begin{lemma} \label{lemma:unique}
Let $(u_1,g_1)$ and $(u_2,g_2)$ be weak solutions of \eqref{1} to the same initial data $(u_1,g_1)(0)=(u_2,g_2)(0)$ and suppose that  
\beq \label{ass:L4} \na u_i\in L^4(M\times[0,T)),\text{ and } \na^2u_i\in L^2(M\times [0,T)) \quad i=1,2.
\eeq
Then $(u_1,g_1)\equiv (u_2,g_2)$. 
\end{lemma}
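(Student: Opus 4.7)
The plan is a Gronwall argument applied to
$F(t) := \|w(t)\|_{L^2(M,\bar g)}^2 + \|h(t)\|_{H^s(M,\bar g)}^2$, where $w := u_1 - u_2$, $h := g_1 - g_2$, $\bar g$ is the common initial metric and $s>3$ is chosen so that Lemma \ref{lemma:L1} applies on a fixed $H^s$-neighbourhood of $\bar g$ containing both $g_i(t)$ for $t$ small; this last point can be arranged by shrinking $T$ if necessary, using that $g_i \in C^0([0,T),\M)$. Since $F(0)=0$, it suffices to establish an a.e.\ differential inequality $F'(t) + \tfrac12 \|\nabla w(t)\|_{L^2}^2 \le C(t)\,F(t)$ with $C \in L^1([0,T))$. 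The natural candidate $C(t) = C_0(1 + \sum_i \|\nabla u_i(t)\|_{L^4}^4 + \sum_i \|\nabla^2 u_i(t)\|_{L^2}^2)$ is integrable by \eqref{ass:L4} and the pointwise-in-time $H^2$ bound given by Lemma \ref{lemma:H2}, which is available on almost every time slice because $u_i(\cdot,t)$ weakly solves the almost harmonic map equation $\tau_{g_i(t)}(u_i(t)) = \pt u_i(t) \in L^2$.

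For the map component, I would test the distributional equation $\pt w = \tau_{g_1}(u_1) - \tau_{g_2}(u_2)$ with $w$ and integrate by parts in space, writing the right-hand side as $\Delta_{g_1} w + (\Delta_{g_1} - \Delta_{g_2}) u_2 + [A_{g_1}(u_1)(\nabla u_1, \nabla u_1) - A_{g_2}(u_2)(\nabla u_2, \nabla u_2)]$. This produces the good dissipative term $-\|\nabla w\|_{L^2(g_1)}^2$ together with error integrals whose integrands are pointwise controlled by combinations of $(|w|+|h|)|\nabla u|^2$, $|\nabla u_1 + \nabla u_2||\nabla w|$, $|h||\nabla^2 u_2|$, and $|\nabla h||\nabla u_2|$. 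These are estimated by Hölder's inequality (using the $L^4$ control on $\nabla u_i$, the $L^2$ control on $\nabla^2 u_i$ and the embeddings $H^1 \hookrightarrow L^4$ for $w$ and $H^s \hookrightarrow L^\infty$ for $h$, valid since $s>3$), followed by Young to absorb a small multiple of $\|\nabla w\|_{L^2}^2$ into the dissipation. For the metric component, \eqref{1b} gives $\dot h = \tfrac{\eta^2}{4}[(P_{g_1} - P_{g_2})(k(u_1,g_1)) + P_{g_2}(k(u_1,g_1) - k(u_2,g_2))]$. Lemma \ref{lemma:L1} bounds the first term in $H^s$ by $C\,\|h\|_{H^s}\,\|k(u_1,g_1)\|_{L^1} \le C E_0 \|h\|_{H^s}$, while writing $k(u,g) = 2 u^*G_N - 2 e(u,g) g$ as a quadratic function of $\nabla u$, smooth in $g$, yields the pointwise bound $|k(u_1,g_1) - k(u_2,g_2)| \le C|\nabla u_1 + \nabla u_2||\nabla w| + C|h||\nabla u_2|^2$, so that Cauchy-Schwarz gives $\|k(u_1,g_1) - k(u_2,g_2)\|_{L^1} \le C \|\nabla u\|_{L^2}\|\nabla w\|_{L^2} + C\|h\|_{L^\infty} E_0$. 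Pairing $\dot h$ with $h$ in $H^s$ and using Young produces $\ddt \|h\|_{H^s}^2 \le C \|h\|_{H^s}^2 + \tfrac14 \|\nabla w\|_{L^2}^2$, with the last term absorbed by the map dissipation.

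The main obstacle is precisely this closure: because $\nabla u_i$ is only in $L^2$ (no $L^\infty$ a priori), the difference $k(u_1,g_1) - k(u_2,g_2)$ is naturally controlled only in $L^1$ and not in $L^2$, so controlling $P_{g_1} - P_{g_2}$ in terms of the $L^1$ norm of the tested tensor, as provided by Lemma \ref{lemma:L1} and not by the weaker estimate of Proposition \ref{Prop:S}, is indispensable; the dissipation $\|\nabla w\|_{L^2}^2$ from the map estimate is just barely strong enough to absorb the bilinear coupling term arising on the metric side. Summing the two contributions and applying Gronwall to the a.e.\ differentiable function $F$ then forces $F \equiv 0$ on $[0,T)$, giving $w \equiv 0$ and $h \equiv 0$; combined with the reduction sketched immediately before the lemma statement (which shows it is enough to verify uniqueness on a short interval after each $t_0 \in I$), this yields that $I = [0,T)$, completing the uniqueness argument.
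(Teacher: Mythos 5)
Your overall structure matches the paper's: you test the difference equation with $w$, use the $L^1$-controlled projection estimate of Lemma \ref{lemma:L1} to close the metric side (you correctly identify this as the crucial ingredient), and reduce to a short interval via the open-closed argument. Organising this as a single Gronwall functional $F=\|w\|_{L^2}^2+\|h\|_{H^s}^2$ rather than the paper's two-step procedure (first estimating $d(g_1,g_2)$ by Gronwall, then inserting the result into the integral inequality for $w$ and obtaining a contraction on a short interval) is a valid reorganisation of the same argument.

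There is, however, a genuine gap in the map estimate: you invoke the embedding $H^1\hookrightarrow L^4$ to bound $\|w\|_{L^4}$, and this is not strong enough to close the Gronwall inequality. The problematic terms are $\|\nabla V\|_{L^4}^2\|w\|_{L^4}^2$ and $\|\nabla w\|_{L^2}\|\nabla V\|_{L^4}\|w\|_{L^4}$, where $|\nabla V|=\max\{|\nabla u_1|,|\nabla u_2|\}$. Via $\|w\|_{L^4}\le C\|w\|_{H^1}$ these produce, among others, a contribution of order $\|\nabla V(t)\|_{L^4}^2\,\|\nabla w(t)\|_{L^2}^2$; since $\|\nabla V(t)\|_{L^4}^2$ is only $L^2$ in time (not bounded), this cannot be absorbed by the dissipation $\|\nabla w\|_{L^2}^2$, and it cannot be fed into the Gronwall coefficient $C(t)$ either, because it multiplies $\|\nabla w\|^2$ rather than $F(t)$. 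What is needed is the two-dimensional Ladyzhenskaya / Gagliardo--Nirenberg interpolation
$$\|w\|_{L^4}^2\le C\,\|w\|_{L^2}\big(\|w\|_{L^2}+\|\nabla w\|_{L^2}\big),$$
which the paper deduces from $W^{1,1}\hookrightarrow L^2$ applied to $w^2$. The extra factor of $\|w\|_{L^2}$ is precisely what lets Young's inequality push the whole time-dependent coefficient onto $\|w\|_{L^2}^2\le F(t)$, leaving only a small time-independent multiple of $\|\nabla w\|_{L^2}^2$ to be absorbed by the dissipation. Once this is repaired, your argument closes as intended.
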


\begin{proof}[Proof of Lemma \ref{lemma:unique}]
Using an open-closed argument as above it is enough to prove that the solutions agree on a possibly smaller interval $[0,\de)$, which we can chose in particular such that the metrics 
$g_{1,2}$ are contained in an $H^s$ neighbourhood of $g_0=g_1(0)=g_2(0)$ for which Lemma \ref{lemma:L1} applies. Here $s$ can be chosen to be any fixed number $s>3$. 

Notation: For the following computations we denote by $\norm{\cdot}_{L^p}$ the $L^p(M,g_0)$ norm and by 
$d(\cdot,\cdot)$ the metric on $\M^s$ respectively by $\norm{\cdot}$ the $H^s$ norm on $T\M^s$. Furthermore, we use the short-hand notation of 
$\abs{\na V}:=\max\{\abs{\na u_1},\abs{\na u_2}\}$ which, by assumption, is a function in $L^4(M\times [0,T])$ with $L^2$ norm on time-slices bounded by the energy, 
$\norm{\na V(t)}^2_{L^2(M)}\leq C\cdot(E(u_1,g_1)+E(u_2,g_2))\leq C E(u_0,g_0)$.

Subtracting the equations \eqref{1a} for the map components 
$u_i$ we obtain that the difference $w=u_1-u_2$ satisfies
\beq 
\pt w-\Delta_{g_1}w=(\Delta_{g_1}-\Delta_{g_2})(u_2) +A_{g_1}(u_1)(\na u_1,\na u_1)-A_{g_2}(u_2)(\na u_2,\na u_2)\\
 \label{eq:uniqueness}
\eeq
where $A$ denotes the second fundamental form of the target $N\hookrightarrow \R^N$, 
$A_g(u)(\na u,\na u):=g^{ij}A(u)(\partial_i u, \partial_j u)$.

Following \cite{Struwe85} we multiply equation \eqref{eq:uniqueness} with $w$, integrate over the fixed surface $(M,g_0)$ and estimate the resulting terms using H\"older's inequality. This leads to 
\beqa
\frac12\ddt \norm{w}_{L^2}^2+\norm{\na w}_{L^2}^2\leq&\, C\cdot d(g_1,g_0)\cdot \norm{\na w}_{L^2}^ 2\\
&+C\cdot d(g_1,g_2)\cdot \big(\norm{\na w}_{L^2} +\norm{w}_{L^2} \cdot (1+\norm{\na V}^2_{L^4})\big)\\
&+ C\norm{\na w}_{L^2}\cdot \norm{\na V}_{L^4}\cdot\norm{w}_{L^4}+C\norm{\na V}_{L^4}^2\cdot \norm{w}_{L^4}^2\\
\leq& \,(\frac14 +C\cdot t)\norm{\na w}_{L^2}^2+ C\cdot d(g_1,g_2)^2\\
&+C(1+\norm{\na V}^2_{L^4})\cdot \norm{w}_{L^4}^2.
\label{est:uniqueness} \eeqa

Using the Sobolev embedding $W^{1,1}\hookrightarrow L^2$, we may furthermore estimate  
$$ \norm{w}_{L^4}^2=\norm{w^2}_{L^2}\leq C(\norm{\na(w^2)}_{L^1}+\norm{w^2}_{L^1})\leq C\cdot \norm{w}_{L^2}\cdot(\norm{w}_{L^2}+\norm{\na w}_{L^2})$$
so that the last term on the right hand side of \eqref{est:uniqueness} is bounded by 
$$C(1+\norm{\na V}^2_{L^4})\cdot \norm{w}_{L^4}^2\leq \frac18 \norm{\na w}_{L^2}^2+ C\psi(t)\cdot \norm{w}_{L^2}^2,$$
for $\psi(t)=(\norm{\na V(t)}_{L^4}^4+1)\in L^1([0,T])$.

In order to estimate the distance of the metric components $g_1$ and $g_2$ in terms of $w$ we recall that the evolution of the tensor $g_1-g_2$ is given by 
$$\ddt(g_1-g_2)=P_{g_1}(k(u_1,g_1))-P_{g_2}(k(u_2,g_2)),$$
$k(u,g)=2u^* G_N-2e(u,g)g$. We have a pointwise estimate of the difference of the involved tensors of 
$$\abs{k(u_1,g_1)-k(u_2,g_2)}\leq C\cdot d(g_1,g_2)\cdot \abs{\na V}^2+C\cdot \abs{w}\cdot \abs{\na V}^2+C\cdot \abs{\na w}\cdot \abs{\na V}.$$
Remark, that any $L^2$ estimate of this tensor would involve integrals of the form $\int\abs{\na V}^4 \abs{w}^2$ and $\int\abs{\na V}^2 \cdot \abs{\na w}^2$ which are 
\textit{not} controlled by the quantities of the left hand side of \eqref{est:uniqueness}. It thus crucial at this point that the 
improved bounds on $P_g$ given in Lemma \ref{lemma:L1} only ask for $L^1$ bounds on the involved tensors, allowing us to estimate 
\beqa\label{est:g1} 
\ddt d(g_1,g_2)& \leq C\cdot d(g_1,g_2)\cdot \norm{k(u_1,g_1)}_{L^1}+
C\cdot \norm{k(u_1,g_1)-k(u_2,g_2)}_{L^1}\\
&\leq C\cdot d(g_1,g_2) +C\cdot \norm{\na w}_{L^2}+C\cdot \psi(t)^{1/2}\norm{w}_{L^2}.\eeqa
Gronvall's lemma thus leads to an estimate of 
\beqa d(g_1,g_2)(t)^2&\leq C\cdot \bigg(\int_0^t\norm{\na w(s)}_{L^2(M)}\, ds\bigg)^2+C\bigg(\int_0^t\psi(s)^{1/2}\cdot\norm{w(s)}_{L^2(M)\,} ds \bigg)^2\\
& \leq t\cdot \int_0^t\int_M\abs{\na w}^2 +C\int_0^t\psi(s) ds \cdot \int_0^t\int_M w^2,\eeqa
which we insert into \eqref{est:uniqueness}. Integrating the resulting estimate over time, we find
$$\norm{w(t)}_{L^2}^2+\int_0^t\int\abs{\na w}^2\leq C\cdot \int_0^t\psi(s)ds \cdot \sup_{s\in[0,t]}\norm{w(s)}_{L^2}^2+Ct^2\int_0^t\int\abs{\na w}^2.$$
Since $\psi$ is integrable, we conclude that for all $t$ sufficiently small, say $t\in(0,t_0)$,
$$\int\abs{w(t)}^2\leq \frac12\sup_{s\in[0,t]}\int\abs{w(s)}^2.$$
Thus $w$ must vanish identically on $(0,t_0)$ so $u_1\equiv u_2$ and $g_1\equiv g_2$ as desired. 
\end{proof}
\renewcommand{\thesection}{\Alph{section}}
\setcounter{section}{0}
\numberwithin{equation}{section}
\section{Appendix}
\subsection{Solving the equation on a fixed Banach manifold} $ $\\
Let $(u_0,g_0)\in C^{2,\alpha}(M)\times \M$, $\alpha>0$ be given and let $s>3$ be a fixed number. Here we outline an iteration argument 
that can be used to obtain a solution $(u,g)\in C^{2,1,\alpha}([0,\de)\times M)\times C^1([0,\de), \M^s)$ of \eqref{1} for such initial data.

For $\de_0=\de_0(u_0,g_0,s)>0$ to be determined later, we extend $u_0$ to a constant in time map defined on $M\times [0,\de_0^2)$ and 
define iteratively for $i=1\ldots$
\begin{itemize}
 \item $g_i\in C^1([0,\de_{i-1}^2],\M^s)$ as the solution of $\ddt g_i=P_{g_i}(k(u_{i-1},g_i))$ with $g_i(0)=g_0$;
\item $u_i\in C^{2,1,\alpha}(M_{\de_i})$ as the solution of $\pt u_i=\tau_{g_i}(u_i)$, $u_i(0)=u_0$, defined and smooth on a maximal domain $M_{\de_i}:=M\times [0,\de_i^2)$, $\de_i\leq \de_{i-1}$.
\end{itemize}
Here we use the Lipschitz-continuity of the map $P_g$ on the Banach manifold $\M^s$ in the first step. We also remark that the equation for $u_i$ is a semilinear parabolic equation so  
standard methods, see e.g.~\cite{Lin-Wang} Theorem 5.2.1, lead to the existence of a solution $u_i$ of the above equation, defined on all of $[0,\de_{i-1}^2)$ unless there is a blow-up in the gradient 
at some time $\de_i^2$, $0<\de_i<\de_{i-1}$. 

We claim that for $\de_0$ initially chosen small enough, the iterates are all defined on $[0,\de_0^2)$ and satisfy 
\beqa \label{est:iteration}
\norm{u_{i+1}-u_{i}}_{C^{2,1,\alpha}(M_{\de_0})}^*&\leq \frac12\norm{u_i-u_{i-1}}_{C^{2,1,\alpha}(M_{\de_{0}})}^*\\
\norm{g_{i+1}-g_{i}}_{C^{1}([0,\de_0^2],\M^s)}^*&\leq C\cdot \de_0 \norm{u_{i+1}-u_{i}}_{C^{2,1,\alpha}(M_{\de_0})}^*,\\
\eeqa
thus converging to a classical solution $(u,g)\in C^{2,1,\alpha}([0,\de_0^2)\times M)\times C^1([0,\de_0^2),\M^s)$ in the limit $i\to \infty$.
Here, we use scaling invariant versions of the standard parabolic H\"older norms, defined by 
$\norm{u}_{C^{0,\alpha}(M_\delta)}^*=\norm{u}_{C^{0}(M_\delta)}+\delta^\alpha\seminorm{u}_{C^{\alpha}(M_\delta)} $ and more generally
$$\norm{u}_{C^{a, b,\alpha}(M_\delta)}^*=\sum_{\substack{k+2j\leq a\\ j\leq b}}\delta^{2j+k}\norm{\pt^j\nabla^k u}_{C^{\alpha}(M_\delta)}^*,\qquad M_\de=[0,\de^2)\times M.$$

We remark that the second estimate of \eqref{est:iteration} immediately follows from Propostion \ref{Prop:S} and the Gronvall lemma, compare with \eqref{est:g1}. 
To estimate $w_i=u_i-u_{i-1}$, we observe that 
\beq\label{eq:wi}
\pt w_i-L_iw_i=f_i\eeq
for the elliptic linear operator
$$L_i w:=\Delta_{g_i}w+A_{g_i}(u_{i-1})(\na u_{i-1},\na w)+\big(dA_{g_i}(u_{i-1})\big)(w)(\na u_{i-1},\na u_{i-1})$$
and a right hand side that is bounded in $C^{0,\alpha}(M_\de)$ for any $\de\leq \de_i$ by 
$$\de^2\norm{f_i}_{C^{0,\alpha}(M_\de)}^*\leq C \norm{g_i-g_{i-1}}_{C^1([0,\de^2],\M^s)}^*+C(\norm{w_i}_{C^{2,1,\alpha}(M_\de)}^*)^2$$
with a constant depending on a $C^{2,1,\alpha}$ bound on the previous iterate $u_{i-1}$ but not on $u_i$.
We then apply the following scaling invariant version of parabolic Schauder estimates
\begin{Prop}\label{Prop:parab}
Let $M$ be a closed manifold and let $\lambda>0$, $A<\infty$ be fixed. Then there exists a number $C<\infty$ such that the following holds true.
Let $L$ be any second order differential operator on  $M_\delta$, $\delta\in (0,1)$ any number, that is given in local coordinate charts as
$Lu=\dxi (a^{ij}\dxj u)+b^i \dxi u+cu$ with 
$$a^{ij}(x,t)\xi_i\xi_j\geq \lambda\abs{\xi}^2\, \text{ for all }\xi\in \R^m \text{ and } (x,t)\in M_\delta$$
$$\norm{a^{ij}}_{C^{1,0,\alpha}(M_\delta)}^*+\delta\norm{b^i}_{C^{\alpha}(M_\delta)}^*+\delta^2\norm{c}_{C^{\alpha}(M_\delta)}^*\leq A.$$
Then the solution $u\in C^{2,1,\alpha}(M_\delta)$ of 
$
\pt w-Lw=f\in C^\alpha(M_\delta), w(0)=0
$ satisfies 
\beqs \norm{w}_{C^{2,1,\alpha}(M_\delta)}^*\leq C\delta^2 \norm{f}_{C^{\alpha}(M_\delta)}^*.
\eeqs
\end{Prop}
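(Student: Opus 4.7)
The plan is to reduce the statement to the standard parabolic Schauder estimate on a cylinder of fixed temporal extent, by exploiting the parabolic rescaling $(y,s) = (x/\delta, t/\delta^2)$, under which both the operator class and the scaling-invariant norms $\norm{\cdot}^*$ are natural. First cover $M$ by finitely many coordinate charts with a subordinate partition of unity $\{\chi_k\}$; in each chart, $w$ pulled back to a Euclidean ball satisfies an equation of the same form, and it suffices to estimate $\chi_k w$, which solves a similar equation with right-hand side equal to $\chi_k f$ plus commutator terms involving $w$, $\nabla w$, and derivatives of $\chi_k$ supported in the chart overlaps.

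The heart of the argument is the rescaling itself: setting $\tilde w(y,s) := w(\delta y, \delta^2 s)$ and $\tilde f(y,s) := \delta^2 f(\delta y, \delta^2 s)$, a direct computation shows that $\tilde w$ satisfies
\beqs
\pt \tilde w = \dxi\bigl(\tilde a^{ij}\dxj \tilde w\bigr) + \delta\, \tilde b^i \dxi \tilde w + \delta^2 \tilde c\, \tilde w + \tilde f, \qquad \tilde w(\cdot,0) = 0,
\eeqs
on a Euclidean cylinder of unit temporal length, with $\tilde a^{ij}(y,s) := a^{ij}(\delta y, \delta^2 s)$ retaining the same ellipticity constant $\lambda$. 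The coefficient hypothesis is precisely arranged so that the rescaled coefficients $\tilde a^{ij}$, $\delta\, \tilde b^i$, $\delta^2 \tilde c$ are bounded by $A$ in the \emph{unweighted} parabolic H\"older norms on the unit cylinder, uniformly in $\delta \in (0,1]$. Applying the classical Schauder estimate for zero initial data (see e.g.~\cite{Lieberman}, Chapter IV) then yields
\beqs
\norm{\tilde w}_{C^{2,1,\alpha}} \leq C\, \norm{\tilde f}_{C^\alpha}
\eeqs
with $C = C(\lambda, A, \alpha)$. Translating each side back via the inverse change of variables reproduces exactly the weighted norms $\norm{\cdot}^*$ and gives the desired bound $\norm{w}_{C^{2,1,\alpha}(M_\delta)}^* \leq C \delta^2 \norm{f}_{C^\alpha(M_\delta)}^*$.

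The main technical obstacle is patching the chartwise estimates together on the compact manifold $M$: the commutator terms produced by differentiating the cutoffs $\chi_k$ contribute lower-order quantities of the form $C\bigl(\norm{\nabla w}_{C^\alpha} + \norm{w}_{C^\alpha}\bigr)$ on overlap regions, which must be absorbed back into the left-hand side. This can be handled by standard interpolation between $C^{2,1,\alpha}$ and $C^0$ together with the zero initial condition $w(\cdot,0) = 0$, which (via $\norm{w}_{C^0} \leq \delta^2 \norm{\pt w}_{C^0}$) yields the extra $\delta$-factors needed for absorption when $\delta$ is small enough that each rescaled chart contains a Euclidean ball of radius at least one. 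For $\delta$ bounded below by a fixed constant the proposition reduces directly to standard parabolic Schauder on the compact cylinder $M \times [0,\delta^2]$, so this small-$\delta$ regime is the only one where the scaling is substantive; the essential inputs beyond the classical estimate are thus just the observation that the norms $\norm{\cdot}^*$ are tailored to be scale-invariant under parabolic rescaling.
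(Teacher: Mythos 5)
The rescaling idea in your first two paragraphs is correct and is exactly how the paper reads the hypotheses on $a^{ij}, b^i, c$: the weighted norms are designed so that after $(y,s)=(x/\delta,t/\delta^2)$ the equation becomes a uniformly-elliptic parabolic equation with coefficients uniformly bounded in the \emph{unweighted} norms on a cylinder of unit temporal height. The paper likewise records as a first step (its displayed estimate \eqref{est:parabolic}) that standard Schauder plus scaling yield
$\norm{w}_{C^{2,1,\alpha}(M_\delta)}^*\leq C\delta^2\norm{f}_{C^{\alpha}(M_\delta)}^*+C\norm{w}_{C^0(M_\delta)}$.

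Where your argument breaks down is the claim that the residual $\norm{w}_{C^0}$ term (equivalently the lower-order commutator terms from the cutoffs) can be absorbed by ``interpolation together with $w(\cdot,0)=0$, via $\norm{w}_{C^0}\leq\delta^2\norm{\pt w}_{C^0}$.'' Unwind the weights: the $\pt w$ term enters $\norm{w}_{C^{2,1,\alpha}}^*$ with a factor $\delta^2$, so $\delta^2\norm{\pt w}_{C^0}\leq\norm{w}_{C^{2,1,\alpha}}^*$ is a \emph{trivial} inequality — it gives $\norm{w}_{C^0}\leq\norm{w}^*$ with no gain, and nothing can be absorbed. Similarly, if you try to close via $\pt w = Lw+f$, the principal part contributes $\norm{\nabla^2 w}_{C^0}\lesssim\delta^{-2}\norm{w}^*$, which cancels the prefactor $\delta^2$ and leaves you again with $\norm{w}_{C^0}\lesssim\norm{w}^*$. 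So a separate estimate on $\norm{w}_{C^0}$ is genuinely needed. The paper supplies it by an Ehrling-type interpolation down to $\sup_x\delta^{-1}\norm{w(t)}_{L^2(B_\delta(x))}$ followed by a local energy identity for quantities $\int\varphi^2[a^{ij}\partial_iw\partial_jw+\delta^{-2}w^2]$ with $\varphi$ supported at scale $\delta$, producing the bound by $\delta^2\norm{f}_{L^\infty}$. (An alternative fix consistent with your rescaling picture: note that the fixed partition $\chi_k$ rescales to cutoffs with $\abs{\nabla\tilde\chi_k}\lesssim\delta$, $\abs{\nabla^2\tilde\chi_k}\lesssim\delta^2$, so the commutator terms on the rescaled cylinder come pre-multiplied by $\delta$ and can be absorbed for $\delta$ small; you would then still need a parabolic maximum principle or energy estimate for the remaining $\norm{w}_{C^0}$ term, but the needed smallness is available since $\norm{c}_{C^0}\leq\delta^{-2}A$ gives $\norm{w}_{C^0}\leq e^A\delta^2\norm{f}_{C^0}$.) As written, though, your justification for absorption does not work and the proof has a gap.
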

In terms of giving a proof of this result, we remark that standard Schauder estimates combined with a scaling argument give in a first step an estimate of the form
\beq \norm{w}_{C^{2,1,\alpha}(M_\delta)}^*\leq C\delta^2 \norm{f}_{C^{\alpha}(M_\delta)}^*+C\norm{w}_{C^{0}(M_\delta)}\label{est:parabolic}\eeq
for constants $C$ independent of $\de$. To retain this scaling invariance, we can then use the Ehrling-lemma and a further rescaling argument to estimate the second term of \eqref{est:parabolic} by 
\beqs
\norm{w(t)}_{C^0(M)}\leq \eps\cdot \norm{w}_{C^{\alpha}(M_\delta)}^*+C_\eps \sup_{x\in M} \delta^{-1}\norm{w(t)}_{L^2(B_\delta(x))}
\eeqs
on every time slice. Finally considering the evolution of local energy quantities of the form  $\int\varphi^2\big[a^{ij}\partial_{x_i}w(t)\partial_{x_j}w(t)+\delta^{-2}w(t)^2\big]\,dx$,
$\varphi$ a cut-off function supported on balls of radius $2\de$, gives that the last term in this estimate is bounded by a fixed (independent of $\de$) multiple of $\de^2\norm{f}_{L^\infty}$, completing the proof of Propostion \ref{Prop:parab}.

Turning back to the equation \eqref{eq:wi} satisfied by $w_{i}$, this Schauder-estimate allows us to conclude that for any $\de<\de_i$ 
$$\norm{w_i}_{C^{2,1,\alpha}(M_\de)}^*\leq C\de\norm{w_{i-1}}_{C^{2,1,\alpha}(M_\de)}^*+C\big(\norm{w_i}_{C^{2,1,\alpha}(M_\de)}^*\big)^2.$$
Since $w_i(0)=0$, the norm $\norm{w_i}_{C^{2,1,\alpha}(M_\de)}^*$ is small at least for $\de$ small (a priori depending on $i$). We conclude that the first estimate of 
\eqref{est:parabolic} holds true, initially for $\de$ small and then, by a continuity argument, indeed for as long as the solution exists (provided $\de_0=\de_0(u_0,g_0)$ was initially chosen small enough). 
But this very estimate prevents a blow-up before time
$\de_{i-1}$, so that $\de_i=\de_{i-1}=..=\de_0$, completing the proof.

\subsection{Proof of Lemma \ref{lemma:pull-back}}
We finally provide a possible proof of the fact that any metric in $\M^s$ can be written in the form $f^*\bar g$ with $f\in \D^{s+1}$ and $\bar g\in \M$.

Let $s>3$ and let $\Omega\subset \M^s$ be the subset of all metrics which can be written in the form $g=f^*\bar g$ for a smooth metric $\bar g\in \M$ and a diffeomorphism $f$ of class $H^{s+1}$. 

We first prove that $\Omega$ is an open subset of $\M^s$ using the slice theorem. Given any metric of the form $g_0=f_0^*\tilde g_0$, $f_0\in \D^{s+1}$ and $\tilde g_0\in \M$ we apply the slice-theorem \ref{thm:Tromba} to the smooth 
metric $\tilde g_0$ resulting in an $\M^s$-neighbourhood $\tilde W$ of $\tilde g_0$, consisting only of metrics of the form $f^*g_S$, 
$g_S$ an element of a slice $S$ around $\tilde g_0$ and thus in particular smooth.
The pull-back $W=f_0^* \tilde W$ is then an $\M^s$-neighbourhood of the original metric $g_0\in\M^s$, containing only metrics of the form 
$g=f_0^* (f^*g_S)=(f\circ f_0)^* g_S$, $g_S\in S\subset \M$ and $f\circ f_0\in \D^{s+1}$. So indeed $W\subset \Omega$ and $\Omega$ is open.

To see that $\Omega$ is also closed, we use a result due to Ebin and Palais which says that the action of $\D^{s+1}$ on $\M^s$ is proper, see e.g.~Theorem 2.3.1 in \cite{Tromba}; 
in practice this means that if we are given a sequence of diffeomorphisms $f_i\in \D^{s+1}$ and a convergent sequence of metrics $g_i\to g$ in $\M^s$ 
then knowing that $f_i^*g_i\to \bar g\in \M^s$ converges (in $H^s$ topology) is enough to conclude that also (a subsequence of) the diffeomorphisms $f_i$ converges, 
$f_i\to f$ in $\D^{s+1}$.

Let now $g\in \M^s$ be such that there are diffeomorphisms $f_i\in \D^{s+1}$ and metrics $g_i\in\M$ with $f_i^*g_i\to g$ (in $\M^s$). This convergence implies in particular that the 
length $\ell(g_i)=\ell(f_i^*g_i)$ of the shortest closed geodesic of $(M,g_i)$ is bounded away from zero. Thus
the Mumford compactness theorem implies that after pulling back $g_i$ by a \textit{smooth} family of diffeomorphisms $\tilde f_i$, a subsequence of $g_i$ converges smoothly 
$$(\tilde f_i)^*g_i=(f_i^{-1}\circ \tilde f_i)^*(f_i^*g_i)\to \bar g\in \M.$$
We conclude that the diffeomorphisms $f_i^{-1}\circ \tilde f_i$ converge to another diffeomorphism $f\in \D^{s+1}$ and thus that 
$g=(f^{-1})^*\bar g\in \Om$.

{\sc Max-Planck-Intitut f\"ur Gravitationsphysik, Am M\"uhlenberg 1, \\14476 Potsdam, Germany}
	
\end{document}